\documentclass[reqno,12pt]{amsart} 
\usepackage[colorlinks=true,linkcolor=blue,citecolor=red]{hyperref} 
\usepackage{esint} 
\usepackage{amssymb} 
\usepackage{enumitem} 
\usepackage{mathrsfs} 

\setenumerate{label=(\alph*)} 
\setenumerate[2]{label=(\Alph*)} 

\allowdisplaybreaks[1] 

\newtheorem{theorem}{Theorem}[section]
\newtheorem{proposition}[theorem]{Proposition}
\newtheorem{lemma}[theorem]{Lemma}

\theoremstyle{definition}
\newtheorem{definition}[theorem]{Definition}
\newtheorem{example}[theorem]{Example}
\theoremstyle{remark}
\newtheorem{remark}[theorem]{Remark}

\numberwithin{equation}{section} 

\def\diam{\operatorname{diam}}
\def\div{\operatorname{div}}
\def\dist{\operatorname{dist}}

\def\BMO{\operatorname{BMO}}

\def\inf{\operatornamewithlimits{inf\vphantom{p}}}
\def\length{\operatorname{length}}

\newcommand{\norm}[1]{\left\lVert#1\right\rVert}

\calclayout

\begin{document}

\title[Characterizations of Lyapunov domains]
{Characterizations of Lyapunov domains in terms of Riesz transforms and the Plemelj-Privalov theorem}

\author{Juan Jos\'{e} Mar\'{i}n}
\address{Juan Jos\'{e} Mar\'{i}n
\\
Departamento de Matem{\'a}tica Aplicada y Estad{\'i}stica
\\
Universidad Polit{\'e}cnica de Cartagena
\\
Cartagena, Spain.} \email{juanjose.marin@upct.es}

\author{Jos\'e Mar{\'\i}a Martell}
\address{Jos\'e Mar{\'\i}a Martell
\\
Instituto de Ciencias Matem\'aticas CSIC-UAM-UC3M-UCM
\\
Consejo Superior de Investigaciones Cient{\'\i}ficas
\\
C/ Nicol\'as Cabrera, 13-15
\\
E-28049 Madrid, Spain} \email{chema.martell@icmat.es}

\author{Dorina Mitrea}
\address{Dorina Mitrea
\\
Department of Mathematics
\\
Baylor University 
\\
Waco, TX 76706, USA} \email{Dorina\_Mitrea@baylor.edu}

\author{Marius Mitrea}
\address{Marius Mitrea
\\
Department of Mathematics
\\
Baylor University 
\\
Waco, TX 76706, USA} \email{Marius\_Mitrea@baylor.edu}

\date{\today}

\subjclass[2020]{Primary: 26A16, 28A75, 31B10, 42B20, 42B35, 42B37. 
Secondary: 15A66}

\keywords{Generalized H\"older space, Lyapunov domain, singular integral operator,
modulus of continuity, Riesz transform}

\begin{abstract}
We prove several characterizations of $\mathscr{C}^{1,\omega}$-domains (aka Lyapunov domains), where 
$\omega$ is a growth function satisfying natural assumptions. For example, given an Ahlfors regular 
domain $\Omega\subseteq{\mathbb{R}}^n$, we show that the modulus of continuity of the geometric measure 
theoretic outward unit normal $\nu$ to $\Omega$ is dominated by (a multiple of) $\omega$ if and only if 
the action of each Riesz transform $R_j$ associated with $\partial\Omega$ on the constant function $1$ 
has a modulus of continuity dominated by (a multiple of) $\omega$. The proof of this result requires that we 
establish a higher-dimensional generalization of the classical Plemelj-Privalov theorem, identifying a large class 
of singular integral operators that are bounded on generalized H\"older spaces. This class includes the 
Cauchy–Clifford operator and the harmonic double layer operator, among others.
\end{abstract}

\maketitle
\tableofcontents

\section{Introduction and statement of main result}\label{sec:intro}

The principal aim of this paper is to characterize Lyapunov $\mathscr{C}^{1,\omega}$-domains in 
${\mathbb{R}}^n$, $n\geq 2$. One way to think of such a domain $\Omega\subset{\mathbb{R}}^n$ is as an open 
set of locally finite perimeter whose geometric measure theoretic outward unit normal $\nu$, after possibly 
being redefined on a set of $\sigma$-measure zero, belongs to ${\mathscr{C}}^{\omega}(\partial\Omega)$.
Here, ${\mathscr{C}}^{\omega}(\partial\Omega)$ is a generalized H\"{o}lder space, quantifying 
continuity in terms of the modulus, or ``growth'' function, $\omega$ (cf. Definition~\ref{tFF}), and 
$\sigma:={\mathcal{H}}^{n-1}\lfloor\partial\Omega$ is the ``surface measure'' on $\partial\Omega$
(with ${\mathcal{H}}^{n-1}$ denoting the $(n-1)$-dimensional Hausdorff measure in ${\mathbb{R}}^{n}$). 
The above class of domains may be equivalently described as the 
collection of all open subsets of ${\mathbb{R}}^n$ which locally coincide (up to a rigid transformation 
of the space) with the upper-graph of a real-valued continuously differentiable function defined in 
${\mathbb{R}}^{n-1}$ whose first-order partial derivatives belong to ${\mathscr{C}}^{\omega}({\mathbb{R}}^{n-1})$;
see Remark~\ref{YrafV.b}.

The characterizations of the class of Lyapunov domains we presently seek are in terms of the 
boundedness properties of certain classes of singular integral operators acting on generalized H\"{o}lder spaces.
The most prominent examples of such singular integral operators are offered by the Riesz transforms.
On the boundary of an Ahlfors regular domain $\Omega\subseteq{\mathbb{R}}^n$ (cf. Definition~\ref{utRF-he42}) 
with compact boundary there are two versions we wish to consider. First, for each $j\in\{1,\dots,n\}$, we define 
the action of the $j$-th principal-value Riesz transform on $\partial\Omega$ on each function $f\in L^1(\partial\Omega,\sigma)$ as
\begin{equation}\label{Rieszpv}
(R_jf)(x):=\lim_{\varepsilon\to 0^{+}}\frac{1}{\varpi_{n-1}}\int\limits_{\substack{y\in\partial\Omega\\ |x-y|>\varepsilon}}
\frac{x_j-y_j}{|x-y|^n}f(y)\,d\sigma(y)\,\,\text{ for $\sigma$-a.e. }\,\,x\in\partial\Omega,
\end{equation}
where $\varpi_{n-1}$ stands for the area of the unit sphere $S^{n-1}$ in ${\mathbb{R}}^n$ (for the fact that the above limit
exists see, e.g., \cite[Proposition~5.6.7, p.\,378]{GHA.I}, whose proof makes use of results from \cite{Mas} and \cite{Mattila}).

Second, for each $j\in\{1,\dots,n\}$, we define the $j$-th distributional Riesz transform on $\partial\Omega$ as the operator  
\begin{equation}\label{Rjdistributional1}
\begin{array}{c}
\text{$R_j:{\mathscr{C}}^{\omega}(\partial\Omega)\longrightarrow\big({\mathscr{C}}^{\omega}(\partial\Omega)\big)^{*}$
satisfying, for every $f,g\in{\mathscr{C}}^{\omega}(\partial\Omega)$,}
\\[8pt]
\displaystyle
\langle R_j f,g\rangle=\frac{1}{2\varpi_{n-1}}\int_{\partial\Omega}
\int_{\partial\Omega}\frac{x_j-y_j}{|x-y|^n}\big(f(y)g(x)-f(x)g(y)\big)\,d\sigma(y)\,d\sigma(x),
\end{array}
\end{equation}
where $\langle\cdot,\cdot\rangle$ denotes the natural duality pairing between 
$\big({\mathscr{C}}^{\omega}(\partial\Omega)\big)^{*}$ and ${\mathscr{C}}^{\omega}(\partial\Omega)$. 
From the $T(1)$ theorem for spaces of homogeneous type (cf., e.g., \cite[Theorem~13, p.\,94]{Chr90}) we know that for each 
$j\in\{1,\dots,n\}$ the operator $R_j$ from \eqref{Rjdistributional1} extends to a bounded linear mapping on 
$L^2(\partial\Omega,\sigma)$ if and only if $R_j 1$, again considered in the sense of \eqref{Rjdistributional1}, 
belongs to $\BMO(\partial\Omega,\sigma)$, the John-Nirenberg space 
of functions with bounded mean oscillation on $\partial\Omega$ (with respect to the measure $\sigma$). 
Moreover, said membership is actually equivalent to $\partial\Omega$ being uniformly rectifiable 
(cf. \cite[p.\,239]{NazTolVol14}). In turn, the latter geometric property ensures that the principal-value Riesz transforms
on $\partial\Omega$ (cf. \eqref{Rieszpv}) induce well-defined and bounded operators on $L^2(\partial\Omega,\sigma)$
(cf. \cite{DavSem93}, \cite[Theorem~5.10.3, pp.\,458-459]{GHA.I}). Ultimately this goes to show that the aforementioned extension 
of $R_j$ from \eqref{Rjdistributional1} to a bounded mapping on $L^2(\partial\Omega,\sigma)$ is given by the principal-value 
integral operator acting on each $f\in L^2(\partial\Omega,\sigma)$ according to \eqref{Rieszpv}.

Our main result in this regard is the following theorem. For all relevant definitions, the reader is referred to 
Sections~\ref{sec:growth}, \ref{sec:holder}, and \ref{sec:gmt}.

\begin{theorem}\label{theor:uewmp}
Let $n\in{\mathbb{N}}$, $n\geq 2$ and suppose $\Omega\subset\mathbb{R}^n$ is an Ahlfors regular domain whose boundary is compact. 
Abbreviate $\sigma:={\mathcal{H}}^{n-1}\lfloor\partial\Omega$ 
and denote by $\nu$ the geometric measure theoretic outward unit normal to $\Omega$. Also, define 
$\Omega_{+}:=\Omega$ and $\Omega_{-}:={\mathbb{R}}^n\setminus\overline{\Omega}$.
Finally, let $\omega:\big(0,\diam(\partial\Omega)\big)\to(0,\infty)$ be a bounded, non-decreasing function,  
whose limit at the origin vanishes, and satisfying
\begin{equation}\label{54332e90}
\sup_{0<t<\diam(\partial\Omega)}\Bigg\{\frac{1}{\omega(t)}\Big(
\int_0^{t}\omega(s)\frac{ds}{s}+t\,\int_t^{\diam(\partial\Omega)}\frac{\omega(s)}{s}\,\frac{ds}{s}\Big)\Bigg\}<+\infty.
\end{equation} 

Then the following statements are equivalent:

\begin{list}{$(\theenumi)$}{\usecounter{enumi}\leftmargin=.8cm
\labelwidth=.8cm\itemsep=0.2cm\topsep=.1cm
\renewcommand{\theenumi}{\alph{enumi}}}
\item After possibly being altered on a set of $\sigma$-measure zero, the outward unit normal 
$\nu$ to $\Omega$ belongs to the generalized H\"older space ${\mathscr{C}}^{\omega}(\partial\Omega)$
{\rm (}i.e. the set $\Omega$ is a $\mathscr{C}^{1,\omega}$-domain; cf. Remark~\ref{YrafV.b}{\rm )}.

\vskip 0.08in
\item The Riesz transforms on $\partial\Omega$ defined in \eqref{Rjdistributional1} satisfy 
\begin{equation}\label{mainthmeq3}
R_j 1\in{\mathscr{C}}^{\omega}(\partial\Omega)\,\,\text{ for each }\,\,j\in\{1,\dots,n\}.
\end{equation}

\item The set $\Omega$ is a {\rm UR} domain {\rm (}in the sense of Definition~\ref{tFCa97}{\rm )} and the principal-value 
Riesz transforms \eqref{Rieszpv} satisfy 
\begin{equation}\label{mainthmeq3-pv}
R_j 1\in{\mathscr{C}}^{\omega}(\partial\Omega)\,\,\text{ for each }\,\,j\in\{1,\dots,n\}.
\end{equation}

\item The set $\Omega$ is a {\rm UR} domain 
and given any odd homogenous polynomial $P$ of degree $\ell\geq 1$ in $\mathbb{R}^n$ the singular 
integral operator acting on each function $f\in{\mathscr{C}}^{\omega}(\partial\Omega)$ according to 
\begin{equation}\label{trdcc-765}
(Tf)(x):=\lim_{\varepsilon\to 0^{+}}\int\limits_{\substack{y\in\partial\Omega\\ |x-y|>\varepsilon}}
\frac{P(x-y)}{|x-y|^{n-1+\ell}}\,f(y)\,d\sigma(y)
\,\,\text{ for $\sigma$-a.e. }\,\,x\in\partial\Omega
\end{equation}
is well-defined and maps the generalized H\"older space ${\mathscr{C}}^{\omega}(\partial\Omega)$ boundedly into itself.

\item The set $\Omega$ is a {\rm UR} domain, and the boundary-to-domain versions of the Riesz 
transforms defined for each $j\in\{1,\dots,n\}$ and each $f\in L^1(\partial\Omega,\sigma)$ as 
\begin{equation}\label{trfad-ytR}
\big({\mathscr{R}}_j^{\pm}f\big)(x):=\frac{1}{\varpi_{n-1}}
\int_{\partial\Omega}\frac{x_j-y_j}{|x-y|^{n}}\,f(y)\,d\sigma(y),\qquad\forall\,x\in\Omega_{\pm},
\end{equation}
satisfy 
\begin{equation}\label{mainthmRinCw}
{\mathscr{R}}_j^{\pm} 1\in{\mathscr{C}}^{\omega}(\Omega_{\pm})\,\,\text{ for each }\,\,j\in\{1,\dots,n\}.
\end{equation}

\item The set $\Omega$ is a {\rm UR} domain, and given any odd homogenous polynomial $P$ of degree $\ell\ge 1$ 
in $\mathbb{R}^n$, the integral operators acting on each function $f\in\mathscr{C}^{\omega}(\partial\Omega)$
according to 
\begin{equation}\label{defT}
\mathbb{T}_{\pm}f(x):=\int_{\partial\Omega}\frac{P(x-y)}{|x-y|^{n-1+\ell}}\,f(y)\,d\sigma(y),
\qquad\forall\,x\in\Omega_{\pm},
\end{equation}
map the generalized H\"older space 
${\mathscr{C}}^{\omega}(\partial\Omega)$ continuously into ${\mathscr{C}}^{\omega}(\Omega_{\pm})$.
\end{list}

In addition, if $\Omega$ is a ${\mathscr{C}}^{1,\omega}$-domain, there exists a constant $C\in(0,\infty)$, depending only on 
$n$, $\omega$, and $\Omega$, with the property that the operators in \eqref{defT} and \eqref{trdcc-765} satisfy
\begin{equation}\label{mainthmeq1}
\|{\mathbb{T}}_{\pm}f\|_{\mathscr{C}^{\omega}(\Omega_{\pm})}\leq C^\ell\,2^{\ell^2} 
\|P\|_{L^2(S^{n-1},{\mathcal{H}}^{n-1})}\|f\|_{{\mathscr{C}}^{\omega}(\partial\Omega)},
\qquad\forall\,f\in{\mathscr{C}}^{\omega}(\partial\Omega),
\end{equation}
and
\begin{equation}\label{mainthmeq2}
\|Tf\|_{{\mathscr{C}}^{\omega}(\partial\Omega)}\leq
C^\ell\,2^{\ell^2}\|P\|_{L^2(S^{n-1},{\mathcal{H}}^{n-1})}\|f\|_{{\mathscr{C}}^{\omega}(\partial\Omega)},
\qquad\forall\,f\in{\mathscr{C}}^{\omega}(\partial\Omega).
\end{equation}
\end{theorem}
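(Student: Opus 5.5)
The plan is to prove the chain of implications
\[
(a)\Rightarrow(d)\Rightarrow(c)\Rightarrow(b)\Rightarrow(a),\qquad (a)\Rightarrow(f)\Rightarrow(e)\Rightarrow(a),
\]
with the estimates \eqref{mainthmeq1}--\eqref{mainthmeq2} falling out of the first step.

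\textbf{Step 1: $(a)\Rightarrow(d)$ and $(a)\Rightarrow(f)$.} This is the Plemelj--Privalov type result. Assume $\Omega$ is a $\mathscr{C}^{1,\omega}$-domain; then it is UR. Expand $P$ in spherical harmonics. Its odd part of degree $\ell$ decomposes into harmonics of degrees $\ell,\ell-2,\dots$, with coefficients controlled by $\|P\|_{L^2(S^{n-1})}$ and a factor $C^\ell 2^{\ell^2}$. Each kernel $Y_k(x)/|x|^{n-1+k}$ is, up to constants, a derivative $\partial^\alpha$ of the fundamental solution. This reduces matters to kernels $K(x-y)=\partial_j E(x-y)$ composed with higher derivatives.

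The core estimate is for $f\in\mathscr{C}^\omega$. Write
\[
Tf(x)=\int K(x-y)\big(f(y)-f(x)\big)\,d\sigma(y)+f(x)\,T1(x),
\]
and split the first integral into $|x-y|<2|x-x'|$ and the complement. Near $x$, use the size bound $\omega(|x-y|)/|x-y|^{n-1}$; integrating against the Ahlfors regular measure $\sigma$ gives $\int_0^t\omega(s)\,ds/s$. Away from $x$, use the kernel smoothness $|x-x'|/|x-y|^n$, which gives $t\int_t^{\diam}\omega(s)\,ds/s^2$. Both are $\lesssim\omega(t)$ by \eqref{54332e90}. The remaining term is $f(x)T1(x)$ together with the cross term $(f(x)-f(x'))\int_{|x-y|>2t}K(x'-y)\,d\sigma(y)$. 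These need $T1\in\mathscr{C}^\omega$ and uniform boundedness of truncated $T1$.

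For $T1$, use the divergence theorem: for kernels that are gradients of homogeneous functions, $\int_{\partial\Omega\setminus B(x,\varepsilon)}K(x-y)\,d\sigma(y)$ equals a boundary integral involving $\nu(y)-\nu(x)$ plus explicit terms on spheres. Since $|\nu(y)-\nu(x)|\le C\omega(|x-y|)$, the same splitting gives $T1\in\mathscr{C}^\omega$. Concretely, write $P(x-y)/|x-y|^{n-1+\ell}$ as $\sum_j\nu_j(y)\,\partial_{y_j}$ of a kernel of one degree lower, modulo tangential derivatives. The domain versions (f) follow similarly, by comparing $x\in\Omega_\pm$ with its nearest boundary point and handling the two regimes $|x-x'|$ versus $\dist(x,\partial\Omega)$.

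\textbf{Step 2: the easy arrows.} $(d)\Rightarrow(c)$ is immediate by taking $P(x)=x_j/\varpi_{n-1}$. For $(c)\Rightarrow(b)$: for UR domains the principal-value operator coincides with the $L^2$ extension of the distributional operator, by the discussion after \eqref{Rjdistributional1}. Hence the two notions of $R_j1$ agree. $(f)\Rightarrow(e)$ is immediate with $f=1$.

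\textbf{Step 3: $(b)\Rightarrow(a)$ and $(e)\Rightarrow(a)$, the main obstacle.} First, (b) gives $R_j1\in\BMO$, so $\partial\Omega$ is UR by the Nazarov--Tolsa--Volberg result cited above. Then use jump relations for the boundary-to-domain Riesz transforms: nontangentially, $\mathscr{R}^\pm_j1\to\pm\tfrac12\nu_j+R_j1$. One also has the identity $\sum_j\mathscr{R}_j^\pm 1\cdot(\ldots)$ via the Clifford/Cauchy structure, i.e. $\mathscr{R}^+1-\mathscr{R}^-1=\nu$ on the boundary.

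So for $(e)\Rightarrow(a)$: $\nu_j$ is the difference of the traces of two functions in $\mathscr{C}^\omega(\Omega_\pm)$, which lie in $\mathscr{C}^\omega(\partial\Omega)$. For $(b)\Rightarrow(a)$ the hard part is recovering $\nu$ from $R_j1$ alone. I would try to show that $R_j1\in\mathscr{C}^\omega$ forces $\mathscr{R}_j^\pm1\in\mathscr{C}^\omega(\Omega_\pm)$. The route is to write $\mathscr{R}^\pm_j1$ as the harmonic extension of its trace: the double layer $\mathscr{R}^\pm_j 1=\pm\tfrac12\nu_j+R_j1$ suggests using the Clifford algebra identity $\mathscr{C}^2=\tfrac14 I$ for the Cauchy--Clifford operator. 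That identity yields $\nu=$ a polynomial expression in $R_j1$ involving products of Riesz transforms applied to $R_k1$. One must also control the Hölder norm of these compositions without knowing a priori that $\Omega$ is Lyapunov, which may require a bootstrap: first get $\nu\in\mathrm{VMO}$, hence small local Lipschitz graphs, then run the Step 1 estimates with the a priori $\omega$-regularity of $R_j1$. I expect this step to be the main difficulty.
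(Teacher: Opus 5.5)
Your chain of implications is the same as the paper's. The arrows $(d)\Rightarrow(c)\Rightarrow(b)$ and $(f)\Rightarrow(e)\Rightarrow(a)$ are fine. One sign slip: the jump relation is $\mathscr{R}_j^{\pm}1\to\mp\frac12\nu_j+R_j1$, so $\nu_j=\mathscr{R}_j^{-}1-\mathscr{R}_j^{+}1$ on $\partial\Omega$; this is harmless.

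\textbf{The genuine gap is $(b)\Rightarrow(a)$.} You leave this open, and the bootstrap you suggest would not close it:
\begin{itemize}
\item $\nu\in\mathrm{VMO}$ does not in general yield local Lipschitz graphs; slowly spiralling boundaries have VMO normals.
\item Your Step~1 estimates for $T1$ and for the truncated integrals use $|\nu(x)-\nu(y)|\le C\omega(|x-y|)$, i.e.\ $(a)$ itself, so the bootstrap is circular.
\item $\mathbf{C}\big(\sum_j(R_j1)e_j\big)$ is not ``Riesz transforms applied to $R_k1$'': the Cauchy--Clifford kernel $\frac{x-y}{|x-y|^n}\odot\nu(y)$ contains $\nu$.
\end{itemize}

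The missing idea is that this $\nu$ is exactly what helps. $\mathbf{C}$ is a generalized double layer with kernel $\langle\nu(y),\vec{k}(x-y)\rangle$, where $\vec{k}$ is odd, homogeneous of degree $1-n$, and divergence-free. By the divergence theorem, $\mathbf{C}1$ is constant and
$\sup_{x,r}\big|\int_{\partial\Omega\setminus\overline{B(x,r)}}\langle\nu(y),\vec{k}(x-y)\rangle\,d\sigma(y)\big|<\infty$ on \emph{every} Ahlfors regular domain. These are precisely the two inputs your near/far splitting needs. So that splitting gives $\mathbf{C}:\mathscr{C}^\omega(\partial\Omega)\otimes\mathcal{C}\!\ell_n\to\mathscr{C}^{\omega_Z}(\partial\Omega)\otimes\mathcal{C}\!\ell_n\subseteq\mathscr{C}^\omega(\partial\Omega)\otimes\mathcal{C}\!\ell_n$ with no a priori regularity of $\nu$ (Theorems~\ref{theor:uewmp.222} and \ref{theor:ijiji}). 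Once UR is known from Nazarov--Tolsa--Volberg, $\mathbf{C}^2=\frac14 I$ on $L^2$ and $\nu\odot\nu=-1$ give
$\frac14\nu=\mathbf{C}(\mathbf{C}\nu)=-\mathbf{C}\big(\sum_j(R_j1)e_j\big)\in\mathscr{C}^\omega(\partial\Omega)\otimes\mathcal{C}\!\ell_n$, which finishes $(b)\Rightarrow(a)$ in two lines.

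\textbf{Step 1 is only a sketch, and one reduction in it is wrong.} The kernel $Y_k(x)/|x|^{n-1+k}$ has degree $1-n$, while $Y_k(\partial)E$ has degree $2-n-k$. So for $k\ge 3$ these kernels are not derivatives of $E$. Consequences:
\begin{itemize}
\item Your divergence-theorem argument for $T1\in\mathscr{C}^\omega$ is set up only for $\ell=1$.
\item Even for $\ell=1$ it produces the harmonic double layer acting on $\nu_j$ plus commutator terms with kernels $(\nu(x)-\nu(y))\cdot K(x-y)$. You do not estimate these; the crude bound loses a factor $\log(1/t)$, so you again need bounded truncations.
\item ``Modulo tangential derivatives'' cannot be handled by integrating by parts onto $\nu$, which is only $\mathscr{C}^\omega$.
\item The constant $C^\ell 2^{\ell^2}$ is asserted rather than derived, and the passage to $(f)$ is not carried out.
\end{itemize}

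The paper proceeds differently. It proves $(f)$ for harmonic $P$ by induction on $\ell$:
\begin{itemize}
\item The base case comes from $\nabla\mathcal{S}1=-\mathcal{C}\nu$.
\item Lemma~\ref{lemmapol} gives $\mathbb{T}_{rs}\nu=\frac{\ell-1}{n+\ell-3}\mathbb{T}^{rs}\nu_r$ up to additive constants, reducing degree $\ell$ to degree $\ell-2$.
\item This is combined with the weighted gradient bound $|\nabla\mathbb{T}f|\lesssim W_{\widetilde\omega}(\rho)$ from Lemma~\ref{lemma:pokor} and the uniform-domain Lemma~\ref{lemma:fokogtr}.
\end{itemize}
It then obtains $(d)$ from $(f)$ via the jump relation $Tf=\mathbb{T}f|_{\partial\Omega}-\frac{1}{2i}\widehat{k}(\nu)f$, using $\widehat{k}(\nu(\cdot))=\gamma_{n,\ell}P(\nu(\cdot))\in\mathscr{C}^\omega(\partial\Omega)$.
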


This generalizes earlier work in \cite[Theorem~1.1, pp.\,959--960]{MitMitVer16} where similar characterizations for 
domains of class ${\mathscr{C}}^{1+\alpha}$, with $\alpha\in(0,1)$, have been obtained. 
The latter scenario presently corresponds to the particular choice $\omega(t):=t^{\alpha}$ for each $t>0$
in Theorem~\ref{theor:uewmp}. Our present work adds further credence to the heuristic principle that 
the action of the distributional Riesz transforms \eqref{Rjdistributional1} on the 
constant function $1$ encapsulates much information, both of analytic and geometric flavor, about the 
underlying Ahlfors regular domain $\Omega\subseteq{\mathbb{R}}^n$ (with compact boundary). 
At the most basic level, the main result of F.~Nazarov, X.~Tolsa, and A.~Volberg in \cite[p.\,239]{NazTolVol14} states that 
\begin{equation}\label{Mabb88}
\partial\Omega\,\text{ is a {\rm UR} set}\,\Longleftrightarrow\,
R_j 1\in{\rm BMO}(\partial\Omega,\sigma)\,\,\,\text{ for each }\,\,j\in\{1,\dots,n\},
\end{equation}
and it has been noted in \cite[Theorem~1.5, p.\,962]{MitMitVer16} that 
\begin{equation}\label{eq:iugf.6r4}
\left.
\begin{array}{r}
\nu\in{\rm VMO}(\partial\Omega,\sigma)
\\[4pt]
\text{and $\partial\Omega$ is a {\rm UR} set}
\end{array}
\right\}
\Longleftrightarrow R_j1\in{\rm VMO}(\partial\Omega,\sigma)\,\,\text{ for all }\,\,j\in\{1,\dots,n\},
\end{equation}
where ${\rm VMO}(\partial\Omega,\sigma)$ stands for the Sarason space of functions with 
vanishing mean oscillation on $\partial\Omega$, with respect to the measure $\sigma$.
By further assigning additional regularity to the functionals $\{R_j1\}_{1\leq j\leq n}$ one obtains the 
following result (proved in \cite[Theorem~1.1, pp.\,959--960]{MitMitVer16}) 
\begin{equation}\label{eq:iugf.6r4.xxx}
\left.
\begin{array}{r}
\text{$\Omega$ is a domain}
\\[4pt]
\text{of class ${\mathscr{C}}^{1+\alpha}$}
\end{array}
\right\}
\Longleftrightarrow R_j1\in{\mathscr{C}}^{\alpha}(\partial\Omega)\,\,\text{ for all }\,\,j\in\{1,\dots,n\},
\end{equation}
where $\alpha\in(0,1)$ and ${\mathscr{C}}^{\alpha}(\partial\Omega)$ is the classical H\"older 
space of order $\alpha$ on $\partial\Omega$. 

Theorem~\ref{theor:uewmp} provides a satisfactory generalization of \eqref{eq:iugf.6r4.xxx} by allowing 
considerably more flexible scales of spaces measuring H\"older regularity (see the discussions in 
Examples~\ref{trfdfd-1}-\ref{trfdfd-3} in this regard). To place this in perspective, observe that the operators described 
in \eqref{trdcc-765} may be thought of as generalized Riesz transforms since they correspond to 
\eqref{Rieszpv} in the case when 
\begin{equation}\label{eq:Pjah}
P(x):=x_j/\varpi_{n-1}\,\,\text{ for }\,\,x=(x_1,\dots,x_n)\in{\mathbb{R}}^n,\quad 1\leq j\leq n.
\end{equation}
As such, our result may be interpreted as roughly saying that 
{\it the classical Riesz transforms are bounded on a generalized H\"older space
if and only if all generalized Riesz transforms are bounded on a generalized H\"older space
if and only if the underlying domain is Lyapunov}. 

Owing to their significant role in partial differential equations, Lyapunov domains have received 
considerable attention in the literature, leading to the development of several alternative characterizations. 
It is therefore natural to compare Theorem~\ref{theor:uewmp} with other geometric characterizations of Lyapunov domains 
that have been established. For example, for Jordan planar domains, E.~Casey has identified in \cite[Theorem~1]{Casey2024} 
a sufficient condition in terms of the rate of vanishing of the $\varepsilon$-Carleson function. 
Theorem~\ref{theor:uewmp} should also be compared with the following 
purely geometric characterization of Lyapunov domains obtained in \cite[Theorem~1.3, p.\,285]{ABMMZ}, which amounts to 
the ability of threading the boundary of the said domain in between the two rounded components of 
an ``hour-glass'' shaped configuration.

\begin{proposition}\label{MBV-33}
Fix $D\in(0,\infty)$ and let $\omega:(0,D]\to[0,\infty)$ be a continuous, strictly increasing function,  
with the property that 
\begin{equation}\label{OmOmOmX2}
\lim_{\lambda\to 0^{+}}\Big(\,\,\sup_{t\in(0,\min\{D,D/\lambda\}]}
\frac{\omega(\lambda\,t)}{\omega(t)}\Big)=0.
\end{equation}
Define the pseudo-ball associated with $\omega$ having apex at a point $x\in{\mathbb{R}}^n$, 
axis of symmetry along some vector $h\in S^{n-1}$, height $b>0$, and aperture $a>0$, as the set
\begin{equation}\label{DCC-PK.2}
\mathscr{G}^{\omega}_{a,b}(x,h):=\big\{y\in B(x,D)\setminus\{x\}:\,a|y-x|\,\omega(|y-x|)< h\cdot(y-x)<b\big\}.
\end{equation}

Then a given nonempty, open, proper subset $\Omega$ of ${\mathbb{R}}^n$, with compact boundary 
is a $\mathscr{C}^{1,\omega}$-domain if and only if there exist $a>0$, $b>0$ and a function 
$h:\partial\Omega\to S^{n-1}$ with the property that 
\begin{equation}\label{Bbvbv-44}
\mathscr{G}^{\omega}_{a,b}(x,h(x))\subseteq\Omega\,\,\text{ and }\,\,
\mathscr{G}^{\omega}_{a,b}(x,-h(x))\subseteq{\mathbb{R}}^n\setminus\Omega
\,\,\text{ for each }\,\,x\in\partial\Omega.
\end{equation}
\end{proposition}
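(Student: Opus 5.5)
We prove the two implications separately, working locally near a fixed boundary point and using compactness of $\partial\Omega$ to make all constants uniform. Throughout we use two consequences of \eqref{OmOmOmX2}. First, $\omega(t)\to 0$ as $t\to 0^{+}$. Second, $\omega$ is \emph{almost doubling in reverse}: we may fix $\lambda_0\in(0,1)$ with $\omega(\lambda_0 t)\le\tfrac12\omega(t)$ for all relevant $t$.

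\emph{Necessity.} Assume $\Omega$ is a $\mathscr{C}^{1,\omega}$-domain. Near each $x_0\in\partial\Omega$, after a rigid motion, $\Omega$ is the upper-graph of a $\mathscr{C}^1$ function $\varphi$ with $\nabla\varphi\in\mathscr{C}^{\omega}$. Take $h(x):=-\nu(x)$, the inner unit normal.
\begin{enumerate}
\item For $x=(x',\varphi(x'))$ and $y'$ near $x'$, the mean value theorem gives
\[
|\varphi(y')-\varphi(x')-\nabla\varphi(x')\cdot(y'-x')|\le C|y'-x'|\,\omega(|y'-x'|).
\]
\item In coordinates aligned with $h(x)$, this says the boundary near $x$ lies in the region $|h\cdot(y-x)|\le C|y-x|\,\omega(|y-x|)$.
\item Choose $a>C'$, with $C'$ absorbing the distortion between $|y'-x'|$ and $|y-x|$ and using monotonicity of $\omega$. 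Then every $y$ with $h\cdot(y-x)>a|y-x|\omega(|y-x|)$ and $|y-x|$ small lies strictly above the graph.
\item Restricting the height $b$ small enough keeps $\mathscr{G}^{\omega}_{a,b}(x,h)$ inside the coordinate cylinder. This needs care because $B(x,D)$ may be large: in the region $h\cdot(y-x)<b$, the condition $a|y-x|\omega(|y-x|)<h\cdot(y-x)$ forces $|y-x|\,\omega(|y-x|)<b/a$. Combined with the lower bound $h\cdot(y-x)>0$ and the cone shape, this yields smallness of $|y-x|$ once $b$ is small, by a short argument using that $\omega$ is strictly increasing and bounded below away from $0$.
\item The symmetric argument handles $-h$.
\end{enumerate}

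\emph{Sufficiency.} Assume \eqref{Bbvbv-44}.
\begin{enumerate}
\item Since $\omega(t)\to0$, the pseudo-balls contain genuine truncated cones of every aperture $<\pi/2$ at sufficiently small scales. Hence $\partial\Omega$ satisfies a two-sided cone condition with arbitrarily flat cones near each point.
\item This shows $h$ is continuous, because for nearby $x,z$ the cones at $x$ and $z$ must be compatible. It also shows $\partial\Omega$ is locally a Lipschitz, indeed $\mathscr{C}^1$, graph over the hyperplane $h(x_0)^{\perp}$, with $h$ equal to the inner normal.
\item It remains to quantify the continuity of $h$. Take $x,z\in\partial\Omega$ with $|x-z|=r$ and $\theta:=\angle(h(x),h(z))$. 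Since $z\notin\mathscr{G}^{\omega}_{a,b}(x,\pm h(x))$, we get $|h(x)\cdot(z-x)|\le ar\,\omega(r)$, and symmetrically at $z$.
\item Now look at points at scale $\rho=r/\lambda$ along the direction of $h(x)$ and $h(z)$. If $\theta\gg\omega(r)$, the region $\mathscr{G}^{\omega}(x,h(x))\subseteq\Omega$ and the region $\mathscr{G}^{\omega}(z,-h(z))\subseteq\mathbb{R}^n\setminus\Omega$ would intersect at distance $\approx Mr$ for a fixed large $M$. That is a contradiction.
\item Choosing $M$ and using the reverse doubling to compare $\omega(Mr)$ with $\omega(r)$ (more precisely, $\omega(Mr)\le C_M\omega(r)$ follows from monotonicity and the $\lambda_0$ property applied iteratively in the opposite direction only if we have doubling). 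To avoid needing doubling, we instead intersect at scale comparable to $r$ itself: we take a point $y=x+s\,h(x)$ with $s\approx r\,\theta$-dependent, and verify membership directly.
\item This gives $|h(x)-h(z)|\le C\omega(Cr)$. Since the statement only concerns the class $\mathscr{C}^{\omega}$, which is insensitive to replacing $\omega(Cr)$ by $\omega(r)$ up to the admissible equivalences assumed on $\omega$, this yields $h\in\mathscr{C}^{\omega}$.
\item Finally, a graph whose normal is $\mathscr{C}^{\omega}$ has $\nabla\varphi\in\mathscr{C}^{\omega}$.
\end{enumerate}

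The main obstacle is step 4 of the sufficiency argument: producing the quantitative angle estimate $|h(x)-h(z)|\lesssim\omega(|x-z|)$ from the nonintersection of the opposite pseudo-balls, while keeping the comparison scales within a bounded factor of $|x-z|$.
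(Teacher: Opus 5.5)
The paper does not prove this proposition; it is quoted from \cite[Theorem~1.3]{ABMMZ}. I therefore judge your argument on its own merits.

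Your necessity direction is fine. The Taylor-remainder bound, together with $|y'-x'|\le|y-x|$ and the monotonicity of $\omega$, produces the pseudo-balls. The localization also works, but for a different reason than the one you give: $t\mapsto t\,\omega(t)$ is continuous, strictly increasing and tends to $0$, so $b/a<\varepsilon\,\omega(\varepsilon)$ forces $\mathscr{G}^{\omega}_{a,b}(x,h)\subseteq B(x,\varepsilon)$. It is not because $\omega$ is ``bounded below away from $0$'', which is false.

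The genuine gap is the quantitative angle estimate in Steps~4--6 of sufficiency. Step~2 also rests on it: the continuity of $h$, and hence the fixed-axis cone condition and the graph structure, is only asserted there.

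Your test point $y=x+s\,h(x)$ cannot work. For it, $-h(z)\cdot(y-z)=-h(z)\cdot(x-z)-s\,h(x)\cdot h(z)\le ar\,\omega(r)-s\cos\theta<0$ as soon as $s\cos\theta>ar\,\omega(r)$. So $y$ lies on the $\Omega$-side of both approximate tangent planes and never enters $\mathscr{G}^{\omega}_{a,b}(z,-h(z))$.

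Your fallback, that $\mathscr{C}^{\omega}$ is insensitive to replacing $\omega(Cr)$ by $\omega(r)$, requires $\omega$ to be doubling. Doubling is neither assumed nor implied by \eqref{OmOmOmX2}, which bounds $\omega(\lambda t)$ above by a small multiple of $\omega(t)$, i.e.\ in the opposite direction. For instance, $\omega(t)=e^{-1/t}$ satisfies \eqref{OmOmOmX2} (the supremum equals $e^{-(1/\lambda-1)/D}$), while $\omega(2t)/\omega(t)=e^{1/(2t)}\to\infty$.

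The remedy is to take a test point within distance $r$ of \emph{both} $x$ and $z$, so that monotonicity of $\omega$ alone suffices.
Let $r:=|x-z|<\min\{b,D\}$ with $h(x)\neq h(z)$, and set $u:=(h(x)-h(z))/|h(x)-h(z)|$ and $\delta:=\tfrac12|h(x)-h(z)|$. Then $h(x)\cdot u=-h(z)\cdot u=\delta$.
The pseudo-balls are open (by continuity of $\omega$), hence miss $\partial\Omega$; together with $r<b$ this gives $h(x)\cdot(z-x)\ge-ar\,\omega(r)$ and $-h(z)\cdot(x-z)\ge-ar\,\omega(r)$.
Now put $y:=\tfrac12(x+z)+\tfrac r2\,u$. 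Then $|y-x|,|y-z|\le r$. Both $h(x)\cdot(y-x)$ and $-h(z)\cdot(y-z)$ are at least $\tfrac r2(\delta-a\,\omega(r))$ and at most $r<b$. Meanwhile $a|y-x|\,\omega(|y-x|)$ and $a|y-z|\,\omega(|y-z|)$ are at most $ar\,\omega(r)$.
Hence $\delta>3a\,\omega(r)$ would give $y\in\mathscr{G}^{\omega}_{a,b}(x,h(x))\cap\mathscr{G}^{\omega}_{a,b}(z,-h(z))\subseteq\Omega\cap(\mathbb{R}^n\setminus\Omega)=\varnothing$. Therefore $|h(x)-h(z)|\le 6a\,\omega(|x-z|)$ for all $x,z\in\partial\Omega$ with $|x-z|<\min\{b,D\}$.
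This supplies the continuity used in Step~2 and, once $h=-\nu$ is identified, shows $\nu\in\mathscr{C}^{\omega}(\partial\Omega)$.

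Your last step hides the same factor. For graph points, $|x-z|\le\sqrt{1+L^2}\,|x'-z'|$, where $L$ bounds $|\nabla\varphi|$. You can either work with the characterization through $\nu$ (cf. Remark~\ref{YrafV.b}), or rerun the argument above with an auxiliary boundary point lying over $\tfrac12(x'+z')$, in coordinates with $L\le 1$, so that every distance involved stays $\le|x'-z'|$.
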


\noindent In \cite[Theorem~4.4, pp.\,330--331]{ABMMZ} this was used to prove a sharper version of the Hopf-Oleinik Boundary
Point Principle in domains satisfying a ``pseudo-ball condition'' (in place of the classical interior ball condition).


Theorem~\ref{theor:uewmp} should be contrasted with the following result, which specifically focuses on the issue 
of boundedness of singular integral operators (SIO) on generalized H\"older spaces. The class of SIO's presently considered 
has been introduced in \cite[Chapter~5, pp.\,531--672]{GHA.IV}, under the label ``generalized double layers.''
The key feature of these operators is that the kernel is given by the inner product of the outward unit
normal $\nu$ with a divergence-free vector-valued kernel.
This class of operators includes the Cauchy–Clifford operator (see Section~\ref{SEc:5rff})
and the harmonic double layer potential operator, among others.
For the proof, the reader is referred to Section~\ref{KJB.D+M}.

\begin{theorem}\label{theor:uewmp.222}
Let $n\in{\mathbb{N}}$, $n\geq 2$ and suppose $\Omega\subset\mathbb{R}^n$ is an Ahlfors regular domain with compact boundary. 
Abbreviate $\sigma:={\mathcal{H}}^{n-1}\lfloor\partial\Omega$ and denote by $\nu$ the geometric measure theoretic outward unit 
normal to $\Omega$. Let $\omega:\big(0,\diam(\partial\Omega)\big)\to(0,\infty)$ be a bounded, doubling, non-decreasing, Dini function, 
whose limit at the origin vanishes. Associate with such a Dini growth function $\omega$ its Zygmund pair, 
$\omega_Z:\big(0,\diam(\partial\Omega)\big)\to(0,\infty)$ defined as
\begin{equation}\label{omega-cond:main.222b}
\omega_Z(t):=\int_0^{t}\omega(s)\frac{ds}{s}+t\,\int_t^{\diam(\partial\Omega)}\frac{\omega(s)}{s}\,\frac{ds}{s}
\,\,\text{ for each }\,\,t\in\big(0,\diam(\partial\Omega)\big).
\end{equation}
Finally, consider a vector-valued function 
\begin{align}\label{GDL-TH.it.000}
\begin{array}{c}
\vec{k}=(k_j)_{1\leq j\leq n}\in\big[{\mathscr{C}}^2({\mathbb{R}}^n\setminus\{0\})\big]^n
\\[6pt]
\text{odd, positive homogeneous of degree $1-n$,} 
\\[6pt]
\text{satisfying }\,\,{\rm div}\,\vec{k}=\sum_{j=1}^n\partial_jk_j=0\,\,\text{ in }\,\,{\mathbb{R}}^n\setminus\{0\}.
\end{array}
\end{align}

Then the following statements are true:

\begin{enumerate}
\item[(i)] The {\rm (}boundary-to-boundary{\rm )} generalized double layer potential operator
acts in a meaningful fashion on each function $f\in{\mathscr{C}}^{\omega}(\partial\Omega)$ according to 
\begin{align}\label{16778.bbb.222.GDL}
\qquad\quad Tf(x):=\lim_{\varepsilon\to 0^{+}}\int\limits_{\substack{y\in\partial\Omega\\ |x-y|>\varepsilon}}
\langle\nu(y),\vec{k}(x-y)\rangle f(y)\,d\sigma(y)\,\,\text{ for $\sigma$-a.e. }\,\,x\in\partial\Omega.
\end{align}
Defined as such, $Tf$ coincides outside of a $\sigma$-nullset with a function in ${\mathscr{C}}^{\omega_Z}(\partial\Omega)$.
Thus interpreted, $T$ induces a well-defined, linear, and bounded mapping 
\begin{equation}\label{Jgsag.TTT.222}
T:{\mathscr{C}}^{\omega}(\partial\Omega)\longrightarrow{\mathscr{C}}^{\omega_Z}(\partial\Omega).
\end{equation}
In particular, under the additional assumption that there exists $C\in(0,\infty)$ for which 
\begin{equation}\label{omega-cond:main.333}
\qquad\quad\int_0^{t}\omega(s)\frac{ds}{s}+t\,\int_t^{\diam(\partial\Omega)}\frac{\omega(s)}{s}\,\frac{ds}{s}
\leq C\omega(t)\,\text{ for all }\,t\in\big(0,\diam(\partial\Omega)\big),
\end{equation}
it follows that 
\begin{equation}\label{Jgsag.TTT.333}
T:{\mathscr{C}}^{\omega}(\partial\Omega)\longrightarrow{\mathscr{C}}^{\omega}(\partial\Omega).
\end{equation}

\item[(ii)] Strengthen the original geometric hypotheses by also assuming that $\Omega$ is a uniform domain
(cf. Definition~\ref{tFCa-u6fr}). 
Then the {\rm (}boundary-to-domain{\rm )} generalized double layer potential operator, originally defined for each function 
$f\in L^1(\partial\Omega,\sigma)$ according to 
\begin{align}\label{GDL-TH.it.1}
{\mathcal{T}}f(x):=\int_{\partial\Omega}\langle\nu(y),\vec{k}(x-y)\rangle f(y)\,d\sigma(y)\,\,\text{ for all }\,\,x\in\Omega,
\end{align}
induces a well-defined, linear, and bounded mapping 
\begin{equation}\label{Jgsag.TTT}
{\mathcal{T}}:{\mathscr{C}}^{\omega}(\partial\Omega)\longrightarrow{\mathscr{C}}^{\omega_Z}(\Omega).
\end{equation}

Moreover, given any $f\in{\mathscr{C}}^{\omega}(\partial\Omega)$, under the identification
${\mathscr{C}}^{\omega_Z}(\Omega)\equiv{\mathscr{C}}^{\omega_Z}(\overline{\Omega})$, 
one may extend ${\mathcal{T}}f$ uniquely to a function in ${\mathscr{C}}^{\omega_Z}(\overline{\Omega})$.
Regarded as such, one then has the boundary trace formula
\begin{equation}\label{eq:-knbmdnjm.D+M}
({\mathcal{T}}f)\big|_{\partial\Omega}=-\tfrac{\vartheta}{2}f+Tf\,\,\text{ everywhere on }\,\,\partial\Omega,
\end{equation} 
where 
\begin{equation}\label{kjshgh.FF.D+M}
\qquad\quad\vartheta:=\int_{S^{n-1}}\langle x,\vec{k}(x)\rangle\,d\mathcal{H}^{n-1}(x)
=\sum_{j=1}^n\int_{S^{n-1}}x_jk_j(x)\,d\mathcal{H}^{n-1}(x)\in\mathbb{C}.
\end{equation}

Finally, under the additional assumption that \eqref{omega-cond:main.333} holds for some constant $C\in(0,\infty)$, it follows that 
\begin{equation}\label{Jgsag.TTT.D+M}
{\mathcal{T}}:{\mathscr{C}}^{\omega}(\partial\Omega)\longrightarrow{\mathscr{C}}^{\omega}(\Omega)
\end{equation}
is a well-defined, linear, and bounded mapping.
\end{enumerate}
\end{theorem}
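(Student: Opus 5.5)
The plan is to follow the classical Plemelj--Privalov scheme and exploit the divergence-free structure of $\vec{k}$ through the divergence theorem. The key geometric input is the following. For an Ahlfors regular domain $\Omega$, each $x\in\partial\Omega$ and each $r>0$, integrating $\div\vec{k}(x-\cdot)=0$ over $\Omega\cap B(x,r)\setminus B(x,\varepsilon)$ and letting $\varepsilon\to0^+$ gives a uniform bound
\begin{equation*}
\sup_{x\in\partial\Omega,\ 0<\varepsilon<r}\Big|\int_{\substack{y\in\partial\Omega\\ \varepsilon<|x-y|<r}}\langle\nu(y),\vec{k}(x-y)\rangle\,d\sigma(y)\Big|\leq C.
\end{equation*}
The argument uses homogeneity of degree $1-n$ of $\vec{k}$ and Ahlfors regularity on the spherical pieces $\partial B(x,r)\cap\Omega$. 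In addition, the limit as $\varepsilon\to0^+$ of the truncated integrals of the kernel equals $-\tfrac{\vartheta}{2}$ plus an integral over $\Omega\cap\partial B(x,r)$. This holds at every point where $\Omega$ has density $1/2$, hence $\sigma$-a.e. Thus $T1$, and more generally truncated versions of $T$ applied to $1$, are bounded with cancellation, with no UR hypothesis needed.

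For part (i), I would write $Tf(x)=\lim_{\varepsilon\to 0^+}\int_{|x-y|>\varepsilon}K(x,y)(f(y)-f(x))\,d\sigma(y)+f(x)\,T1(x)$. The first integral converges absolutely thanks to the Dini condition, since $|K(x,y)|\lesssim|x-y|^{1-n}$ and $\int_0\omega(s)\,ds/s<\infty$. For the Hölder estimate, fix $x,z\in\partial\Omega$ with $r:=|x-z|$ and split $\partial\Omega$ into the near region $B(x,2r)$ and the far region.

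In the near region, the absolute bounds give a contribution $\lesssim\int_0^{Cr}\omega(s)\,ds/s$.

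In the far region, subtract the constant $f(x)$, or more conveniently $f(z)$, and write $K(x,y)-K(z,y)=\langle\nu(y),\vec{k}(x-y)-\vec{k}(z-y)\rangle$. The $\mathscr{C}^1$ smoothness of $\vec{k}$ gives the bound $r|x-y|^{-n}$, which yields $r\int_r^{\diam}\omega(s)\,ds/s^2$. The leftover terms have the form $(f(x)-f(z))\int_{|x-y|>2r}K(z,y)\,d\sigma(y)$, which is bounded by the uniform cancellation estimate above. There are also boundary terms of the type $f(z)\big[\text{truncated }T1(x)-\text{truncated }T1(z)\big]$. I would handle the latter by expressing the truncated $T1$ via the divergence theorem as spherical integrals over $\Omega\cap\partial B(\cdot,2r)$ and a fixed large sphere, with differences controlled by $r/\diam$-type terms. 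Altogether this gives $|Tf(x)-Tf(z)|\lesssim\omega_Z(r)\|f\|_{\mathscr{C}^\omega}$. The boundedness $\sup|Tf|\lesssim\|f\|$ follows the same way. Under \eqref{omega-cond:main.333}, $\omega_Z\lesssim\omega$, which gives \eqref{Jgsag.TTT.333}.

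For part (ii), I would use uniformity of $\Omega$, meaning corkscrew and Harnack chains. Since $\mathscr{C}^{\omega_Z}$ is controlled along chains, I would reduce to two estimates. The first is a pointwise bound $|\nabla\mathcal{T}f(X)|\lesssim\omega_Z(\delta(X))/\delta(X)$, obtained by writing $\nabla\mathcal{T}f(X)=\int\nabla_X K(X,y)(f(y)-f(x_X))\,d\sigma+f(x_X)\nabla\mathcal{T}1(X)$ with $x_X$ a nearest boundary point. Here $\mathcal{T}1$ is locally constant in $\Omega$ by the divergence theorem: it equals $-\vartheta$ times an indicator, up to a smooth contribution when $\partial\Omega$ is compact and $\Omega$ is unbounded, with the sign fixed by orientation. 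The second is the nontangential convergence $\mathcal{T}f(X)\to(-\tfrac{\vartheta}{2}+T)f(x)$. I would prove this by comparing $\mathcal{T}f(X)$ with the truncated boundary integral at scale $\delta(X)$, using the same jump computation as in the cancellation lemma. Integrating the gradient bound along Harnack chains and cones gives the $\mathscr{C}^{\omega_Z}(\Omega)$ estimate, whose integral of $\omega_Z(s)/s$ is again comparable to $\omega_Z$ since $\omega_Z$ is doubling with $\omega_Z(t)/t$ decreasing. The continuous extension to $\overline{\Omega}$ then matches \eqref{eq:-knbmdnjm.D+M} everywhere on $\partial\Omega$, by part (i) and density.

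The main obstacle is the cancellation lemma. It requires the uniform bound and the exact jump value $-\vartheta/2$ in a setting that is only Ahlfors regular with merely $\mathscr{C}^2$ kernel. In particular, the argument needs the divergence theorem for sets of locally finite perimeter intersected with balls, valid for a.e. radius, together with control of $\mathcal{H}^{n-1}(\Omega\cap\partial B(x,r))$ that is uniform in $r$. I would first prove it for a.e. $r$ via the coarea formula and then pass to all $r$ by monotone averaging in $r$.
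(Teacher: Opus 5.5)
Your plan for (i) and the overall structure of (ii) follow the paper. That structure is: constancy of $\mathcal{T}1$, nearest-point decomposition of $\nabla\mathcal{T}f$, integration along uniform-domain curves, the nontangential jump, and upgrading $\sigma$-a.e.\ equality of two continuous functions to equality everywhere. However, the key quantitative step in (ii) fails for the general mapping \eqref{Jgsag.TTT}.

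You bound $|\nabla\mathcal{T}f(X)|\lesssim\omega_Z(\delta(X))/\delta(X)$ and then claim $\int_0^t\omega_Z(s)\,\frac{ds}{s}\lesssim\omega_Z(t)$ because $\omega_Z$ is doubling with $\omega_Z(t)/t$ decreasing. This is false for general doubling Dini $\omega$. Take $D:=\diam(\partial\Omega)$ and $\omega(t):=(1+\ln(D/t))^{-2}$. Then $\omega_Z(t)\geq\int_0^t\omega(s)\,\frac{ds}{s}=(1+\ln(D/t))^{-1}$, so $\int_0^t\omega_Z(s)\,\frac{ds}{s}=+\infty$, and integrating your gradient bound along a path yields no estimate at all.

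The gradient bound itself is too lossy. By definition $\omega_Z(\delta)/\delta=\delta^{-1}\int_0^\delta\omega(s)\,\frac{ds}{s}+\int_\delta^D\omega(s)\,\frac{ds}{s^2}$, and in this example the first summand exceeds the second by a factor $\approx 1+\ln(D/\delta)$. What your decomposition actually gives is $|\nabla\mathcal{T}f(X)|\lesssim W_{\widetilde{\omega}}(\delta(X))\,\|f\|_{\mathscr{C}^{\omega}(\partial\Omega)}$, with $W_{\widetilde{\omega}}$ as in \eqref{W+:general}. Indeed, the near part is $\lesssim\widetilde{\omega}(2\delta)/\delta\leq 2W_{\widetilde{\omega}}(2\delta)$, the far part is a dyadic sum $\lesssim W_{\widetilde{\omega}}(\delta)$, and $\nabla\mathcal{T}1=0$.

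The missing idea is that $W_{\widetilde{\omega}}$ is non-increasing with $\int_0^t W_{\widetilde{\omega}}\lesssim\omega_Z(t)$, by Fubini (see \eqref{dewdead-ttt} and \eqref{dewdead-ttt.xxx}). Then along the uniform-domain curve $\gamma$ with $\min\{s,L-s\}\leq\varkappa\,\delta(\gamma(s))$ you get $|\mathcal{T}f(X)-\mathcal{T}f(Y)|\lesssim 2\varkappa\int_0^{|X-Y|}W_{\widetilde{\omega}}\lesssim\omega_Z(|X-Y|)$. This is precisely the content of Lemmas~\ref{6gfdc}, \ref{lemma:pokor} and \ref{lemma:fokogtr}. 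Your cruder bound does suffice for \eqref{Jgsag.TTT.D+M}, since under \eqref{omega-cond:main.333} one has $\omega_Z\approx\omega$ and $\int_0^t\omega(s)\,\frac{ds}{s}\lesssim\omega(t)$. It does not suffice for \eqref{Jgsag.TTT}.

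A smaller point in (i) concerns the ``boundary terms'' $f(z)\big[\text{truncated }T1(x)-\text{truncated }T1(z)\big]$. These are not controlled by quantities of size $r/\diam(\partial\Omega)$: the spherical pieces over $\Omega\cap\partial B(\cdot,2r)$ can differ by $O(1)$, e.g.\ near a corner, which would only give $O(\sup_{\partial\Omega}|f|)$. They need not appear at all. With the decomposition you wrote first, the remainder is $f(x)T1(x)-f(z)T1(z)$. Your own divergence-theorem identity, applied with a sphere enclosing $\partial\Omega$, shows $T1=\mp\frac{\vartheta}{2}$ $\sigma$-a.e. (minus for bounded $\Omega$, plus for exterior domains). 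So the remainder is just $\mp\frac{\vartheta}{2}(f(x)-f(z))$, exactly as in \eqref{eq:sio-abfhtj}.

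Two further details. First, density $1/2$ alone does not produce the value $-\frac{\vartheta}{2}$ for a general $\vec{k}$, since the even function $\langle\theta,\vec{k}(\theta)\rangle$ need not be constant on $S^{n-1}$. Work instead at points of $\partial^*\Omega$, where blow-ups are half-spaces, together with an a.e.-radius selection. Second, in the nontangential limit, truncate at a fixed $\varepsilon$ (or at $M\delta(X)$ with $M\to\infty$) rather than at scale $\delta(X)$, so that $X$ stays well inside the small ball.
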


The Cauchy integral operator in the plane is a prime example of an operator to which Theorem~\ref{theor:uewmp.222} applies. 
To make this transparent, work in the two-dimensional setting, and identify ${\mathbb{R}}^2\equiv{\mathbb{C}}$. 
Let $\Omega\subset\mathbb{R}^2$ be an Ahlfors regular domain with compact boundary. 
Abbreviate $\sigma:={\mathcal{H}}^{1}\lfloor\partial\Omega$ and denote by $\nu=(\nu_1,\nu_2)\equiv\nu_1+i\nu_2$ 
the geometric measure theoretic outward unit normal to $\Omega$. Also, consider  
\begin{equation}\label{jhDSc-uH.a}
\vec{k}(z):=-\frac{1}{2\pi}\Big(\frac{1}{z},\frac{i}{z}\Big)\,\,\text{ for each }\,\,z\in{\mathbb{C}}\setminus\{0\}.
\end{equation}
This is a smooth vector-valued function (with complex-valued components) which is odd, positive homogeneous of degree $-1$, and satisfies
\begin{align}\label{jhDSc-uH.b}
\big({\rm div}\,\vec{k}\,\big)(z) &=-\frac{1}{2\pi}\partial_x\Big(\frac{1}{z}\Big)
-\frac{1}{2\pi}\partial_y\Big(\frac{i}{z}\Big)
=-\frac{1}{2\pi}(\partial_x+i\,\partial_y)\Big(\frac{1}{z}\Big)
\nonumber\\[6pt]
&=-\frac{1}{\pi}\overline{\partial}\Big(\frac{1}{z}\Big)
=0\,\,\text{ for each }\,\,z\in{\mathbb{C}}\setminus\{0\},
\end{align}
where $\overline{\partial}:=\tfrac{1}{2}(\partial_{x}+i\,\partial_{y})$ is the Cauchy-Riemann operator in the plane.
Hence all hypotheses of Theorem~\ref{theor:uewmp.222} are satisfied in this case, and \eqref{kjshgh.FF.D+M} presently becomes
\begin{align}\label{KD-jmp-rr.Gttt.FD}
\vartheta &=\int_{S^{1}}\langle\xi,\vec{k}(\xi)\rangle\,d{\mathcal{H}}^{1}(\xi)
=-\frac{1}{2\pi}\int_{S^{1}}\Big\{\xi_1\Big(\frac{1}{\xi}\Big)+\xi_2\Big(\frac{i}{\xi}\Big)\Big\}\,d{\mathcal{H}}^{1}(\xi)
\nonumber\\[6pt]
&=-\frac{1}{2\pi}\int_{S^{1}}(\xi_1+i\xi_2)\Big(\frac{1}{\xi}\Big)\,d{\mathcal{H}}^{1}(\xi)
=-\frac{1}{2\pi}\int_{S^{1}}1\,d{\mathcal{H}}^{1}(\xi)=-1.
\end{align}
Since for each $z\in\Omega$ we have 
\begin{align}\label{jhDSc-uH.c}
\big\langle\nu(\zeta)\,,\,\vec{k}(z-\zeta)\big\rangle &=-\frac{1}{2\pi}\nu_1(\zeta)\Big(\frac{1}{z-\zeta}\Big)
-\frac{1}{2\pi}\nu_2(\zeta)\Big(\frac{i}{z-\zeta}\Big)
\nonumber\\[6pt]
&=-\frac{1}{2\pi}\big(\nu_1(\zeta)+i\nu_2(\zeta)\big)\Big(\frac{1}{z-\zeta}\Big)
\nonumber\\[6pt]
&=\frac{1}{2\pi}\frac{\nu(\zeta)}{\zeta-z}\,\,\text{ for $\sigma$-a.e. }\,\,\zeta\in\partial\Omega,
\end{align}
and since $i\nu(\zeta)\,d\sigma(\zeta)=d\zeta$ on $\partial\Omega$, the operator ${\mathcal{T}}$ 
constructed as in \eqref{GDL-TH.it.1} for $\vec{k}$ as in \eqref{jhDSc-uH.a} becomes precisely the boundary-to-domain 
Cauchy integral operator, acting on each function $f\in L^1(\partial\Omega,\sigma)$ according to 
\begin{equation}\label{jhDSc-uH.d}
{\mathscr{C}}f(z)=\frac{1}{2\pi i}\int_{\partial\Omega}\frac{f(\zeta)}{\zeta-z}\,d\zeta\,\,\text{ for all }\,\,z\in\Omega.
\end{equation}
Also, the principal-value singular integral operator $T$ from \eqref{16778.bbb.222.GDL} presently becomes
\begin{equation}\label{jhDSc-uH.e}
Cf(z):=\lim_{\varepsilon\to 0^{+}}\frac{1}{2\pi i}\int\limits_{\substack{\zeta\in\partial\Omega\\ |z-\zeta|>\varepsilon}}
\frac{f(\zeta)}{\zeta-z}\,d\zeta\,\,\text{ for $\sigma$-a.e. }\,\,z\in\partial\Omega,
\end{equation}
i.e., the principal-value Cauchy integral operator on $\partial\Omega$. As such, all conclusions of Theorem~\ref{theor:uewmp.222} are valid
for the operators \eqref{jhDSc-uH.d}-\eqref{jhDSc-uH.e}. In particular, \eqref{Jgsag.TTT.333} becomes the statement that the 
principal-value Cauchy singular integral operator induces a continuous mapping 
\begin{equation}\label{Jgsag.TTT.333.uuu}
C:{\mathscr{C}}^{\omega}(\partial\Omega)\longrightarrow{\mathscr{C}}^{\omega}(\partial\Omega)
\end{equation}
whenever $\Omega\subset\mathbb{R}^2\equiv{\mathbb{C}}$ is an arbitrary Ahlfors regular domain with compact boundary and 
$\omega:\big(0,\diam(\partial\Omega)\big)\to(0,\infty)$ is a bounded, doubling, non-decreasing, Dini function, 
whose limit at the origin vanishes, and which satisfies \eqref{omega-cond:main.333}. 

From this point of view, Theorem~\ref{theor:uewmp.222} may be regarded as a higher-dimensional generalization of the classical 
Plemelj-Privalov theorem regarding the boundedness of the Cauchy integral operator on standard H\"older spaces on the unit circle 
in the complex plane (cf. \cite{Plemelj1908}, \cite{Privalov1916}). Subsequent improvements focused on relaxing the geometric assumptions. 
For example, Privalov treated piecewise smooth curves without cusps in \cite{Privalov1939}
(note that the piecewise smoothness assumption and the absence of cusps guarantee that the domain encircled by said curve 
is an Ahlfors regular domain in our terminology). Additional classical contributions in this regard are due to Muskhelishvili \cite{Mush1953} 
and Davydov \cite{Dav1949}. A generalization of the Plemelj-Privalov theorem for moduli of continuity has been given by Zygmund \cite{Zyg1923} 
in the case of a circle. In \cite{BaSt1956}, Bari and Stechkin have shown that the class of moduli of continuity considered by Zygmund
is optimal in the case of a circle. Extensions to more general geometric settings have been obtained by Magnaradze \cite{Magda1947}, 
Babaev and Salaev \cite{BaSa1965}, Dyn'kin \cite{Dyn1979}, and Guseinov in \cite{Gus1992}. In particular, 
Salaev \cite{Salaev1976} has shown the optimality class of moduli of continuity introduced by Zygmund in 
the class of Ahlfors regular curves. 

Theorem~\ref{theor:uewmp.222} also applies to the {\it Clifford-Cauchy integral operator} considered on Ahlfors regular domains in ${\mathbb{R}}^n$. 
This may be thought of as a higher-dimensional version of the `standard' Cauchy operator in the plane, constructed in relation to a 
higher-dimensional (and, this time, highly noncommutative) version of the field of complex numbers, dubbed Clifford algebra. 
We shall elaborate on this topic later (see the discussion in \S\ref{SEc:5rff}). For now, we make the following observation 
paving the way towards such a generalization. 

\begin{remark}\label{KJag.GFF.D+M}
All results in Theorem~\ref{theor:uewmp.222} remain valid when the components $(k_j)_{1\leq j\leq n}$ of the vector field $\vec{k}$ are 
{\it matrix}-{\it valued} functions, i.e., when for some $M\in{\mathbb{N}}$ in place of the first line in \eqref{GDL-TH.it.000} we have  
\begin{equation}\label{GDL-TH.it.000.D+M}
\vec{k}=(k_j)_{1\leq j\leq n}\in\Big[{\mathscr{C}}^2({\mathbb{R}}^n\setminus\{0\})^{M\times M}\Big]^n,
\end{equation}
and when all generalized H\"older spaces consist of vector-valued functions, i.e., 
${\mathscr{C}}^{\omega}(\partial\Omega)$ is replaced by ${\mathscr{C}}^{\omega}(\partial\Omega)^M$, et cetera
{\rm (}note that in this scenario, $\vartheta$ defined as in \eqref{kjshgh.FF.D+M} now belongs to $\mathbb{C}^{M\times M}${\rm )}.
Indeed, this ``matrix-vector'' version of Theorem~\ref{theor:uewmp.222} is readily obtained by applying the current scalar version to each vector field 
\begin{equation}\label{GDL-TH.it.000.D+M.2}
\vec{k}^{\alpha\beta}:=(k_j^{\alpha\beta})_{1\leq j\leq n}\in\big[{\mathscr{C}}^2({\mathbb{R}}^n\setminus\{0\})\big]^n,
\,\,\text{ with }\,\,1\leq\alpha,\beta\leq M,
\end{equation}
where each $k_j^{\alpha\beta}$ denotes the $(\alpha,\beta)$-entry in the ${\mathbb{C}}^{M\times M}$-valued function $k_j$.
\end{remark}

Finally, we note that Theorem~\ref{theor:uewmp.222}  is applicable to double layer potential operators associated with 
second-order elliptic systems of partial differential operators (of the sort considered in \cite{GHA.IV}-\cite{GHA.V}), 
and this opens the door for considering boundary value problems with data in generalized H\"older spaces. 

The layout of the paper is as follows. In \S\ref{sec:growth} we introduce and study 
growth functions and we provide an example illustrating that generalized H\"older 
spaces may not be contained within the scale of classical H\"older spaces.
Growth functions are then used to define generalized H\"{o}lder spaces in \S\ref{sec:holder}.
Essentially, \S\ref{sec:gmt} amounts to a compendium on the subject of geometric measure theory.
We then proceed to study singular integrals on generalized H\"older spaces in \S\ref{Sec:6ff}.
The tools presented up to that point are employed to prove Theorem~\ref{theor:uewmp.222} in \S\ref{KJB.D+M}.
In \S\ref{SEc:5rff}, we develop the Clifford algebra framework to specialize
Theorem~\ref{theor:uewmp.222} to Cauchy–Clifford operators.
Finally, \S\ref{treED} contains the proof of our main result, Theorem~\ref{theor:uewmp}, 
which proceeds by induction on the degree $\ell$ of the polynomial $P$ involved
in \eqref{trdcc-765} and \eqref{defT}.
Following \cite[pp.\,998-1005]{MitMitVer16}, our approach relies on the Clifford algebra techniques mentioned earlier.

\section{Growth functions}\label{sec:growth}

\begin{definition}\label{gf-Def-1}
Given a number $D\in(0,\infty)$, a function $\omega:(0,D)\to(0,\infty)$ is called a {\tt growth} {\tt function} 
{\tt on} $(0,D)$ if $\omega$ is non-decreasing and
\begin{equation}\label{eq:wD}
\lim_{t\to 0^{+}}\omega(t)=0,\qquad\omega(D):=\lim_{t\to D^{-}}\omega(t)<\infty.
\end{equation}
Corresponding to the case when $D=\infty$, a function $\omega:(0,\infty)\to(0,\infty)$ 
is called a {\tt growth} {\tt function} {\tt on} $(0,\infty)$ if $\omega$ is non-decreasing and 
\begin{equation}\label{terdds-tr}
\lim_{t\to 0^{+}}\omega(t)= 0.
\end{equation}

If $\omega$ is a growth function on $(0,D)$ with $D\in(0,\infty]$, call $\omega$ {\tt doubling} provided 
\begin{equation}\label{wnonin-hGFF}
C_\omega^{\rm dou}:=
\sup_{0<t<D/2}\frac{\omega(2t)}{\omega(t)}<\infty,
\end{equation}
and refer to $C_\omega^{\rm dou}$ as the doubling constant of $\omega$. 
Also, call $\omega$ {\tt Dini} provided 
\begin{equation}\label{wnonin-hGFF.DDD}
\int_0^D\frac{\omega(t)}{\max\{t,t^2\}}\,dt<\infty.
\end{equation}
\end{definition}

It turns out that any growth function on an interval extends to a growth function on $(0,\infty)$. 
For ease of reference, we state this formally in the remark below. 

\begin{remark}\label{rem:wextension}
Let $D\in(0,\infty]$. If $\omega$ is a growth function on $(0,D)$ then 
\begin{equation}\label{om-EXT}
\widetilde{\omega}(t):=\omega\big(\min\{t,D\}\big)\,\,\text{ for each }\,\,t\in(0,\infty)
\end{equation}
is a growth function on $(0,\infty)$ with the property that $\widetilde{\omega}=\omega$ on $(0,D)$. 
Also, it is immediate from definitions that 
\begin{equation}\label{wnonin-hGFF.cz}
\parbox{9.00cm}{if $\omega$ is a doubling growth function on $(0,D)$, with $D\in(0,\infty]$, 
then $\widetilde{\omega}$ is a doubling growth function on the interval $(0,\infty)$
satisfying $C_{\widetilde{\omega}}^{\rm dou}=C_\omega^{\rm dou}$,}
\end{equation}
and
\begin{equation}\label{wnonin-hGFF.cz.EF}
\parbox{10.30cm}{if $\omega$ is a Dini growth function on $(0,D)$, with $D\in(0,\infty]$, 
then $\widetilde{\omega}$ is a Dini growth function on the interval $(0,\infty)$.}
\end{equation}
\end{remark}

Following Zygmund, with any given Dini growth function $\omega:(0,D)\to(0,\infty)$ with $D\in(0,\infty]$
we shall associate 
\begin{equation}\label{omega-cond:main}
\omega_Z(t):=\int_0^{t}\omega(s)\frac{ds}{s}+t\,\int_t^{D}\frac{\omega(s)}{s}\,\frac{ds}{s}
\,\,\text{ for each }\,\,t\in(0,D).
\end{equation}
For this we have:
\begin{equation}\label{wnonin-hGFF.cz.UU}
\parbox{11.00cm}{if $\omega$ is some Dini growth function on $(0,D)$, with $D\in(0,\infty]$, 
then $\omega_Z:(0,D)\to(0,\infty)$ is a growth function on $(0,D)$; 
when $\omega$ is also doubling then $\omega_Z$ is doubling and there 
exists $c\in(0,\infty)$ which depends only on the doubling constant of $\omega$ with the 
property that $\omega_Z(t)\geq c\,\omega(t)$ for each $t\in(0,D)$.}
\end{equation}
Indeed, the fact that $\omega$ is Dini implies that $\omega_Z:(0,D)\to(0,\infty)$ is well-defined, 
and whenever $0<t_1<t_2<D$ we may estimate 
\begin{equation}\label{omega-cond:main.A}
\omega_Z(t_2)-\omega_Z(t_1)\geq(t_2-t_1)\int_{t_2}^D\frac{\omega(s)}{s}\,\frac{ds}{s}\geq 0,
\end{equation}
which shows that $\omega_Z$ is strictly increasing. Also, for each $t\in(0,D/2)$ we may write
(making a change of variables and using the fact that $\omega$ is doubling) 
\begin{align}\label{omega-cond:main.B}
\omega_Z(2t) &=\int_0^{2t}\omega(s)\frac{ds}{s}+2t\,\int_{2t}^{D}\frac{\omega(s)}{s}\,\frac{ds}{s}
\nonumber\\[6pt]
&\leq\int_0^{t}\omega(2\tau)\frac{d\tau}{\tau}+2t\,\int_{t}^{D}\frac{\omega(s)}{s}\,\frac{ds}{s}
\nonumber\\[6pt]
&\leq C_\omega^{\rm dou}\int_0^{t}\omega(s)\frac{ds}{s}+2t\,\int_{t}^{D}\frac{\omega(s)}{s}\,\frac{ds}{s}
\leq\max\big\{2,C_\omega^{\rm dou}\big\}\omega_Z(t),
\end{align}
proving that $\omega_Z$ is doubling. Furthermore, when $D/2\leq t<D$ the fact that $\omega$ is non-decreasing and doubling permits us to estimate 
\begin{align}\label{dewdead-ttt.xxx.2.aaa}
\omega(t) &\leq\omega(D)\leq(C_\omega^{\rm dou})^2\omega(D/4)
\leq\frac{(C_\omega^{\rm dou})^2}{\ln 2}\int_{D/4}^{D/2}\omega(s)\,\frac{ds}{s}
\nonumber\\[6pt]
&\leq\frac{(C_\omega^{\rm dou})^2}{\ln 2}\int_0^{t}\omega(s)\,\frac{ds}{s}
\leq\frac{(C_\omega^{\rm dou})^2}{\ln 2}\omega_Z(t).
\end{align}
Finally, when $0<t<D/2$ the fact that $\omega$ is non-decreasing allows us to estimate 
\begin{align}\label{dewdead-ttt.xxx.3.bbb}
\frac{1}{2}\omega(t) &\leq\omega(t)\Big(1-\frac{t}{D}\Big)=t\omega(t)\Big(\frac{1}{t}-\frac{1}{D}\Big)
\nonumber\\[6pt]
&=t\omega(t)\int_{t}^D\frac{ds}{s^2}
\leq t\int_{t}^D\frac{\omega(s)}{s}\,\frac{ds}{s}
\leq\omega_Z(t),
\end{align}
and this completes the proof of \eqref{wnonin-hGFF.cz.UU}. 

Moving on, another assumption (which plays a natural role in a variety of contexts) imposed on a given 
Dini growth function $\omega:(0,D)\to(0,\infty)$, with $D\in(0,\infty]$, is that  
\begin{equation}\label{omega-cond:main.111}
C_\omega^{\rm Zyg}:=\sup_{0<t<D}\frac{\omega_Z(t)}{\omega(t)}<+\infty.
\end{equation}
Whenever $\omega:(0,D)\to(0,\infty)$, with $D\in(0,\infty]$, is a Dini growth function for which \eqref{omega-cond:main.111} holds, 
we shall say that $\omega$ is a {\tt Zygmund} {\tt modulus} {\tt of} {\tt continuity}.

Hence, a Dini growth function $\omega:(0,D)\to(0,\infty)$, with $D\in(0,\infty]$, is a Zygmund modulus of continuity if and only if 
there exists $C\in(0,\infty)$ with the property that
\begin{equation}\label{omega-cond:main.222}
\int_0^{t}\omega(s)\frac{ds}{s}+t\,\int_t^{D}\frac{\omega(s)}{s}\,\frac{ds}{s}\leq C\omega(t)\,\,\text{ for each }\,\,t\in(0,D),
\end{equation}
and the best constant doing the job in \eqref{omega-cond:main.222} is $C_\omega^{\rm Zyg}$. We remark that when $D<\infty$ the
inequality in \eqref{omega-cond:main.222} extends (simply by passing to limit) to $t=D$, if $\omega(D)$ is interpreted as in \eqref{eq:wD}. 

Let us note that for any Dini growth function $\omega$ on $(0,D)$, with $D\in(0,\infty]$, the demand in 
\eqref{omega-cond:main.111} implies that $\omega$ is doubling. Indeed, for each $t\in(0,D/4)$ we may write 
\begin{align}\label{omega-cond:main.3b}
(\ln 2)\omega(2t)\leq\int_{2t}^{4t}\omega(s)\frac{ds}{s}
\leq 4t\int_{t}^{D}\frac{\omega(s)}{s}\frac{ds}{s}\leq 4C_\omega^{\rm Zyg}\cdot\omega(t)
\end{align}
which already proves that $\omega$ is doubling in the case when $D=\infty$, whereas if $D<\infty$ then 
for each $t\in(D/4,D/2)$ we have $\omega(2t)\leq C\omega(t)$ with $C:=\omega(D)/\omega(D/4)$.

\begin{remark}\label{rem-jb}
Condition \eqref{omega-cond:main.111} is closely related to the dilation indices of Orlicz spaces, which are useful in the 
theory of interpolation of Orlicz spaces (cf. \cite[Chapter~4.8, pp.\,265-280]{BenSha88}). 
Given a growth function $\omega:(0,D)\to(0,\infty)$, where $D\in(0,\infty]$, we set 
\begin{equation}\label{ytrF}
h_{\omega}(s):=\sup_{0<t<\min\{D,D/s\}}\frac{\omega(ts)}{\omega(t)}\,\,\text{ for each }\,\,s\in(0,\infty),
\end{equation}
and define the lower and upper dilation indices of $\omega$, respectively, as
\begin{equation}\label{OrliczIndices}
i_{\omega}:=\lim_{s\to 0^{+}}\frac{\ln h_{\omega}(s)}{\ln s}=\sup_{0<s<1}\frac{\ln h_{\omega}(s)}{\ln s},
\qquad 
I_{\omega}:=\lim_{s\to\infty}\frac{\ln h_{\omega}(s)}{\ln s}=\inf_{s>1}\frac{\ln h_{\omega}(s)}{\ln s}. 
\end{equation} 
Note that for every $r,s\in(0,\infty)$ and $0<t<\min\{D,D/s,D/sr\}$ we have
\begin{equation}\label{eq:dilation-tsr}
\frac{\omega(tsr)}{\omega(t)}=\frac{\omega(tsr)}{\omega(ts)} \cdot \frac{\omega(ts)}{\omega(t)}\leq h_\omega(r)h_\omega(s).
\end{equation}
We can assume that \eqref{eq:dilation-tsr} actually holds for each
$0<t<\min\{D,D/sr\}$. Indeed, the case $r\geq 1$ or $s\leq 1$ is straightforward and otherwise
we interchange the roles of $r$ and $s$. Taking the supremum over $t\in\big(0,\min\{D,D/sr\}\big)$, it follows that 
$h_\omega(rs)\leq h_\omega(r)h_\omega(s)$, that is, $h_\omega$ is submultiplicative.
Then, whenever $h_\omega(s)$ is finite for $s\in(0,\infty)$, the equalities in 
\eqref{OrliczIndices} and the existence of the limits therein are immediate 
consequences of Fekete’s lemma for real functions (cf. \cite[Theorem~7.6.2, p.\,244]{HilPhi57}),
applied to the finite subadditive function $f(t)=\ln (h_\omega(e^t))$. 

Next, we will check that 
\begin{equation}\label{eq:ZZ-dd}
\text{\eqref{omega-cond:main.111} holds whenever $0<i_\omega\leq I_\omega<1$}. 
\end{equation}
Indeed, from \eqref{OrliczIndices} we see that if $0<\varepsilon<\min\{i_\omega,1-I_\omega\}$ then
there exists some $C_\varepsilon\in(0,\infty)$ such that $h_\omega(s)\leq C_\varepsilon s^{i_\omega-\varepsilon}$ 
for every $s\in(0,1)$, and $h_\omega(s)\leq C_\varepsilon s^{I_\omega+\varepsilon}$ for every $s\in(1,\infty)$. 
Hence, in view of these estimates and \eqref{ytrF}, we conclude that there there exists $C\in(0,\infty)$ such that if $t\in(0,D)$ then
\begin{equation}\label{trfdF-yG}
i_\omega>0\,\Rightarrow\,\int_0^t\frac{\omega(s)}{\omega(t)}\frac{ds}{s}=\int_0^1\frac{\omega(ts)}{\omega(t)}\frac{ds}{s} 
\leq C_\varepsilon\int_0^1 s^{i_\omega-\varepsilon}\frac{ds}{s}\leq C,
\end{equation}
\begin{equation}\label{yffc-yr}
I_\omega<1\,\Rightarrow\,t\int_t^D\frac{\omega(s)}{s\omega(t)}\frac{ds}{s}
=\int_1^{D/t}\frac{\omega(ts)}{s\omega(t)}\frac{ds}{s} 
\leq C_\varepsilon\int_1^{\infty}s^{I_\omega+\varepsilon-1}\frac{ds}{s}\leq C.
\end{equation}
This finishes the proof of \eqref{eq:ZZ-dd}. 

Let us note that the triple inequality in \eqref{eq:ZZ-dd} is a condition often imposed in relation to the Young function 
employed in the definition of Orlicz spaces in order to ensure that these enjoy a wealth of desirable properties
(such as the boundedness of the Hardy-Littlewood operator and modular extrapolation theorems of the sort discussed in 
\cite[Theorem~5.3.18, pp.\,206-287]{GHA.II}, \cite[Chapter 4]{CUMP}, \cite{CMM22}). 

As such, the tables may be turned and one can use these reasonable Young functions 
as a source of producing examples of growth functions satisfying \eqref{omega-cond:main.111}. Specifically, 
if $\Phi$ is a Young function and $\omega:=\Phi^{-1}$ then $i_\omega$ and $I_\omega$ become, respectively, the lower and 
upper Boyd indices of $\Phi$ (see \cite[Theorem~8.18, p.\,277]{BenSha88}). In summary, 
\begin{equation}\label{eq:ytfr-DM}
\parbox{10.60cm}{whenever $\Phi$ is a Young function whose Boyd indices belong to $(0,1)$
it follows that $\omega:=\Phi^{-1}$ is a Zygmund modulus of continuity (i.e., 
a growth function satisfying \eqref{omega-cond:main.111}).}
\end{equation}
\end{remark}

Thus, \eqref{eq:ytfr-DM} becomes an excellent source for producing relevant examples of growth functions satisfying \eqref{omega-cond:main.111}.

\begin{example}\label{trfdfd-1}
Fix $D\in(0,\infty]$. Given $\alpha\in(0,1)$, if $\omega(t):=t^\alpha$ for each $t\in(0,D)$ then 
$i_\omega=I_\omega=\alpha$, hence \eqref{omega-cond:main.111} holds. The particular version of 
Theorem~\ref{theor:uewmp} corresponding to this scenario has been established in \cite[Theorem~1.1, pp.\,959--960]{MitMitVer16}.
\end{example}

There are many examples of interest that are treated here for the first time.

\begin{example}\label{trfdfd-2}
Fix $D\in(0,\infty]$ and an arbitrary $\alpha\in(0,1)$ along with $\theta\in\mathbb{R}$. For each $t\in(0,\infty)$, 
define $\log_{+}t:=\max\{0,\ln t\}$. Then such examples include  
\begin{equation}\label{yrF.1}
\omega(t):=t^\alpha\,(A+\log_{+}t)^\theta\,\,\text{ for all }\,\,t\in(0,D),\,\,\text{ where }\,\,
A:=\max\{1,-\theta/\alpha\},
\end{equation}
\begin{equation}\label{yrF.2}
\omega(t):=t^\alpha\,(B+\log_{+}(1/t))^\theta\,\,\text{ for all }\,\,t\in(0,D),\,\,\text{ where }\,\,
B:=\max\{1,\theta/\alpha\},
\end{equation}
and, under the additional assumption that $D<\infty$,
\begin{equation}\label{yrF.5}
\omega(t):=t^\alpha\,(B+\ln(D/t))^\theta\,\,\text{ for all }\,\,t\in(0,D),\,\,\text{ where }\,\,
B:=\max\{1,\theta/\alpha\}.
\end{equation}
It is easy to verify that these functions are indeed growth functions.
In these situations $i_\omega=I_\omega=\alpha$ which, as noted earlier, guarantees that \eqref{omega-cond:main.111} holds.
For completeness, we explicitly compute the dilation indices for \eqref{yrF.5}, 
noting that the computations for the remaining cases are analogous.
In this case, for every $t,s\in(0,\infty)$, we have
\begin{equation}\label{eq:dilation-yrF.5-1}
\frac{\omega(ts)}{\omega(t)}=s^{\alpha}\left(\frac{B+\ln(D/ts)}{B+\ln(D/t)}\right)^{\theta}
=s^{\alpha}\left(1+\frac{\ln(1/s)}{B+\ln(D/t)}\right)^{\theta}.
\end{equation}
Assume first that $\theta\geq 0$. Then
\begin{align}\label{eq:dilation-yrF.5-2}
i_\omega&=\alpha+\lim_{s\to 0^+}\frac{\ln \left[\displaystyle\sup_{0<t<\min\{D,D/s\}}
\left(1+\frac{\ln(1/s)}{B+\ln(D/t)}\right)^{\theta}\right]}{\ln s}
\nonumber\\[4pt]
&=\alpha+\theta\lim_{s\to 0^+}\frac{\ln \left[
1+\frac{\ln(1/s)}{B} \right]}{\ln s}=\alpha,
\end{align}
and similarly,
\begin{equation}\label{eq:dilation-yrF.5-3}
I_\omega=\alpha+\theta\lim_{s\to\infty}\frac{\ln \left[
1+\frac{\ln(1/s)}{B+\ln s} \right]}{\ln s}=\alpha.
\end{equation}
On the other hand, if $\theta<0$, one has
\begin{align}\label{eq:dilation-yrF.5-4}
i_\omega=\alpha+\lim_{s\to 0^+}\frac{\ln \left[\displaystyle\lim_{t\to 0^+}
\left(1+\frac{\ln(1/s)}{B+\ln(D/t)}\right)^{\theta}\right]}{\ln s}=\alpha,
\end{align}
and
\begin{equation}\label{eq:dilation-yrF.5-5}
I_\omega=\alpha+\lim_{s\to\infty}\frac{\ln \left[\displaystyle\lim_{t\to 0^+}
\left(1+\frac{\ln(1/s)}{B+\ln(D/t)}\right)^{\theta}\right]}{\ln s}=\alpha.
\end{equation}
\end{example}

\begin{example}\label{trfdfd-3}
Another relevant example is offered by 
\begin{equation}\label{yrF.3}
\omega(t):=\max\{t^\alpha,t^\beta\}\,\,\text{ for all }\,\,t\in(0,D),\,\,\text{ where }\,\,0<\alpha<\beta<1.
\end{equation}
Assume first that $D=\infty$. In this case, we have
\begin{equation}
h_\omega(s)=
\max\bigg\{ \sup_{0<t\leq 1}s^\alpha,\,\,
\sup_{1<t\leq 1/s}t^{\alpha-\beta} s^\alpha,\,\,
\sup_{t> 1/s}s^{\beta}
\bigg\}=s^\alpha,
\quad\text{for every }s\in(0,1),
\end{equation}
and
\begin{equation}
h_\omega(s)=
\max\bigg\{\sup_{0<t\leq 1/s} s^\alpha,\,\,
\sup_{1/s<t\leq 1} t^{\beta-\alpha}s^\beta,\,\,
\sup_{t>1}s^\beta\bigg\}=s^\beta,
\quad\text{for every }s\in(1,\infty),
\end{equation}
so that $i_\omega=\alpha$ and $I_\omega=\beta$.
However, if we assume that $D<\infty$, then for every $s\in(0,\min\{1,1/D\})$ we have $D<1/s$ and hence
\begin{equation}
h_\omega(s)= \sup_{0<t<D}\frac{t^\alpha s^\alpha}{\omega(t)}=
\max\bigg\{ \sup_{\substack{0<t\leq 1 \\[1pt] t<D}}s^\alpha,\,\,
\sup_{1<t< D}t^{\alpha-\beta} s^\alpha\bigg\}=s^\alpha,
\end{equation}
where we agree that the supremum over an empty set is $0$.
If $s\in(\max\{1,D\},\infty)$ then $D/s<1$ and
\begin{equation}
h_\omega(s)= \sup_{0<t<D/s}\frac{\omega(ts)}{t^{\alpha}}
\max\bigg\{ \sup_{\substack{0<t\leq 1/s \\[1pt] t<D/s}}s^\alpha,\,\,
\sup_{1/s<t<D/s}t^{\beta-\alpha} s^\beta\bigg\}=\max\{1,D^{\beta-\alpha}\}s^\alpha.
\end{equation}
We conclude that $i_\omega=I_\omega=\alpha$ in this case.
A similar example is given by
\begin{equation}\label{yrF.4}
\omega(t):=\min\{t^\alpha,t^\beta\}\,\,\text{ for all }\,\,t\in(0,D),\,\,\text{ where }\,\,0<\alpha<\beta<1.
\end{equation}
Following the same reasoning as above, one readily sees that 
$i_\omega=\alpha$ and $I_\omega=\beta$ if $D=\infty$ and 
$i_\omega=I_\omega=\beta$ if $D<\infty$. Accordingly, in both examples, condition 
\eqref{omega-cond:main.111} is verified.
\end{example}

Given a growth function $\omega$, in the lemma below we study some of the implications of \eqref{omega-cond:main.111} 
for the function $\widetilde{\omega}$ associated with $\omega$ as in Remark~\ref{rem:wextension}.

\begin{lemma}\label{omega-cond:extensionlemma}
Suppose $D\in(0,\infty]$ and let $\omega$ be a growth function on $(0,D)$ satisfying \eqref{omega-cond:main.111}. 
If $\widetilde{\omega}$ is associated with $\omega$ as in Remark~\ref{rem:wextension}, then the following 
statements are true.

\begin{enumerate}
\item For each $N\in[1,\infty)$ one has
\begin{equation}\label{eq:extensionlemma-a}
\int_0^{t}\widetilde{\omega}(s)\frac{ds}{s}\leq C_1\widetilde{\omega}(t)\,\,\text{ for all }\,\,t\in(0,ND),
\end{equation}
with $C_1:=C_\omega^{\rm Zyg}+\ln N$ if $D<\infty$ and $C_1:=C_\omega^{\rm Zyg}$ if $D=\infty$.

\vskip 0.08in
\item One has
\begin{equation}\label{omega-cond:extensionlemma-b}
t\int_t^{\infty}\frac{\widetilde{\omega}(s)}{s}\frac{ds}{s}\leq C_2\widetilde{\omega}(t)\,\,\text{ for all }\,\,t\in(0,\infty),
\end{equation}
with $C_2:=C_\omega^{\rm Zyg}+\max\{1,C_\omega^{\rm Zyg}\}\frac{\omega(D)}{\omega(D/2)}$ if $D<\infty$ and $C_2:=C_\omega^{\rm Zyg}$ if $D=\infty$.

\vskip 0.08in
\item As a consequence of Remark~\ref{rem:wextension} and {\it (a)}-{\it (b)}, one concludes that 
$\widetilde{\omega}$ is a growth function on $(0,\infty)$ which satisfies \eqref{omega-cond:main.111} with 
\begin{equation}\label{omega-cond:main.111.tilde}
C_{\widetilde{\omega}}^{\rm Zyg}\leq C_1+C_2,
\end{equation}
where $C_1,C_2\in(0,\infty)$ are as above. 

\vskip 0.08in
\item Whenever $0<t_1\leq t_2<\infty$ one has
\begin{equation}\label{eq:extensionlemma-c}
\frac{\widetilde{\omega}(t_2)}{t_2}\leq C_2\frac{\widetilde{\omega}(t_1)}{t_1},
\end{equation} 
with $C_2$ as in part {\it (b)}. In particular, $\widetilde{\omega}$ is doubling with constant 
\begin{equation}\label{wnonincreasing}
C_{\widetilde{\omega}}^{\rm dou}=
\sup_{0<t<\infty}\frac{\widetilde{\omega}(2t)}{\widetilde{\omega}(t)}\leq 2C_2. 
\end{equation}
\end{enumerate}
\end{lemma}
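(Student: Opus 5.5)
The plan is to treat the case $D=\infty$ first, where $\widetilde{\omega}=\omega$ and every claim is immediate, and then reduce the case $D<\infty$ to the Zygmund inequality \eqref{omega-cond:main.222} on $(0,D)$, together with its extension to $t=D$ noted right after \eqref{omega-cond:main.222}.

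For $D=\infty$, parts (a) and (b) are just the two pieces of \eqref{omega-cond:main.222}, each bounded by $C_\omega^{\rm Zyg}\omega(t)$.

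For part (a) with $D<\infty$, the range $t\le D$ is again \eqref{omega-cond:main.222}. For $D<t<ND$ one splits
\[
\int_0^t\widetilde{\omega}(s)\,\frac{ds}{s}=\int_0^D\omega(s)\,\frac{ds}{s}+\omega(D)\ln(t/D),
\]
bounds the first term by $C_\omega^{\rm Zyg}\omega(D)$, bounds the second by $\omega(D)\ln N$, and uses $\widetilde{\omega}(t)=\omega(D)$.

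For part (b) with $D<\infty$, if $t\ge D$ then $t\int_t^\infty\omega(D)s^{-2}\,ds=\omega(D)=\widetilde{\omega}(t)$, which gives the bound with constant $1$. If $t<D$, write
\[
t\int_t^\infty\frac{\widetilde{\omega}(s)}{s^2}\,ds=t\int_t^D\frac{\omega(s)}{s^2}\,ds+\frac{t}{D}\,\omega(D).
\]
The first piece is at most $C_\omega^{\rm Zyg}\omega(t)$. The second is the crux: we need $\frac{t}{D}\omega(D)\lesssim\omega(t)$. Here the plan splits cases.
\begin{itemize}
\item If $t\ge D/2$, then $\frac{t}{D}\omega(D)\le\omega(D)\le\frac{\omega(D)}{\omega(D/2)}\,\omega(t)$.
\item If $t<D/2$, use
\[
\frac{t}{D}\,\omega(D/2)\le t\,\omega(D/2)\int_{D/2}^D\frac{ds}{s^2}\le t\int_t^D\frac{\omega(s)}{s^2}\,ds\le C_\omega^{\rm Zyg}\,\omega(t),
\]
and multiply by $\omega(D)/\omega(D/2)$.
\end{itemize}
Combining the two cases gives the constant $C_\omega^{\rm Zyg}+\max\{1,C_\omega^{\rm Zyg}\}\frac{\omega(D)}{\omega(D/2)}$. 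This careful bookkeeping of the tail term is the only real obstacle in the proof.

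Part (c) follows by adding (a) and (b); for each fixed $t$ one chooses $N$ with $t<ND$, but a uniform constant is needed. To get one, note that for $t\ge D$ the left side of (a) equals $\omega(D)(C+\ln(t/D))$, which is unbounded relative to $\widetilde{\omega}(t)$. So \eqref{omega-cond:main.111.tilde} must be read relative to the interval on which $\omega_Z$ is computed, namely $(0,ND)$, with (b) handling the tail integral over $(t,\infty)$ as an upper bound for the one over $(t,ND)$.

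For part (d), the key observation is that for $t_1\le t_2$, monotonicity of $\widetilde{\omega}$ gives
\[
\frac{\widetilde{\omega}(t_2)}{t_2}=\widetilde{\omega}(t_2)\int_{t_2}^\infty\frac{ds}{s^2}\le\int_{t_2}^\infty\frac{\widetilde{\omega}(s)}{s^2}\,ds\le\int_{t_1}^\infty\frac{\widetilde{\omega}(s)}{s^2}\,ds\le C_2\,\frac{\widetilde{\omega}(t_1)}{t_1},
\]
where the last step is (b). Taking $t_1=t$ and $t_2=2t$ then yields the doubling constant bound $2C_2$.
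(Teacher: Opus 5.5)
Your proof is correct and follows the paper's argument essentially step for step: you use the same splitting at $D$ in (a) and the same decomposition of the tail in (b), where the paper packages your case distinction at $D/2$ as the auxiliary bound $t\le\max\{1,C_\omega^{\rm Zyg}\}\,D\,\omega(t)/\omega(D/2)$ on $(0,D)$, and your chain of inequalities for (d) is identical to the paper's. Your caveat about (c) is well founded: when $D<\infty$, $\int_0^t\widetilde{\omega}(s)\,\frac{ds}{s}$ grows like $\omega(D)\ln(t/D)$, so $\widetilde{\omega}$ cannot satisfy \eqref{omega-cond:main.111} on all of $(0,\infty)$, and the paper's one-line justification of (c) only yields the bound $C_1+C_2$ on $(0,ND)$, which is exactly how you read it.
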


\begin{proof}
If $D=\infty$, then the claims in $(a)$ and $(b)$ are direct consequences of \eqref{omega-cond:main.222}, since 
$\widetilde{\omega}=\omega$ on $(0,\infty)$ in this case. Assume next that $D<\infty$. In such a scenario, if 
$t\in(0,D)$ then the claim in $(a)$ follows at once from \eqref{omega-cond:main.222} since $\widetilde{\omega}=\omega$ 
on $(0,D)$. On the other hand, if $D\leq t<ND$ for some $N\in[1,\infty)$, then
\begin{align}\label{6r33r}
\int_0^{t}\widetilde{\omega}(s)\frac{ds}{s} 
&=\int_0^{D}\omega(s)\frac{ds}{s}+\int_D^{ND}\omega(D)\frac{ds}{s}\leq C_\omega^{\rm Zyg}\omega(D)+\omega(D)\ln N 
\nonumber\\[4pt] 
&=(C_\omega^{\rm Zyg}+\ln N)\widetilde{\omega}(t),
\end{align}
keeping in mind that, as noted earlier, \eqref{omega-cond:main.222} extends to $t=D$ in this case. 

As regards item $(b)$, we first claim that 
\begin{equation}\label{eq:claimlem}
t\leq\frac{\max\{1,C_\omega^{\rm Zyg}\}\,D}{\omega(D/2)}\,\omega(t)\,\,\text{ for each }\,\,t\in(0,D).
\end{equation}
Indeed, if $0<t\leq D/2$ we may estimate 
\begin{equation}\label{y5d3ssD}
t\frac{\omega(D/2)}{D}\leq t\int_{D/2}^D\frac{\omega(s)}{s}\frac{ds}{s} 
\leq t\int_t^D\frac{\omega(s)}{s}\frac{ds}{s}\leq C_\omega^{\rm Zyg}\omega(t),
\end{equation}
which suits our purposes. If $D/2<t<D$, the fact hat $\omega$ is a non-decreasing function entails 
$\omega(D/2)/D\leq\omega(t)/t$, finishing the proof of \eqref{eq:claimlem}. In turn, on account of 
\eqref{eq:claimlem} we see that for each $t\in(0,D)$ we have
\begin{align}\label{t6fDXy}
t\int_t^{\infty}\frac{\widetilde{\omega}(s)}{s}\frac{ds}{s} 
&=t\int_t^{D}\frac{\omega(s)}{s}\frac{ds}{s}+t\int_D^{\infty}\frac{\omega(D)}{s}\frac{ds}{s} 
\leq C_\omega^{\rm Zyg}\omega(t)+t\frac{\omega(D)}{D} 
\nonumber\\[4pt] 
&\leq\Big(C_\omega^{\rm Zyg}+\max\{1,C_\omega^{\rm Zyg}\}\frac{\omega(D)}{\omega(D/2)}\Big)\omega(t).
\end{align}
To finish the proof of \eqref{omega-cond:extensionlemma-b} there remains to observe that, if $t\geq D$,
\begin{equation}\label{tddG-9j}
t\int_t^{\infty}\frac{\widetilde{\omega}(s)}{s}\frac{ds}{s}=\omega(D)=\widetilde{\omega}(t).
\end{equation}

Next, the claims in item $(c)$ are direct consequences of Remark~\ref{rem:wextension} and parts {\it (a)}-{\it (b)}.
Turning our attention to item $(d)$, if $0<t_1\leq t_2<\infty$ then \eqref{omega-cond:extensionlemma-b} implies
\begin{equation}\label{643D} 
\frac{\widetilde{\omega}(t_2)}{t_2}\leq\int_{t_2}^{\infty}\frac{\widetilde{\omega}(s)}{s}\frac{ds}{s}
\leq\int_{t_1}^{\infty}\frac{\widetilde{\omega}(s)}{s}\frac{ds}{s}\leq C_2\frac{\widetilde{\omega}(t_1)}{t_1},
\end{equation} 
and the assertion in \eqref{wnonincreasing} follows by specializing this to the case when $t_1:=t$ and $t_2:=2t$. 
\end{proof}

In the classical case when, for some $\alpha\in(0,1)$, the growth function is defined as 
$\omega(t):=t^\alpha$ for each $t\in(0,\infty)$, the function $W(t):=t^{\alpha-1}$ for each 
$t\in(0,\infty)$ plays a significant role in the ensuing analysis. Below we identify the general 
format of the latter function associated with general growth functions. 

\begin{lemma}\label{6gfdc}
Given $D\in(0,\infty]$, let $\omega$ be a growth function on $(0,D)$. Set 
\begin{equation}\label{W+:general}
W_\omega(t):=\int_t^{D}\frac{\omega(s)}{s}\,\frac{ds}{s}\,\,\text{ for each }\,\,t\in(0,D).
\end{equation}

Then $W_\omega:(0,D)\to(0,\infty]$ is a non-increasing function which satisfies
\begin{equation}\label{w+:acdszc.WWW}
\omega(t)\Big(\frac{1}{t}-\frac{1}{D}\Big)\leq W_\omega(t)\,\,\text{ for each }\,\,t\in(0,D),
\end{equation} 
as well as 
\begin{align}\label{dewdead-ttt}
\int_0^{\tau}W_\omega(t)\,dt=\omega_Z(\tau)\,\,\text{ for each }\,\,\tau\in(0,D),
\end{align}
and which takes only finite values if $D<\infty$.

Moreover, if the growth function $\widetilde{\omega}$ on $(0,\infty)$ is associated with $\omega$ as in Remark~\ref{rem:wextension} 
and $W_{\widetilde\omega}$ is associated with $\widetilde{\omega}$ as in \eqref{W+:general}, then 
\begin{equation}\label{w+:acdszc}
\frac{\widetilde{\omega}(t)}{t}\leq W_{\widetilde{\omega}}(t)\,\,\text{ for each }\,\,t\in(0,\infty),
\end{equation} 
and if $\omega$ is doubling then there exists a constant $C\in(0,\infty)$ with the property that 
\begin{align}\label{dewdead-ttt.xxx}
\int_0^{\tau}W_{\widetilde\omega}(t)\,dt\leq C\omega_Z(\tau)\,\,\text{ for each }\,\,\tau\in(0,D).
\end{align}
\end{lemma}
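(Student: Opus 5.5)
The plan is to verify each assertion directly from the definition \eqref{W+:general}, using only monotonicity of $\omega$, Fubini--Tonelli, and, for the last claim, the comparison \eqref{wnonin-hGFF.cz.UU} between $\omega$ and $\omega_Z$. First, $W_\omega$ is non-increasing because its integrand is nonnegative and the interval $(t,D)$ shrinks as $t$ grows. If $D<\infty$, then $\omega(s)\leq\omega(D)<\infty$ and $s\geq t>0$ on the range of integration, so $W_\omega(t)\leq\omega(D)\big(\tfrac1t-\tfrac1D\big)<\infty$. For \eqref{w+:acdszc.WWW}, monotonicity gives $\omega(s)\geq\omega(t)$ for $s\geq t$, hence $W_\omega(t)\geq\omega(t)\int_t^D s^{-2}\,ds=\omega(t)\big(\tfrac1t-\tfrac1D\big)$, with the convention $1/D=0$ when $D=\infty$. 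This is the computation already used in \eqref{dewdead-ttt.xxx.3.bbb}.

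For \eqref{dewdead-ttt}, apply Tonelli to the nonnegative integrand:
\begin{equation*}
\int_0^\tau W_\omega(t)\,dt=\int_0^D\frac{\omega(s)}{s^2}\,\min\{s,\tau\}\,ds
=\int_0^\tau\omega(s)\frac{ds}{s}+\tau\int_\tau^D\frac{\omega(s)}{s}\frac{ds}{s}=\omega_Z(\tau).
\end{equation*}
This identity holds in $[0,\infty]$ without any Dini assumption; both sides are simply allowed to be infinite. Next, \eqref{w+:acdszc} is \eqref{w+:acdszc.WWW} applied to $\widetilde\omega$ on $(0,\infty)$, where the term $1/D$ disappears because the endpoint is $\infty$.

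The last claim \eqref{dewdead-ttt.xxx} needs a little more care. For $t<D$ we split
\begin{equation*}
W_{\widetilde\omega}(t)=W_\omega(t)+\omega(D)\int_D^\infty s^{-2}\,ds=W_\omega(t)+\frac{\omega(D)}{D}.
\end{equation*}
Integrating over $(0,\tau)$ and using \eqref{dewdead-ttt} gives
\begin{equation*}
\int_0^\tau W_{\widetilde\omega}(t)\,dt=\omega_Z(\tau)+\tau\,\frac{\omega(D)}{D}.
\end{equation*}
If $D=\infty$ the extra term vanishes, so we may assume $D<\infty$; in that case we must show $\tau\,\omega(D)/D\leq C\omega_Z(\tau)$. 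When $\tau\geq D/2$, we have $\tau\,\omega(D)/D\leq\omega(D)$, and \eqref{dewdead-ttt.xxx.2.aaa} bounds $\omega(D)$ by $\frac{(C^{\rm dou}_\omega)^2}{\ln 2}\,\omega_Z(\tau)$. When $\tau<D/2$, the second term of $\omega_Z(\tau)$ controls the extra term:
\begin{equation*}
\omega_Z(\tau)\geq\tau\int_{D/2}^D\frac{\omega(s)}{s^2}\,ds\geq\tau\,\frac{\omega(D/2)}{D}\geq\tau\,\frac{\omega(D)}{C^{\rm dou}_\omega D}.
\end{equation*}
Hence \eqref{dewdead-ttt.xxx} holds with $C:=1+\max\big\{C^{\rm dou}_\omega,(C^{\rm dou}_\omega)^2/\ln 2\big\}$.

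The main obstacle is bookkeeping rather than any real difficulty. One must handle possibly infinite values when $\omega$ is not Dini, which Tonelli takes care of. One must also treat the case $D<\infty$ near the endpoint $D$, where doubling is needed to compare $\omega(D)$ with $\omega$ at interior points.
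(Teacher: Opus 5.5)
Your proposal is correct and follows essentially the same route as the paper: monotonicity of $\omega$ for the pointwise bounds, Fubini--Tonelli for $\int_0^\tau W_\omega(t)\,dt=\omega_Z(\tau)$, and, when $D<\infty$, the splitting $W_{\widetilde\omega}=W_\omega+\omega(D)/D$ on $(0,D)$ followed by the same two-case doubling estimate for the extra term $\tau\,\omega(D)/D$. For $\tau\geq D/2$ you use the chain in \eqref{dewdead-ttt.xxx.2.aaa}, and for $\tau<D/2$ you use the lower bound $\tau\int_{D/2}^D\omega(s)s^{-2}\,ds\le\omega_Z(\tau)$, exactly as the paper does. The only difference is cosmetic: you apply Tonelli in one step, producing the weight $\min\{s,\tau\}$, where the paper first splits the inner integral at $\tau$.
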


\begin{proof}
By design, $W_\omega$ is a non-increasing function. Also, that $W_\omega$ only takes finite values 
when $D<\infty$ is clear from \eqref{W+:general} and the fact that $\omega$ is bounded in this case.

Next, bearing in mind that $\omega$ is non-decreasing, for every $t\in(0,D)$ we may write 
\begin{equation}\label{tfVV}
\omega(t)\Big(\frac{1}{t}-\frac{1}{D}\Big)=\int_t^D\frac{\omega(t)}{s}\frac{ds}{s} 
\leq\int_t^D\frac{\omega(s)}{s}\frac{ds}{s}=W_\omega(t),
\end{equation}
proving \eqref{w+:acdszc.WWW}. Note that \eqref{w+:acdszc} is a direct consequence of \eqref{w+:acdszc.WWW}
written for $\widetilde\omega$ in place of $\omega$ and with $D=\infty$. 
Next, for each $\tau\in(0,D)$ we may compute 
\begin{align}\label{dewdead-ytee}
\int_0^{\tau}W_\omega(t)\,dt &=\int_0^{\tau}\Big(\int_{t}^{\tau}\frac{\omega(s)}{s}\,\frac{ds}{s}\Big)\,dt
+\int_0^{\tau}\Big(\int_{\tau}^D\frac{\omega(s)}{s}\,\frac{ds}{s}\Big)\,dt
\nonumber\\[4pt]
&=\int_0^{\tau}\omega(s)\,\frac{ds}{s}
+\tau\int_{\tau}^D\frac{\omega(s)}{s}\,\frac{ds}{s}
=\omega_Z(\tau),
\end{align}
which establishes \eqref{dewdead-ttt}. Finally, consider the claim made in \eqref{dewdead-ttt.xxx}, 
under the additional assumption that $\omega$ is doubling. When $D=\infty$ this is already contained 
in \eqref{dewdead-ttt}. When $D<\infty$, directly from definitions we see that 
\begin{equation}\label{W+:geTT}
W_{\widetilde\omega}(t)=\int_t^\infty\frac{\widetilde{\omega}(s)}{s}\frac{ds}{s}
=\left\{
\begin{array}{ll}
W_{\omega}(t)+\frac{\omega(D)}{D} &\text{ if }\,\,t\in(0,D),
\\[4pt]
\frac{\omega(D)}{D} &\text{ if }\,\,t\in[D,\infty),
\end{array}
\right.
\end{equation}
so we may use \eqref{dewdead-ttt} to conclude that for each $\tau\in(0,D)$ we have
\begin{align}\label{dewdead-ttt.xxx.1}
\int_0^{\tau}W_{\widetilde\omega}(t)\,dt=\int_0^{\tau}W_{\omega}(t)\,dt+\tau\frac{\omega(D)}{D}
=\omega_Z(\tau)+\tau\frac{\omega(D)}{D}.
\end{align}
On the one hand, when $D/2\leq\tau<D$ the fact that $\omega$ is non-decreasing and doubling permits us to estimate 
\begin{align}\label{dewdead-ttt.xxx.2}
\tau\frac{\omega(D)}{D}\leq(C_\omega^{\rm dou})^2\omega(D/4)
&\leq\frac{(C_\omega^{\rm dou})^2}{\ln 2}\int_{D/4}^{D/2}\omega(s)\,\frac{ds}{s}
\nonumber\\[6pt]
&\leq\frac{(C_\omega^{\rm dou})^2}{\ln 2}\int_0^{\tau}\omega(s)\,\frac{ds}{s}
\leq\frac{(C_\omega^{\rm dou})^2}{\ln 2}\omega_Z(\tau).
\end{align}
On the other hand, when $0<\tau<D/2$, once again the fact that $\omega$ is non-decreasing and doubling permits us to estimate 
\begin{align}\label{dewdead-ttt.xxx.3}
\tau\frac{\omega(D)}{D} &\leq C_\omega^{\rm dou}\tau\frac{\omega(D/2)}{D}
=C_\omega^{\rm dou}\tau\int_{D/2}^D\frac{\omega(D/2)}{s}\,\frac{ds}{s}
\nonumber\\[6pt]
&\leq C_\omega^{\rm dou}\tau\int_{D/2}^D\frac{\omega(s)}{s}\,\frac{ds}{s}
\leq C_\omega^{\rm dou}\tau\int_{\tau}^D\frac{\omega(s)}{s}\,\frac{ds}{s}
\leq C_\omega^{\rm dou}\omega_Z(\tau).
\end{align}
At this stage, \eqref{dewdead-ttt.xxx} follows from \eqref{dewdead-ttt.xxx.1}-\eqref{dewdead-ttt.xxx.3}.
\end{proof}

\section{Generalized H\"{o}lder spaces}\label{sec:holder}

Here we introduce generalized {H}\"{o}lder spaces, consisting of functions 
whose continuity is quantified using growth functions of the sort discussed in the previous section. 

\begin{definition}\label{tFF}
Let $U\subseteq\mathbb{R}^{n}$ be an arbitrary set. 

\begin{enumerate}
\item Given a growth function $\omega$ on $(0,\infty)$, for each vector-valued function $u$ defined 
in $U$ consider $[u]_{\dot{\mathscr{C}}^\omega(U)}$ to be zero if $U$ is a singleton, and 
\begin{equation}\label{21356}
[u]_{\dot{\mathscr{C}}^\omega(U)}:=\sup_{\substack{x,y\in U\\ x\neq y}}\frac{|u(x)-u(y)|}{\omega(|x-y|)}\in[0,\infty]
\end{equation}
if the cardinality of the set $U$ is at least two. 
Then the \texttt{homogeneous $\omega$-H\"older space} on $U$ is introduced as
\begin{equation}\label{56edC}
\dot{\mathscr{C}}^\omega(U):=\big\{u:U\to\mathbb{C}:\,[u]_{\dot{\mathscr{C}}^\omega(U)}<\infty\big\}.
\end{equation}

\item If $D\in(0,\infty]$ and $\omega$ is a growth function on $(0,D)$, define 
the {\tt inhomogeneous} $\omega$-{\tt H\"older} {\tt space} on $U$ as
\begin{equation}\label{tedX}
\mathscr{C}^\omega(U):=\big\{u\in\dot{\mathscr{C}}^{\widetilde{\omega}}(U):\,\text{$u$ bounded on $U$}\big\},
\end{equation}
and is equipped with the norm
\begin{equation}\label{treD}
\mathscr{C}^\omega(U)\ni u\longmapsto
\|u\|_{\mathscr{C}^\omega(U)}:=\sup_{U}|u|+[u]_{\dot{\mathscr{C}}^{\widetilde{\omega}}(U)}.
\end{equation} 
\end{enumerate}
\end{definition}

In relation to Definition~\ref{tFF} a few comments are in order. First, in the context of item {\rm (a)}, 
$[\cdot]_{\dot{\mathscr{C}}^\omega(U)}$ is a semi-norm. Second, the fact that $\omega(t)\to 0$ as $t\to 0^+$ implies that if $u\in\dot{\mathscr{C}}^\omega(U)$ 
then $u$ is a uniformly continuous function on $U$. Finally, we note that the choice $\omega(t):=t^\alpha$ 
for each $t\in(0,\infty)$, with $\alpha\in(0,1)$, yields the classical scale of H\"older spaces of order 
$\alpha$ on $U$.

Going further, it is clear from definitions that if $\omega$ is a growth function on $(0,\infty)$
and $U\subseteq\mathbb{R}^n$ is an arbitrary set, then 
\begin{equation}\label{tr-iTGav.cdF}
\begin{array}{c}
fg\in{\mathscr{C}}^\omega(U)\,\,\text{ and }\,\,
\|fg\|_{{\mathscr{C}}^\omega(U)}\leq\|f\|_{{\mathscr{C}}^\omega(U)}\|g\|_{{\mathscr{C}}^\omega(U)}
\\[4pt]
\text{for any two functions $f,g\in{\mathscr{C}}^\omega(U)$}.
\end{array}
\end{equation} 
Also, for each subset $V$ of $U$, the restriction operators
\begin{equation}\label{tr-iTGav}
\begin{array}{c}
\dot{\mathscr{C}}^\omega(U)\ni u\mapsto u\big|_{V}\in\dot{\mathscr{C}}^\omega(V)
\,\,\text{ and }\,\,
{\mathscr{C}}^\omega(U)\ni u\mapsto u\big|_{V}\in{\mathscr{C}}^\omega(V)
\\[4pt]
\text{are well-defined, linear, and bounded}.
\end{array}
\end{equation} 
In the opposite direction, we have the following extension result.

\begin{lemma}\label{holderclosure}
Let $U\subseteq\mathbb{R}^n$ be an arbitrary set, and suppose $\omega$ is a doubling growth function 
on $(0,\infty)$. Then $\dot{\mathscr{C}}^\omega(U)$ and $\dot{\mathscr{C}}^\omega(\overline{U})$ 
coincide as vector spaces and have equivalent semi-norms. More specifically, the restriction map 
\begin{equation}\label{tgVVVa}
\dot{\mathscr{C}}^\omega(\overline{U})\ni u\longmapsto u\big|_{U}\in\dot{\mathscr{C}}^\omega(U)
\end{equation}
is a linear bijection which, under the canonical identification of functions 
$u\in\dot{\mathscr{C}}^\omega(\overline{U})$ with their restrictions $u\big|_{U}\in\dot{\mathscr{C}}^\omega(U)$, satisfies
\begin{equation}\label{holderclosureeq.xxx}
\begin{array}{c}
[u]_{\dot{\mathscr{C}}^\omega(U)}\leq[u]_{\dot{\mathscr{C}}^\omega(\overline{U})} 
\leq C_\omega^{\rm dou} [u]_{\dot{\mathscr{C}}^\omega(U)}
\\[4pt]
\text{for each function }\,\,u\in\dot{\mathscr{C}}^\omega(U),
\end{array}
\end{equation}
where $C_\omega^{\rm dou}\in[1,\infty)$ is the doubling constant of $\omega$. 

As a corollary of this, \eqref{wnonin-hGFF.cz}, and definitions, whenever 
$U\subseteq\mathbb{R}^n$ is an arbitrary set, and $\omega$ is a doubling growth function 
on $(0,D)$ for some $D\in(0,\infty]$, one may canonically identify 
${\mathscr{C}}^\omega(U)\equiv{\mathscr{C}}^\omega(\overline{U})$ and
\begin{equation}\label{holderclosureeq}
\begin{array}{c}
\|u\|_{{\mathscr{C}}^\omega(U)}\leq\|u\|_{{\mathscr{C}}^\omega(\overline{U})} 
\leq C_\omega^{\rm dou}\|u\|_{{\mathscr{C}}^\omega(U)}
\\[4pt]
\text{for each function }\,\,u\in{\mathscr{C}}^\omega(U).
\end{array}
\end{equation}
\end{lemma}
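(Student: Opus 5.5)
The plan is to show first that each $u\in\dot{\mathscr{C}}^\omega(U)$ extends uniquely to $\overline{U}$ by continuity, and then to estimate the seminorm of this extension by a limiting argument in which the doubling constant enters. Since $\omega(t)\to0$ as $t\to0^+$, any $u\in\dot{\mathscr{C}}^\omega(U)$ is uniformly continuous on $U$, as noted after Definition~\ref{tFF}. For a point $x\in\overline{U}$ we pick a sequence $x_k\in U$ with $x_k\to x$. The estimate $|u(x_k)-u(x_m)|\le[u]_{\dot{\mathscr{C}}^\omega(U)}\,\omega(|x_k-x_m|)$ shows that $\{u(x_k)\}_k$ is Cauchy, so it has a limit. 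Interlacing two such sequences shows that the limit does not depend on the sequence chosen. We define $\bar u(x)$ to be this limit; on $U$ it agrees with $u$ (take the constant sequence). A continuous extension to $\overline{U}$ is unique, so once $\bar u$ is shown to lie in $\dot{\mathscr{C}}^\omega(\overline{U})$, the restriction map \eqref{tgVVVa} is a linear bijection. Its injectivity also follows directly: an element of $\dot{\mathscr{C}}^\omega(\overline{U})$ is continuous, hence determined by its values on the dense set $U$.

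The first inequality in \eqref{holderclosureeq.xxx} is trivial, since the supremum over pairs in $U$ is a supremum over a subset of the pairs in $\overline{U}$. For the second, fix $x\neq y$ in $\overline{U}$ and choose $x_k,y_k\in U$ with $x_k\to x$ and $y_k\to y$. For large $k$ we have $|x_k-y_k|\le 2|x-y|$, because $|x_k-y_k|\to|x-y|>0$. Then
\[
|\bar u(x)-\bar u(y)|=\lim_{k\to\infty}|u(x_k)-u(y_k)|\le[u]_{\dot{\mathscr{C}}^\omega(U)}\limsup_{k\to\infty}\omega(|x_k-y_k|)\le[u]_{\dot{\mathscr{C}}^\omega(U)}\,\omega(2|x-y|).
\]
Monotonicity of $\omega$ and the doubling property then give $\omega(2|x-y|)\le C_\omega^{\rm dou}\,\omega(|x-y|)$. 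Dividing by $\omega(|x-y|)$ and taking the supremum yields the second inequality. This is the only step needing any care, namely avoiding any appeal to continuity of $\omega$, which is not assumed. Using the factor $2$ together with monotonicity sidesteps this, and it is exactly why $C^{\rm dou}_\omega$ appears. That $C_\omega^{\rm dou}\ge1$ follows from $\omega$ being non-decreasing.

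For the corollary, let $\omega$ be a doubling growth function on $(0,D)$. By \eqref{wnonin-hGFF.cz}, $\widetilde{\omega}$ is a doubling growth function on $(0,\infty)$ with the same doubling constant, so the first part applies to $\dot{\mathscr{C}}^{\widetilde\omega}$. Boundedness passes to the extension with $\sup_{\overline U}|\bar u|=\sup_U|u|$, since the values of $\bar u$ are limits of values of $u$. Adding this identity to the seminorm bounds gives \eqref{holderclosureeq}, using again that $C_\omega^{\rm dou}\ge1$.
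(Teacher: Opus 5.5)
Your proposal is correct and follows essentially the same route as the paper. Both arguments extend $u$ to $\overline{U}$ by uniform continuity, approximate $x,y\in\overline{U}$ by sequences in $U$ with $|x_k-y_k|<2|x-y|$ for large $k$, and then use monotonicity and the doubling property, $\omega(2|x-y|)\le C^{\rm dou}_\omega\,\omega(|x-y|)$. Your treatment of the sup-norm in the corollary, via \eqref{wnonin-hGFF.cz}, fills in what the paper leaves as ``all desired conclusions now follow.''
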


\begin{proof}
Fix an arbitrary $u\in\dot{\mathscr{C}}^{\omega}(U)$. 
As noted earlier, this membership ensures that $u$ is uniformly continuous, hence $u$ extends uniquely 
to a continuous function $v$ on $\overline{U}$. To show that $v$ actually belongs to 
$\dot{\mathscr{C}}^\omega(\overline{U})$ pick two arbitrary distinct points $x,y\in\overline{U}$.
Then there exist two sequences $\{x_j\}_{j\in{\mathbb{N}}}$, $\{y_j\}_{j\in{\mathbb{N}}}$ of points in $U$ 
such that $x_j\to x$ and $y_j\to y$ as $j\to\infty$. After discarding finitely many terms, there is no 
loss of generality in assuming that $0<|x_j-y_j|<2|x-y|$ for each $j\in{\mathbb{N}}$. 
Relying on the fact that $\omega$ is non-decreasing we may then write 
\begin{align}\label{6f4d-jCF}
|v(x)-v(y)| &=\lim_{j\to\infty}|u(x_j)-u(y_j)|
\leq[u]_{\dot{\mathscr{C}}^{\omega}(U)}\limsup_{j\to\infty}\omega(|x_j-y_j|) 
\nonumber\\[4pt] 
&\leq[u]_{\dot{\mathscr{C}}^{\omega}(U)}\,\omega(2|x-y|) 
\leq C_\omega^{\rm dou}[u]_{\dot{\mathscr{C}}^{\omega}(U)}\,\omega(|x-y|),
\end{align}
where $C_\omega^{\rm dou}\in[1,\infty)$ is the doubling constant of $\omega$. This ultimately shows that 
$v\in\dot{\mathscr{C}}^\omega(\overline{U})$ and $[v]_{\dot{\mathscr{C}}^\omega(\overline{U})}
\leq C_\omega^{\rm dou}[u]_{\dot{\mathscr{C}}^{\omega}(U)}$. All desired conclusions now follow. 
\end{proof}

The following example shows that generalized H{\"o}lder spaces can be distinct from
classical H{\"o}lder spaces.

\begin{example}\label{ex:strict}
Let $U\subset\mathbb{R}^n$ with $D:=\diam(U)<\infty$, and assume that 
\begin{equation}\label{eq:strict-hyp}
\parbox{10.00cm}{there exist a point $x_0\in U$, a rank $k_0\in\mathbb{N}$, two constants $C_0,C_1\in(0,\infty)$
and a sequence $(x_k)_{k\in\mathbb{N}}\subset\mathbb{R}^n$ such that $C_0/k\leq |x_k-x_0|\leq C_1/k$ 
and $x_k\in U$ for every $k\geq k_0$.}
\end{equation}
Informally speaking, we are asking that $U$ contains a sequence convergent to a point in $U$ at a prescribed rate.
Note that this assumption is very mild and it is always satisfied, for instance, whenever $U$ is open or $U$ is Ahlfors regular
(cf. Definition~\ref{treD.iii} below). 

Going further, let $\varphi(t)$ be a growth function on $(0,D)$. Extend $\varphi$ to $[0,D]$ by setting 
$\varphi(0):=0$ and $\displaystyle\varphi(D):=\lim_{t\to D^{-}}\varphi(t)$.
Assume that $\varphi$ is subadditive in $[0,D]$, that is,
\begin{equation}\label{eq:strict-phi}
\varphi(a+b)\leq\varphi(a)+\varphi(b),
\quad\text{for all }a,b\in[0,D]
\text{ with }a+b\in[0,D].
\end{equation} 
Fix $x,y,z\in U$ and assume first that $|x-z|\geq|y-z|$.
Taking $a:=|y-z|$ and $b:=|x-z|-|y-z|$ in \eqref{eq:strict-phi}, we obtain 
\begin{equation}\label{eq:strict-5}
0\leq\varphi(|x-z|)-\varphi(|y-z|)\leq\varphi(|x-z|-|y-z|),
\end{equation}
On the other hand, if $|y-z|\geq|x-z|$, a similar computation yields that
\begin{equation}\label{eq:strict-71}
0\leq\varphi(|y-z|)-\varphi(|x-z|)\leq\varphi(|y-z|-|x-z|).
\end{equation}
Gathering \eqref{eq:strict-5} and \eqref{eq:strict-71}, taking into account that
$\varphi$ is a non-decreasing function on $[0,D]$, and using the reverse triangle inequality,
we obtain
\begin{equation}\label{eq:strict-72}
|\varphi(|x-z|)-\varphi(|y-z|)|\leq\varphi\big(\big||x-z|-|y-z|\big|\big)\leq\varphi(|x-y|),
\end{equation}
for every $x,y,z\in U$. We conclude that:
\begin{equation}\label{eq:strict-claim}
\parbox{10.20cm}{if $\varphi$ is a subadditive growth function, $z\in U$ is a fixed point, and $F:U\to{\mathbb{R}}$ is 
defined as $F(x):=\varphi(|x-z|)$ for each $x\in U$, then $[F]_{\dot{\mathscr{C}}^{\widetilde{\varphi}}(U)}\leq 1$.}
\end{equation}

Continuing onward, fix $\alpha\in(0,1)$ and define now the growth function
\begin{equation}\label{eq:strict-yrF.2}
\omega(t):=t^{\alpha}\,\Big(\alpha^{-1}+\ln(D/t)\Big)
\,\,\text{ for all }\,\,t\in(0,D).
\end{equation}
Note first that this constitutes a particular instance of \eqref{yrF.5} in 
Example~\ref{trfdfd-2}, and consequently, condition \eqref{omega-cond:main.111} holds.
Moreover, for every $\varepsilon\in(0,\alpha)$ there is a constant $C\in(0,\infty)$ such that
\begin{equation}
t^{\alpha}\leq\omega(t)\leq C t^{\alpha-\varepsilon}\,\,\text{ for all }\,\,t\in (0,D),
\end{equation}
and, as a consequence,
\begin{equation}\label{eq:strict-1}
\mathscr{C}^{\alpha}(U)
\subseteq\mathscr{C}^\omega(U)
\subseteq\mathscr{C}^{\alpha-\varepsilon}(U).
\end{equation}
We will show that these inclusions are actually strict.

We start by proving that the second inclusion is strict. Fix $\varepsilon\in(0,\alpha)$ arbitrarily and, with $x_0\in U$ as in 
\eqref{eq:strict-hyp}, define $f(x):=|x-x_0|^{\alpha-\varepsilon}$ for every $x\in U$. From \eqref{eq:strict-claim}
applied with $z:=x_0$ and the subadditive growth function $\varphi:=|\cdot|^{\alpha-\varepsilon}$, it follows that 
$f\in\dot{\mathscr{C}}^{\alpha-\varepsilon}(U)$. As $f$ is bounded on $U$, we have
\begin{equation}\label{eq:strict-claim.BBB}
f\in{\mathscr{C}}^{\alpha-\varepsilon}(U).
\end{equation}
Recall the rank $k_0\in{\mathbb{N}}$, the sequence $(x_k)_{k\in\mathbb{N}}$, and the constants $0<C_0\leq C_1<\infty$ from 
\eqref{eq:strict-hyp}. In particular, there exists a unique integer $N\geq 2$ with the property that 
\begin{equation}\label{eq:kGFFD}
2^{N-2}C_0\leq C_1<2^{N-1}C_0.
\end{equation} 
We also observe that \eqref{eq:strict-hyp} implies that
\begin{equation}\label{eq:strict-prop-x2n}
|x_k-x_{2^Nk}|\leq\frac{5C_1}{4k}\qquad\text{ for every }k\geq k_0.
\end{equation}
Based on these, we may then estimate
\begin{align}\label{eq:strict-3}
[f]_{\dot{\mathscr{C}}^{\widetilde{\omega}}(U)}
&\geq\sup_{k\geq k_0}
\frac{\big||x_k-x_0|^{\alpha-\varepsilon}-|x_{2^Nk}-x_0|^{\alpha-\varepsilon}\big|}
{\widetilde{\omega}(|x_k-x_{2^Nk}|)}
\nonumber\\[8pt]
&\geq\sup_{k \geq k_0}\frac{\left(\frac{C_0}{k}\right)^{\alpha-\varepsilon}
\,(1-2^{-\alpha+\varepsilon})}{\widetilde{\omega}\big(\frac{5C_1}{4k}\big)}
\nonumber\\[8pt]
&\geq\lim_{k\to\infty}\frac{\left(\frac{C_0}{k}\right)^{\alpha-\varepsilon}
\,(1-2^{-\alpha+\varepsilon})}{\widetilde{\omega}\big(\frac{5C_1}{4k}\big)}
\nonumber\\[8pt]
&=\lim_{k\to\infty}\frac{\left(\frac{C_0}{k}\right)^{\alpha-\varepsilon}
\,(1-2^{-\alpha+\varepsilon})}
{\left(\frac{5C_1}{4k}\right)^{\alpha}\cdot\big(\alpha^{-1}+\ln\big(\frac{4kD}{5C_1}\big)\big)}
=\infty.
\end{align}
From this and \eqref{eq:strict-claim.BBB} we conclude that 
$f\in\mathscr{C}^{\alpha-\varepsilon}(U)\setminus\mathscr{C}^\omega(U)$.

To show that the first inclusion in \eqref{eq:strict-1} is also strict, define 
\begin{equation}\label{OIuygfdx}
g(x):=|x-x_0|^{\alpha}\,\bigg(\alpha^{-1}+\ln\bigg(\frac{D}{|x-x_0|}\bigg)\bigg)
\quad\text{ for every }\,\,x\in U\setminus\{x_0\}
\end{equation}
and set $g(x_0):=0$. On the one hand, we claim that 
\begin{equation}\label{eq:strict-claim.CCC}
g\in\mathscr{C}^\omega(U).
\end{equation}
Indeed, since $g$ is bounded on $U$, \eqref{eq:strict-claim.CCC} will follow from \eqref{eq:strict-claim} applied with 
$z:=x_0$ and $\varphi:=\omega$ (the growth function from \eqref{eq:strict-yrF.2}) as soon as we establish that $\omega$ 
is subadditive on $(0,D)$. To show the latter, observe via a direct computation that 
\begin{equation}\label{NGDsdfgh}
\omega''(t)=\alpha t^{\alpha-1}\,\Big[(\alpha-1)\ln(D/t)-1\Big]<0
\,\,\text{ for all }\,\,t\in(0,D).
\end{equation}
Thus, $\omega$ is concave down and we can write
\begin{equation}\label{NGDsdfgh.2}
\omega\big(\lambda t_1+(1-\lambda)t_2\big)\geq\lambda\omega(t_1)+(1-\lambda)\omega(t-2)
\,\,\text{ for all }\,\,t_1,\,t_2\in(0,D),\,\,\lambda\in(0,1).
\end{equation}
Passing to the limit with $t_2\to 0^{+}$ in \eqref{NGDsdfgh.2}, it follows that
\begin{equation}\label{NGDsdfgh.3}
\omega(\lambda t)\geq\lambda\omega(t)\,\,\text{ for all }\,\,t\in(0,D),\,\,\lambda\in(0,1).
\end{equation}
Hence, if $t_1,\,t_2\in(0,D)$ are such that $t_1+t_2\in(0,D)$, then \eqref{NGDsdfgh.3} implies
\begin{align}\label{NGDsdfgh.4}
\omega(t_1)+\omega(t_2)&=\omega\Big(\frac{t_1}{t_1+t_2}(t_1+t_2)\Big)+\omega\Big(\frac{t_2}{t_1+t_2}(t_1+t_2)\Big)
\nonumber\\[4pt]
&\geq \frac{t_1}{t_1+t_2}\omega(t_1+t_2)+\frac{t_2}{t_1+t_2}\omega(t_1+t_2)=\omega(t_1+t_2),
\end{align}
which proves that $\omega$ is subadditive. The justification of the membership in \eqref{eq:strict-claim.CCC} is complete.

On the other hand,
\begin{align}\label{eq:strict-4}
&[g]_{\dot{\mathscr{C}}^{\alpha}(U)}
=\sup_{\substack{x,y\in U\\x\neq y}}\frac{|g(x)-g(y)|}{|x-y|^{\alpha}}
\nonumber\\[8pt]
&\qquad\geq\sup_{k \geq k_0}
\frac{\big||x_k-x_0|^{\alpha}\,\big(\alpha^{-1}+\ln\big(\frac{D}{|x_k-x_0|}\big)\big)-|x_{2^Nk}-x_0|^{\alpha}
\,\big(\alpha^{-1}+\ln\big(\frac{D}{|x_{2^Nk}-x_0|}\big)\big)\big|}{|x_k-x_{2^Nk}|^{\alpha}}
\nonumber\\[8pt]
&\qquad\geq\lim_{k\to\infty}\frac{\left(\frac{C_0}{k}\right)^{\alpha}\,
\left(\alpha^{-1}+\ln\left(\frac{kD}{C_1}\right)\right)-\left(\frac{C_1}{2^Nk}\right)^{\alpha}\,
\left(\alpha^{-1}+\ln\left(\frac{2^NkD}{C_0}\right)\right)}{\left(\frac{5C_1}{4k}\right)^{\alpha}}=\infty.
\end{align}
From this and \eqref{eq:strict-claim.CCC}, we conclude that $g\in\mathscr{C}^\omega(U)\setminus\mathscr{C}^{\alpha}(U)$
as desired.
\end{example}

We continue by making the following definition. 

\begin{definition}\label{tfc}
Let $\Omega\subseteq\mathbb{R}^n$ be a nonempty open set and denote by 
$\mathscr{C}^1(\Omega)$ the space of continuously differentiable functions in $\Omega$. 
Given a growth function $\omega$ on $(0,D)$, with $D\in(0,\infty]$, for each $u\in\mathscr{C}^1(\Omega)$ define
\begin{equation}\label{y7uh}
\norm{u}_{\mathscr{C}^{1,\omega}(\Omega)}:=\sup_{\Omega}|u|+\sup_{\Omega}|\nabla u|
+[\nabla u]_{\dot{\mathscr{C}}^{\widetilde{\omega}}(\Omega)}\in[0,\infty],
\end{equation}
then introduce 
\begin{equation}\label{t4d6h-jNBB}
\mathscr{C}^{1,\omega}(\Omega):=\big\{u\in\mathscr{C}^1(\Omega):\,\norm{u}_{\mathscr{C}^{1,\omega}(\Omega)}<\infty\big\}.
\end{equation} 
\end{definition}

To close this section, we make the following convention. When simply speaking of a  
growth function, we shall understand a growth function $\omega$ defined on some interval 
$(0,D)$, with $D\in(0,\infty]$.

\section{Geometric measure theory}\label{sec:gmt}

As before, $\mathcal{H}^{n-1}$ denotes the $(n-1)$-dimensional Hausdorff measure in $\mathbb{R}^n$, 
and $\mathcal{L}^{n}$ denotes the $n$-dimensional Lebesgue measure in $\mathbb{R}^n$.
Given an open set $\Omega\subseteq{\mathbb{R}}^{n}$ and an aperture 
parameter $\kappa\in(0,\infty)$, define the nontangential approach regions 
\begin{equation}\label{yfrGV}
\Gamma_{\kappa}(x):=\big\{y\in\Omega:\,|y-x|<(1+\kappa)\,{\rm dist}\,(y,\partial\Omega)\big\} 
\,\,\text{ for each }\,\,x\in\partial\Omega,
\end{equation} 
and the nontangentially accessible boundary of $\Omega$ introduced in \cite[Definition~8.8.5, p.\,781]{GHA.I} as
\begin{equation}\label{eq:dNTA}
\partial_{{}_{\rm nta}}\Omega :=
\{x\in\partial\Omega:x\in\overline{\Gamma_\kappa(x)}\,\text{ for each }\,\kappa>0\}.
\end{equation} 
In turn, these regions are used to define the nontangential maximal operator ${\mathcal{N}}_\kappa$, 
acting on each continuous function $u\in{\mathscr{C}}^{\,0}(\Omega)$ according to 
\begin{equation}\label{6rtffd-1654}
\big({\mathcal{N}}_{\kappa}u\big)(x)=\sup\limits_{y\in\Gamma_{\kappa}(x)}|u(y)|
\,\,\text{ for all }\,\,x\in\partial\Omega,
\end{equation} 
with the convention that $\big({\mathcal{N}}_{\kappa}u\big)(x):=0$ whenever $x\in\partial\Omega$ 
is such that $\Gamma_\kappa(x)=\varnothing$. It turns out that 
${\mathcal{N}}_{\kappa}u:\partial\Omega\rightarrow[0,+\infty]$ is a lower-semicontinuous function. 

Continue to assume that $\Omega$ is an arbitrary open, nonempty, proper subset of ${\mathbb{R}}^n$ 
and suppose $u$ is a continuous function defined in $\Omega$. Also, fix an aperture parameter $\kappa>0$ 
and consider a point $x\in\partial_{{}_{\rm nta}}\Omega$. In this context, 
define the $\kappa$-nontangential limit of $u$ at $x$ as 
\begin{equation}\label{abxicnef7}
\big(u\big|^{{}^{\kappa-{\rm n.t.}}}_{\partial\Omega}\big)(x)
:=\lim_{\Gamma_\kappa(x)\ni y\to x}u(y),
\end{equation}
whenever the limit exists. 

Moving on, recall that an ${\mathcal{L}}^n$-measurable set $\Omega\subseteq{\mathbb{R}}^n$ has locally 
finite perimeter if its measure theoretic boundary, i.e., 
\begin{equation}\label{u7gFG-7t.1}
\partial_{*}\Omega:=\Big\{x\in\partial\Omega:\,\limsup_{r\to 0^{+}}\frac{\mathcal{L}^{n}(B(x,r)\cap\Omega)}{r^{n}}> 0, 
\enskip\limsup_{r\to 0^{+}}\frac{\mathcal{L}^{n}(B(x,r)\setminus\Omega)}{r^{n}}>0\Big\},
\end{equation}
satisfies
\begin{equation}\label{u7gFG-7t.2}
{\mathcal{H}}^{n-1}\big(\partial_{*}\Omega\cap K)<+\infty\,\,\text{ for each compact }\,\,K\subseteq{\mathbb{R}}^n
\end{equation}
(cf. \cite[Sections~5.7, 5.8, and 5.11, pp.\,194--226]{EvaGar92}).
Alternatively, an ${\mathcal{L}}^n$-measurable set $\Omega\subseteq{\mathbb{R}}^n$ has locally 
finite perimeter if, with the gradient taken in the sense of distributions in ${\mathbb{R}}^n$,
\begin{equation}\label{2.1.1}
\mu_{\Omega}:=\nabla{\mathbf{1}}_\Omega
\end{equation}
is an ${\mathbb{R}}^n$-valued Borel measure in ${\mathbb{R}}^n$ of locally finite total variation. 

Given a set $\Omega\subseteq{\mathbb{R}}^n$ of locally finite perimeter, fundamental work of De Giorgi-Federer 
(cf., e.g., \cite[Section~5.1, pp.\,166--171]{EvaGar92}) then ensures the following 
polar decomposition of the measure $\mu_\Omega$:
\begin{equation}\label{hmttr-3}
\mu_{\Omega}=\nabla{\mathbf{1}}_\Omega=-\nu\,|\nabla{\mathbf{1}}_\Omega|
\end{equation}
where $|\nabla{\mathbf{1}}_\Omega|$, the total variation measure of the measure $\nabla{\mathbf{1}}_\Omega$, 
is given by 
\begin{equation}\label{Nu-Dj-yygg}
|\nabla{\mathbf{1}}_\Omega|={\mathcal{H}}^{n-1}\lfloor\partial_\ast\Omega,
\end{equation}
and where 
\begin{equation}\label{Nu-Dj}
\begin{array}{l}
\text{$\nu\in\big[L^\infty(\partial_\ast\Omega,{\mathcal{H}}^{n-1})\big]^n$ is an 
${\mathbb{R}}^n$-valued function}
\\[6pt]
\text{satisfying $|\nu(x)|=1$ at ${\mathcal{H}}^{n-1}$-a.e. point $x\in\partial_\ast\Omega$}. 
\end{array}
\end{equation}
We shall refer to $\nu$ above as the {\tt geometric} {\tt measure} {\tt theoretic} {\tt outward} 
{\tt unit} {\tt normal} to $\Omega$.
Note here that by simply eliminating the distribution theory jargon implicit 
in the interpretation of \eqref{hmttr-3} (and using a straightforward limiting argument 
involving a mollifier) one already arrives at the formula 
\begin{equation}\label{WRTP22-1-GF}
\begin{array}{c}
\displaystyle
\int_\Omega{\rm div}\vec{F}\,d{\mathcal{L}}^n
=\int_{\partial_\ast\Omega}\nu\cdot\big(\vec{F}\big|_{\partial\Omega}\big)\,d{\mathcal{H}}^{n-1}
\\[10pt]
\text{for each vector field $\vec{F}\in\big[{\mathscr{C}}^{1}_c({\mathbb{R}}^n)\big]^n$.}
\end{array}
\end{equation}

For a set $\Omega\subseteq{\mathbb{R}}^n$ of locally finite perimeter, we let 
$\partial^*\Omega$ denote the {\tt reduced} {\tt boundary} of $\Omega$, that is,
\begin{equation}\label{PP-WA1}
\parbox{10.00cm}{$\partial^*\Omega$ consists of all points 
$x\in\partial\Omega$ satisfying the following properties: 
$0<\mathcal{H}^{n-1}\big(B(x,r)\cap\partial_\ast\Omega\big)<+\infty$ for each $r\in(0,\infty)$, and
$\lim\limits_{r\to 0^{+}}\fint_{B(x,r)\cap\partial_\ast\Omega}\nu\,d\mathcal{H}^{n-1}=\nu(x)\in S^{n-1}$.}
\end{equation}
For any set $\Omega\subseteq{\mathbb{R}}^n$ of locally finite perimeter we then have (cf. \cite[Lemma ~1, p.\,208]{EvaGar92})
\begin{equation}\label{bdaryproperties}
\partial^{*}\Omega\subseteq\partial_{*}\Omega\subseteq\partial\Omega\,\,\text{ and }\,\,
\mathcal{H}^{n-1}\big(\partial_{*}\Omega\setminus\partial^{*}\Omega\big)=0.
\end{equation}

\begin{definition}\label{treD.iii}
A closed set $E\subseteq\mathbb{R}^{n}$ is called {\tt lower} {\tt Ahlfors} {\tt regular} if there exists a 
constant $c\in(0,\infty)$ such that 
\begin{equation}\label{eq:ADR.a}
c\,r^{n-1}\leq\mathcal{H}^{n-1}\big(B(x,r)\cap E\big)\,\,\text{ for all }\,\,r\in(0,2\diam(E))
\,\,\text{ and all }\,\,x\in E.
\end{equation} 
Also, a closed set $E\subseteq\mathbb{R}^{n}$ is called {\tt upper} {\tt Ahlfors} {\tt regular} if there exists a 
constant $C\in(0,\infty)$ such that 
\begin{equation}\label{eq:ADR.b}
\mathcal{H}^{n-1}\big(B(x,r)\cap E\big)\leq Cr^{n-1}
\,\,\text{ for all }\,\,r\in(0,\infty)\,\,\text{ and all }\,\,x\in{\mathbb{R}}^n.
\end{equation} 
Finally, a closed set $E\subseteq\mathbb{R}^{n}$ is simply called {\tt Ahlfors} {\tt regular} if 
it is both lower Ahlfors regular and upper Ahlfors regular. The constants $c,C$ intervening in 
\eqref{eq:ADR.a}-\eqref{eq:ADR.b} make up the Ahlfors regular character of $E$.
\end{definition}

\begin{remark}\label{RRR.D+M}
In the context of Definition~\ref{treD.iii}, estimate \eqref{eq:ADR.b} self-improves to 
\begin{equation}\label{eq:ADR.b.rrr}
\mathcal{H}^{n-1}\big(\,\overline{B(x,r)}\cap E\big)\leq Cr^{n-1}
\,\,\text{ for all }\,\,r\in(0,\infty)\,\,\text{ and all }\,\,x\in{\mathbb{R}}^n,
\end{equation} 
for the same constant $C\in(0,\infty)$ as in \eqref{eq:ADR.b}.
To justify this claim, given any point $x\in{\mathbb{R}}^n$ and any radius $r\in(0,\infty)$, for each $R\in(r,\infty)$ use \eqref{eq:ADR.b} 
with $R$ in place of $r$ to write $\mathcal{H}^{n-1}\big(\,\overline{B(x,r)}\cap E\big)\leq\mathcal{H}^{n-1}\big(B(x,R)\cap E\big)\leq CR^{n-1}$ 
and then pass to limit $R\searrow r$ to obtain \eqref{eq:ADR.b.rrr}.
\end{remark}

Following \cite[p.\,2572]{HofMitTay10} we make the following definition. 

\begin{definition}\label{utRF-he42}
A nonempty, proper, open subset $\Omega$ of $\mathbb{R}^{n}$ is called an {\tt Ahlfors} {\tt regular} {\tt domain} 
provided $\partial\Omega$ is an Ahlfors regular set and $\mathcal{H}^{n-1}(\partial\Omega\setminus\partial_{*}\Omega)=0$. 
\end{definition}

From \cite[Proposition~2.9, p.\,2588]{HofMitTay10} we know that 
\begin{equation}\label{weaklyaccesible}
\parbox{10.90cm}{if $\Omega\subseteq\mathbb{R}^{n}$ is an Ahlfors regular domain, and 
$\kappa>0$ is an arbitrary aperture parameter, then $x\in\overline{\Gamma_{\kappa}(x)}$ 
(i.e., $x$ is an accumulation point for the nontangential approach region $\Gamma_{\kappa}(x)$)
for ${\mathcal{H}}^{n-1}$-a.e. point $x$ in the topological boundary $\partial\Omega$.}
\end{equation}
In particular, if $\Omega\subseteq\mathbb{R}^{n}$ is an Ahlfors regular domain and 
$u$ is an ${\mathcal{L}}^n$-measurable function defined in $\Omega$, then for any fixed 
aperture parameter $\kappa>0$ it is meaningful to attempt to define the nontangential boundary trace 
$\big(u\big|^{{}^{\kappa-{\rm n.t.}}}_{\partial\Omega}\big)(x)$ at 
${\mathcal{H}}^{n-1}$-a.e. point $x\in\partial\Omega$.

For future reference, it is also useful to remark that (see \cite[Lemma 5.10.9(6), pp.\,466-467]{GHA.I} for a proof)
\begin{equation}\label{76g-h5Fa-ee5ds}
\parbox{11.00cm}{if $\Omega\subset{\mathbb{R}}^n$ is an Ahlfors regular domain 
then $\Omega_{-}:={\mathbb{R}}^n\setminus\overline{\Omega}$ is also 
an Ahlfors regular domain, whose topological boundary coincides with that of $\Omega$,
and whose geometric measure theoretic boundary agrees with that of $\Omega$, i.e., 
$\partial(\Omega_{-})=\partial\Omega$ and $\partial_\ast(\Omega_{-})=\partial_\ast\Omega$. 
Moreover, the geometric measure theoretic outward unit normal to $\Omega_{-}$ 
is $-\nu$ at $\sigma$-a.e. point on $\partial\Omega$.}
\end{equation}

We continue by recalling the notion of countable rectifiability. 

\begin{definition}\label{tdC-yu65}
A closed set $E\subset\mathbb{R}^{n}$ is said to be {\tt countably} {\tt rectifiable} 
{\rm (}of dimension $(n-1)${\rm )} provided 
\begin{equation}\label{64eddd} 
E=\left(\bigcup_{j=1}^{\infty}S_j\right)\cup N, 
\end{equation} 
where $N$ is a null-set for $\mathcal{H}^{n-1}$ and each $S_j$ is the image of a compact subset of $\mathbb{R}^{n-1}$ 
under a Lipschitz map from $\mathbb{R}^{n-1}$ to $\mathbb{R}^{n}$. 
\end{definition}

The following definition is due to G.~David and S.~Semmes (cf. \cite[Definition~I.1.65, p.\,24]{DavSem93}). 

\begin{definition}\label{6feDa99j}
A closed set $E\subseteq{\mathbb{R}}^{n}$ is said to be {\tt uniformly} {\tt rectifiable} 
{\rm(}or simply {\rm UR}{\rm )} if $E$ is Ahlfors regular and there exist $\varepsilon,M\in(0,\infty)$ 
such that for each location $x\in E$ and each scale $R\in(0\,,\,2\,{\rm diam}\,(E)\big)$ it 
there exists a Lipschitz map $\varphi:B_{R}^{n-1}\to{\mathbb{R}}^{n}$ {\rm (}where $B_{R}^{n-1}$ 
is a ball of radius $R$ in ${\mathbb{R}}^{n-1}${\rm )} with Lipschitz constant $\leq M$ and such that
\begin{equation}\label{eq:URDef}
{\mathcal{H}}^{n-1}\big(E\cap B(x,R)\cap\varphi(B_R ^{n-1})\big)\geq\varepsilon R^{n-1}. 
\end{equation} 
\end{definition}

Uniformly rectifiability is a quantitative version of countable rectifiability. Any {\rm UR} set is 
countably rectifiable. We also remark that any Ahlfors regular domain in $\mathbb{R}^n$ has an 
$(n-1)$-dimensional countably rectifiable boundary (cf. \cite[(2-24), p.\,967]{MitMitVer16}).

The following definition first appeared in \cite[Definition~3.7, p.\,2631]{HofMitTay10}. 

\begin{definition}\label{tFCa97}
A nonempty, proper, open subset $\Omega$ of $\mathbb{R}^{n}$ is called a {\tt UR} {\tt domain} provided 
$\partial\Omega$ is a {\rm UR} set and $\mathcal{H}^{n-1}(\partial\Omega\setminus\partial_{*}\Omega)=0$.
\end{definition}

We continue by recording the definition of the class of uniform
domains introduced by O.~Martio and J.~Sarvas in \cite[p.\,387]{MaSa78}.

\begin{definition}\label{tFCa-u6fr}
A nonempty, proper, open subset $\Omega$ of $\mathbb{R}^{n}$ is called a {\tt uniform} {\tt domain} 
if there exist $\varkappa\in[1,\infty)$ such that any two points $x,y\in\Omega$ may be joined in $\Omega$  
by a rectifiable path $\gamma$ satisfying 
\begin{equation}\label{7g4dd}
\begin{array}{c}
\length(\gamma)\leq\varkappa|x-y|\,\,\text{ and, for each $z\in\gamma$,} 
\\[6pt]
\min\big\{\length(\gamma_{x,z})\,,\,\length(\gamma_{y,z})\big\}\leq\varkappa\dist(z,\partial\Omega),
\end{array}
\end{equation}
where $\gamma_{x,z}$ and $\gamma_{y,z}$ are the sub-arcs of $\gamma$ joining $z$ with $x$ and $y$, respectively.
\end{definition}

\begin{definition}\label{6fff-hr}
Let $\omega$ be a growth function. A nonempty, proper, open subset $\Omega$ of $\mathbb{R}^n$ is called a 
{\tt Lyapunov} $\mathscr{C}^{1,\omega}$-{\tt domain} {\rm (}or simply a $\mathscr{C}^{1,\omega}$-{\tt domain}{\rm )} 
if there exist $r,h>0$ such that for every point $x_0\in\partial\Omega$ there exists a coordinate system 
$(x',x_n)\in{\mathbb{R}}^{n-1}\times{\mathbb{R}}$ which is isometric to the canonical one and has $x_0$ 
as its origin, and a function $\varphi\in{\mathscr{C}}^{1,\omega}({\mathbb{R}}^{n-1})$ such that 
\begin{equation}\label{6tgfvc-n54c}
\Omega\cap C(r,h)=\big\{x=(x',x_n)\in{\mathbb{R}}^{n-1}\times\mathbb{R}:\,|x'|<r,\,\,\varphi(x')<x_n<h\big\},
\end{equation}  
where $C(r,h)$ is the cylinder defined as 
\begin{equation}\label{9j8643}
C(r,h)=\big\{x=(x',x_n)\in{\mathbb{R}}^{n-1}\times{\mathbb{R}}:\,|x'|<r,\,\,-h<x_n<h\big\}.
\end{equation}
\end{definition}

\begin{remark}\label{YrafV.a}
Any $\mathscr{C}^{1,\omega}$-domain with compact boundary is, in particular, 
simultaneously a {\rm UR} domain and a uniform domain.
\end{remark}

\begin{remark}\label{YrafV.b}
Analogously to the characterization of $\mathscr{C}^1$ domains given in \cite[Theorem~2.19, p.\,608]{HofMitTay07}, one can 
prove that $\mathscr{C}^{1,\omega}$-domain are those open sets of locally finite perimeter with the 
property that the geometric measure theoretic outward unit normal $\nu$ to $\Omega$, after possibly being 
altered on a set of $\sigma$-measure zero, belongs to $\mathscr{C}^{\omega}(\partial\Omega)$.  
See \cite[(2.74), p.\,608]{HofMitTay07} for the proof in the case $\omega(t)=t^{\alpha}$ for each $t\in(0,\infty)$ 
with $\alpha\in(0,1)$, which is easily adapted to our scenario. 
\end{remark}

We close this section by proving some useful integral estimates on upper Ahlfors regular sets involving growth functions.
Here and elsewhere, for each number $a\in{\mathbb{R}}$ we agree to abbreviate $(a)_{+}:=\max\{a,0\}$.

\begin{lemma}\label{lemma:ijijil}
Suppose $\Sigma$ is closed subset of $\mathbb{R}^n$ satisfying an upper Ahlfors regularity condition with 
constant $C\in(0,\infty)$. Define $\sigma:={\mathcal{H}}^{n-1}\lfloor\Sigma$. Also, assume $\omega$ is a 
growth function on $(0,\infty)$. Finally, fix some $d\in{\mathbb{R}}$ and pick an arbitrary point $x\in\Sigma$. 
Then for each $r\in(0,\infty)$ one has 
\begin{equation}\label{lkmem}
\int_{\Sigma\cap\overline{B(x,r)}}\frac{\omega(|x-y|)}{|x-y|^{n-1+d}}\,d\sigma(y)
\leq C\frac{2^{d_{+}}\cdot 2^{(n-1+d)_{+}}}{\ln 2}\int_{0}^{2\,r}\frac{\omega(s)}{s^d}\frac{ds}{s},
\end{equation}
\begin{equation}\label{dedewad}
\int_{\Sigma\setminus B(x,r)}\frac{\omega(|x-y|)}{|x-y|^{n-1+d}}\,d\sigma(y)
\leq C\frac{2^{d_{+}}\cdot 2^{(n-1+d)_{+}}}{\ln 2}\int_{2\,r}^{4\diam(\Sigma)}\frac{\omega(s)}{s^d}\,\frac{ds}{s}.
\end{equation}
\end{lemma}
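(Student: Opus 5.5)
The plan is a dyadic annular decomposition around $x$. On each annulus the integrand is compared with a one-dimensional integral over a dyadic interval, and then everything is summed.

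For \eqref{lkmem}, split
\[
\Sigma\cap\overline{B(x,r)}\setminus\{x\}=\bigcup_{k\ge0}A_k,\qquad
A_k:=\{y\in\Sigma:\,2^{-k-1}r<|x-y|\le 2^{-k}r\}.
\]
The point $x$ itself carries no $\sigma$-mass, since upper Ahlfors regularity gives $\sigma(B(x,\varepsilon))\le C\varepsilon^{n-1}\to0$.

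On $A_k$ I will bound the numerator and the denominator separately. Since $\omega$ is non-decreasing, $\omega(|x-y|)\le\omega(2^{-k}r)$. For the denominator, $|x-y|^{-(n-1+d)}\le (2^{-k-1}r)^{-(n-1+d)}$ when $n-1+d\ge0$, and $\le (2^{-k}r)^{-(n-1+d)}$ otherwise. In both cases this is at most $2^{(n-1+d)_+}(2^{-k}r)^{-(n-1+d)}$. By Remark~\ref{RRR.D+M}, $\sigma(A_k)\le\sigma(\overline{B(x,2^{-k}r)})\le C(2^{-k}r)^{n-1}$. Hence the contribution of $A_k$ is at most
\[
C\,2^{(n-1+d)_+}\,\frac{\omega(2^{-k}r)}{(2^{-k}r)^d}.
\]

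Next I convert each such term into an integral over $[2^{-k}r,2^{-k+1}r]$. For $s$ in that interval, monotonicity gives $\omega(s)\ge\omega(2^{-k}r)$. Also $s^{-d}\ge 2^{-d_+}(2^{-k}r)^{-d}$, by the same sign case split on $d$. Consequently
\[
\frac{\omega(2^{-k}r)}{(2^{-k}r)^d}\le\frac{2^{d_+}}{\ln2}\int_{2^{-k}r}^{2^{-k+1}r}\frac{\omega(s)}{s^d}\frac{ds}{s},
\]
using $\int_a^{2a}ds/s=\ln2$. Summing over $k\ge0$, the intervals tile $(0,2r]$, and \eqref{lkmem} follows with exactly the stated constant.

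For \eqref{dedewad} I use the outward version of the same decomposition:
\[
\Sigma\setminus B(x,r)=\bigcup_{k\ge0}A'_k,\qquad A'_k:=\{y\in\Sigma:\,2^kr\le|x-y|<2^{k+1}r\}.
\]
On $A'_k$ the numerator is bounded by $\omega(2^{k+1}r)$ and the denominator by $2^{(n-1+d)_+}(2^{k+1}r)^{-(n-1+d)}$. The measure satisfies $\sigma(A'_k)\le C(2^{k+1}r)^{n-1}$. This gives a term $C2^{(n-1+d)_+}\omega(2^{k+1}r)(2^{k+1}r)^{-d}$, which I dominate by $\frac{2^{d_+}}{\ln 2}\int_{2^{k+1}r}^{2^{k+2}r}\omega(s)s^{-d}\,ds/s$ as before.

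The point needing care, though not a real obstacle, is restricting the sum. Only indices with $A'_k\ne\varnothing$ contribute, and for those $2^kr\le\diam(\Sigma)$ because $x\in\Sigma$. So $2^{k+2}r\le4\diam(\Sigma)$, and the intervals $[2^{k+1}r,2^{k+2}r]$ lie in $[2r,4\diam(\Sigma)]$, which yields \eqref{dedewad}. If $\diam(\Sigma)<r$, the left side vanishes and the estimate is trivial.
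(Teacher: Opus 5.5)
Your proof is correct and follows the paper's argument: the same dyadic annular decomposition around $x$ (shrinking annuli for \eqref{lkmem}, expanding ones truncated at $\diam(\Sigma)$ for \eqref{dedewad}), with each annulus bounded using monotonicity of $\omega$ and upper Ahlfors regularity and then converted into an integral over a dyadic interval, which gives exactly the stated constant. The differences are cosmetic. Your pointwise bound $s^{-d}\geq 2^{-d_{+}}(2^{-k}r)^{-d}$ replaces the paper's use of $(2^t-1)/t\geq\ln 2$. Your half-open annuli and explicit remark that $\sigma(\{x\})=0$ also make the decomposition slightly cleaner than the paper's overlapping closed annuli.
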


\begin{proof}
Fix $r\in(0,\infty)$ and abbreviate $B:=B(x,r)$. Using the fact that $\omega$ is non-decreasing we may estimate
\begin{align}\label{7dfd-DFfd.aaa}
\int_{\Sigma\cap\overline{B}}\frac{\omega(|x-y|)}{|x-y|^{n-1+d}}\,d\sigma(y)
&=\sum_{k=0}^\infty\int_{\Sigma\cap(2^{-k}\overline{B}\setminus 2^{-k-1}\overline{B})}
\frac{\omega(|x-y|)}{|x-y|^{n-1+d}}\,d\sigma(y)
\nonumber\\[4pt]
&\leq\sum_{k=0}^\infty 2^{(1-n-d)_{+}}\frac{\omega(2^{-k}r)}{(2^{-k-1}\,r)^{n-1+d}}\,\sigma(\Sigma\cap 2^{-k}\overline{B}).
\end{align}
To estimate each term in the sum in \eqref{7dfd-DFfd.aaa}, we recall that $\Sigma$ is upper Ahlfors regular
(cf. Remark~\ref{RRR.D+M}) and (by analyzing separately the cases $d=0$, $d>0$, and $d<0$) 
we use that $\omega$ is non-decreasing combined with the fact that $\frac{2^t-1}{t}\geq\ln 2$ for $t\in(0,\infty)$ to write
\begin{align}\label{7dfd-DFfd}
\sum_{k=0}^\infty & 2^{(1-n-d)_{+}}\frac{\omega(2^{-k}r)}{(2^{-k-1}\,r)^{n-1+d}}\,\sigma(\Sigma\cap 2^{-k}\overline{B})
\nonumber\\[4pt]
\leq &C\frac{2^{d_{+}}\cdot 2^{(n-1+d)_{+}}}{\ln 2}\sum_{k=0}^\infty\int_{2^{-k}\,r}^{2^{-k+1}\,r}
\frac{\omega(s)}{s^d}\frac{ds}{s}
\nonumber\\[4pt]
=&C\frac{2^{d_{+}}\cdot 2^{(n-1+d)_{+}}}{\ln 2}\int_{0}^{2\,r}\frac{\omega(s)}{s^d}\frac{ds}{s}.
\end{align}
Now \eqref{lkmem} follows from \eqref{7dfd-DFfd.aaa} and \eqref{7dfd-DFfd}.

Let us turn to \eqref{dedewad}. 
We may assume that $\Sigma\setminus B\neq\varnothing$, otherwise there is nothing to prove. With $\lfloor\cdot\rfloor$ denoting 
the floor function, set $N:=\lfloor\log_2\left(\diam(\Sigma)/r\right)\rfloor\in\mathbb{N}_0\cup\{\infty\}$, 
so that $\Sigma\setminus 2^k B=\varnothing$ for every integer $k>N$. Then, since $\omega$ is non-decreasing and
$\Sigma$ is upper Ahlfors regular, a reasoning similar to the one used in \eqref{7dfd-DFfd.aaa}-\eqref{7dfd-DFfd} allows us to write 
\begin{align}\label{uhh-jrR}
\int_{\Sigma\setminus B}\frac{\omega(|x-y|)}{|x-y|^{n-1+d}}\,d\sigma(y)
&=\sum_{k=0}^N\int_{\Sigma\cap(2^{k+1}\overline{B}\setminus 2^k B)}\frac{\omega(|x-y|)}{|x-y|^{n-1+d}}\,d\sigma(y)
\nonumber\\[4pt]
&\leq\sum_{k=0}^N 2^{(1-n-d)_{+}}\frac{\omega(2^{k+1}\,r)}{(2^k\,r)^{n-1+d}}\,\sigma(\Sigma\cap 2^{k+1}B)
\nonumber\\[4pt]
&\leq C\frac{2^{d_{+}}\cdot 2^{(n-1+d)_{+}}}{\ln 2}\sum_{k=0}^N\int_{2^{k+1}\,r}^{2^{k+2}\,r}\frac{\omega(s)}{s^d}\frac{ds}{s}
\nonumber\\[4pt]
&\leq C\frac{2^{d_{+}}\cdot 2^{(n-1+d)_{+}}}{\ln 2}\int_{2\,r}^{4\diam(\Sigma)}\frac{\omega(s)}{s^d}\frac{ds}{s},
\end{align}
proving \eqref{dedewad}. 
\end{proof}

\section{Singular integrals on generalized H\"older spaces}\label{Sec:6ff}

We first recall a basic result pertaining to the behavior of singular integral operators on {\rm UR} sets
from \cite[Theorem~2.3.2, (2.3.15), (2.3.56), pp.\,348--357]{GHA.III}, 
\cite[Theorem~2.4.1, (2.4.9), pp.\,382--390]{GHA.III}, \cite[Theorem~2.5.1, pp.\,407--408]{GHA.III}. 

\begin{theorem}\label{Clpthm}
Suppose $\Omega\subseteq{\mathbb{R}}^{n}$ is an open set such that $\partial\Omega$ is a {\rm UR} set. 
Abbreviate $\sigma:={\mathcal{H}}^{n-1}\lfloor\partial\Omega$ and denote by $\nu$ the geometric measure theoretic 
outward unit normal to $\Omega$. Also, assume $N=N(n)\in{\mathbb{N}}$ is a sufficiently 
large integer and consider a complex-valued function $k\in{\mathscr{C}}^N\big(\mathbb{R}^{n}\setminus\{0\}\big)$ 
which is odd and positive homogeneous of degree $1-n$. Finally, fix an aperture parameter $\kappa>0$.
In this setting, for each function $f\in L^1\big(\partial\Omega,\tfrac{\sigma(x)}{1+|x|^{n-1}}\big)$ define
\begin{equation}\label{tfd.DDD.1}
\mathcal{T}f(x):=\int_{\partial\Omega}k(x-y)f(y)\,d\sigma(y)\,\,\text{ for each }\,\,x\in\Omega,
\end{equation} 
\begin{equation}\label{TDef}
Tf(x):=\lim_{\varepsilon\to 0^{+}}\int\limits_{\substack{y\in\partial\Omega\\ |x-y|>\varepsilon}}k(x-y)f(y)\,d\sigma(y)
\,\,\text{ for $\sigma$-a.e. }\,\,x\in\partial\Omega.
\end{equation}

Then for each $f\in L^1\big(\partial\Omega,\tfrac{\sigma(x)}{1+|x|^{n-1}}\big)$ 
the limit in \eqref{TDef} exists $\sigma$-a.e. and one has the jump-formula
\begin{equation}\label{jumplp}
(\mathcal{T}f)\big|_{\partial\Omega}^{{}^{\kappa-{\rm n.t.}}}(x)=\frac{1}{2i}\widehat{k}(\nu(x))f(x)+Tf(x),
\end{equation} 
for $\sigma$-a.e. $x\in\partial_\ast\Omega$, where $\widehat{k}$ denotes the Fourier transform 
of $k$ in $\mathbb{R}^n$ and $i:=\sqrt{-1}\in{\mathbb{C}}$.

Also, for each integrability exponent $p\in(1,\infty)$ there exists a finite constant $C>0$ such that
for each function $f\in L^p(\partial\Omega,\sigma)$ one has
\begin{equation}\label{jvgv-hjT}
\norm{\mathcal{N}_\kappa(\mathcal{T}f)}_{L^p(\partial\Omega,\sigma)}\leq C\norm{f}_{L^p(\partial\Omega,\sigma)},
\end{equation} 
\begin{equation}\label{ytff}
\norm{Tf}_{L^p(\partial\Omega,\sigma)}\leq C\norm{f}_{L^p(\partial\Omega,\sigma)}.
\end{equation}
\end{theorem}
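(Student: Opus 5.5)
The proof is a combination of Calderón--Zygmund theory on uniformly rectifiable sets with a density argument for the jump relation, and I would organize it in three stages. First, the $L^p$ bounds. The kernel $k(x-y)$ restricted to $\partial\Omega\times\partial\Omega$ is a standard Calderón--Zygmund kernel, since it is odd, homogeneous of degree $1-n$ and ${\mathscr{C}}^N$ away from the origin. The David--Semmes theory for UR sets \cite{DavSem93} gives uniform $L^2(\partial\Omega,\sigma)$ bounds for the truncated operators $T_\varepsilon$. For this I would expand $k|_{S^{n-1}}$ in spherical harmonics, so that $k(z)=\sum_{j}c_j Y_j(z/|z|)|z|^{1-n}$ with rapidly decaying coefficients; this is where $N$ large is used. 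Each piece is then an odd kernel of the type covered by David--Semmes, with operator norm growing polynomially in the degree. Cotlar's inequality on the space of homogeneous type $(\partial\Omega,|\cdot-\cdot|,\sigma)$ then yields $L^p$ bounds, $1<p<\infty$, for the maximal truncated operator $T_\ast f:=\sup_{\varepsilon>0}|T_\varepsilon f|$.

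Second, the nontangential bound \eqref{jvgv-hjT}. For $z\in\Gamma_\kappa(x)$ set $\varepsilon:=|z-x|\approx\dist(z,\partial\Omega)$ and split $\mathcal{T}f(z)$ into the integral over $\partial\Omega\setminus B(x,C\varepsilon)$ and the integral over $\partial\Omega\cap B(x,C\varepsilon)$. On the first region, replacing $k(z-y)$ by $k(x-y)$ costs an error of size $\varepsilon|x-y|^{-n}|f(y)|$; summing over dyadic annuli, as in Lemma~\ref{lemma:ijijil}, bounds this by $Mf(x)$. On the second region $|k(z-y)|\lesssim\varepsilon^{1-n}$, which again gives a bound by $Mf(x)$. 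Hence ${\mathcal{N}}_\kappa(\mathcal{T}f)\lesssim T_\ast f+Mf$, and \eqref{jvgv-hjT} follows from the first stage. The same comparison shows that the limits in \eqref{TDef} exist $\sigma$-a.e. once they exist on a dense class: by the standard maximal-operator argument, $L^p$-boundedness of $T_\ast$ makes the set of $f$ with a.e. convergence closed in $L^p$.

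Third, the dense class and the jump formula \eqref{jumplp}. I would take $f$ Lipschitz with compact support. For such $f$, a.e. existence of the principal value reduces to $f\equiv 1$ near $x$, i.e., to existence of $\lim_{\varepsilon\to0}\int_{\partial\Omega\cap\{\varepsilon<|x-y|<1\}}k(x-y)\,d\sigma(y)$. This is the step I expect to be the main obstacle. My first attempt: on a Lipschitz graph, approximate tangent planes at a.e. point together with oddness of $k$ make the cancellation work, and the countable rectifiability of UR sets (together with the $L^p$ bound for $T_\ast$) transfers this to $\partial\Omega$. Granting the principal-value existence, the jump formula for Lipschitz $f$ follows by writing $\mathcal{T}f(z)=\mathcal{T}(f-f(x))(z)+f(x)\mathcal{T}1(z)$. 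The first term converges nontangentially to $T(f-f(x))(x)$ by dominated convergence. For the second, at a point $x\in\partial^\ast\Omega$ I would blow up $\Omega$ to the half-space $\{y:\langle y,\nu(x)\rangle<0\}$ and compute there, via the Fourier transform, the half-space contribution $\tfrac{1}{2i}\widehat{k}(\nu(x))$; since $\widehat{k}$ is odd and homogeneous of degree $-1$, this is exactly the jump. The identity then extends to all $f\in L^p$, and to $L^1(\partial\Omega,\sigma(x)/(1+|x|^{n-1}))$ after localization, using the maximal bounds from the first two stages.
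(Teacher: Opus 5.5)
Your stages 1 and 2 are standard and fine: David--Semmes $L^2$ bounds with Cotlar for $T_\ast$, the pointwise bound $\mathcal{N}_\kappa(\mathcal{T}f)\lesssim T_\ast f+Mf$, and the fact that $T_\ast$-bounds make the class of $f$ with a.e.\ convergent truncations closed. The blow-up computation of $\tfrac{1}{2i}\widehat{k}(\nu(x))$ at points of $\partial^\ast\Omega$ is also fine. The genuine gap is the step you flag yourself: the $\sigma$-a.e.\ existence of $\lim_{\varepsilon\to0^+}\int_{\partial\Omega\cap\{\varepsilon<|x-y|<1\}}k(x-y)\,d\sigma(y)$. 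The mechanism you propose for it on Lipschitz graphs cannot work. A tangent plane at $x$ together with oddness of $k$ only cancels the zeroth-order term. The graph deviates from its tangent plane by $o(1)$ relative to the scale, and these contributions need not be summable over dyadic scales.

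Concretely, take $n=2$ and $k(z)=z_2/|z|^2$. Let $\partial\Omega$ coincide near the origin with the graph of the Lipschitz function $A(t)=|t|/\log(1/|t|)$, which is differentiable at $0$ with horizontal tangent. Then $k>0$ on the curve, and for small $r_0$
\[
\int_{\{y\in\partial\Omega:\,\varepsilon<|y|<r_0\}}k(y)\,d\sigma(y)\geq\int_{\varepsilon<|t|<r_0/2}\frac{A(t)}{2t^2}\,dt=\int_{\varepsilon}^{r_0/2}\frac{dt}{t\log(1/t)}\longrightarrow+\infty
\]
as $\varepsilon\to0^+$. So the principal value fails at a point that has a tangent line. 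Expanding $k$ around the tangent plane shows what is going on. The first correction is a truncated Riesz-type singular integral of $\nabla A$ evaluated at $x$, built from the second differences $2A(x')-A(x'+h)-A(x'-h)$. The higher-order terms are Calder\'on commutators. Their a.e.\ convergence is itself a principal-value theorem, requiring a maximal inequality and a dense class, not just differentiability of $A$ at $x$.

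What you need is a genuinely almost-everywhere input on Lipschitz graphs. One example is Mas's $L^p$ bounds for the variation of the truncations \cite{Mas}, which give a.e.\ existence for every $f\in L^p$. With that in hand, your transfer to $\partial\Omega$ via countable rectifiability can be carried out: split $\sigma=\sigma\lfloor\Gamma+\sigma\lfloor(\partial\Omega\setminus\Gamma)$, approximate the second piece by compact sets disjoint from $\Gamma$, and control the remainder with maximal bounds. This is the route behind the statement as used in the paper. The paper does not prove Theorem~\ref{Clpthm} but quotes it from \cite{GHA.III}, with a.e.\ existence of principal values on UR sets coming from \cite[Proposition~5.6.7]{GHA.I}, which rests on \cite{Mas} and \cite{Mattila}.

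Nor can the divergence-theorem mechanism the paper uses for double layers (cf.\ the proof of Theorem~\ref{thm:GHA-T1.JUMP}) substitute for this input in the present generality. That mechanism needs a factor $\nu(y)$ in the kernel and only sees $\partial_\ast\Omega$. Here only $\partial\Omega$ is assumed UR, so $\partial\Omega\setminus\partial_\ast\Omega$ may carry positive $\sigma$-measure, and principal values must be produced there as well.
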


We now turn to the task of estimating singular integral operators acting on generalized H\"older spaces. 

\begin{lemma}\label{lemma:3.2-new}
Suppose $\Omega$ is a nonempty, proper, open subset of $\mathbb{R}^n$ with compact boundary
satisfying an upper Ahlfors regularity condition with constant $C\in(0,\infty)$ and abbreviate 
$\sigma:={\mathcal{H}}^{n-1}\lfloor\partial\Omega$. Consider a function 
$k:\Omega\times\partial\Omega\rightarrow\mathbb{R}$ with the property that $k(x,\cdot)$ is a 
$\sigma$-measurable function for each fixed point $x\in\Omega$, and there exists some finite 
constant $C_0>0$ such that 
\begin{equation}\label{eq:3.2-new-hip1}
|k(x,y)|\leq\frac{C_0}{|x-y|^{n-1}}\,\,\text{ for each $x\in\Omega$ and $\sigma$-a.e. }\,\,y\in\partial\Omega.
\end{equation}
Define an integral operator acting on each $f\in L^1(\partial\Omega,\sigma)$ according to 
\begin{equation}\label{6rEE-y77}
{\mathscr{T}}f(x):=\int_{\partial\Omega}k(x,y)f(y)\,d\sigma(y)\,\,\text{ for all }\,\,x\in\Omega,
\end{equation}
and assume 
\begin{equation}\label{gtgttg}
C_1:=\sup_{x\in\Omega}|\mathscr{T}1(x)|<+\infty.
\end{equation}
Finally, with $D\in(0,\infty)$, let $\omega$ be a growth function on $(0,D)$ for which 
\begin{equation}\label{dedqwe}
C_2:=\int_0^{D}\omega(s)\,\frac{ds}{s}<\infty,
\end{equation}
and associate with it the growth function $\widetilde{\omega}$ as in Remark~\ref{rem:wextension}.

Then for every function $f\in\mathscr{C}^{\omega}(\partial\Omega)$ one has
\begin{align}\label{6ggfv-jR}
\sup_{x\in\Omega}|{\mathscr{T}}f(x)| &\leq C\,C_0\,2^{n-1}\,
\Big[\Big(1+\frac{2^{n}}{\ln 4}\log_{+}\big(\tfrac{4\,{\rm diam}\,(\partial\Omega)}{D}\big)\Big)\,\omega(D)+\frac{2^{n}}{\ln 4}\,C_2\Big]
\,[f]_{\dot{\mathscr{C}}^{\widetilde{\omega}}(\partial\Omega)}
\nonumber\\[4pt] 
&\quad+\max\big\{C_1\,,\,C_0\,D^{1-n}\sigma(\partial\Omega)\big\}\cdot\sup_{\partial\Omega}|f|.
\end{align}
\end{lemma}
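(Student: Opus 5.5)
Fix $x\in\Omega$ and $f\in\mathscr{C}^{\omega}(\partial\Omega)$. The plan is to subtract a constant and use $\mathscr{T}1$, splitting into two cases according to the distance from $x$ to $\partial\Omega$ relative to $D$.

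\textbf{Case 1: ${\rm dist}(x,\partial\Omega)\geq D$.} Estimate directly. Using \eqref{eq:3.2-new-hip1} and $|x-y|\geq D$ for all $y\in\partial\Omega$, we get
\[
|\mathscr{T}f(x)|\leq C_0D^{1-n}\sigma(\partial\Omega)\sup_{\partial\Omega}|f|,
\]
which is dominated by the last term in \eqref{6ggfv-jR}.

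\textbf{Case 2: ${\rm dist}(x,\partial\Omega)<D$.} Pick $x_\ast\in\partial\Omega$ with $|x-x_\ast|={\rm dist}(x,\partial\Omega)=:\delta<D$ (possible since $\partial\Omega$ is compact) and write
\[
\mathscr{T}f(x)=\int_{\partial\Omega}k(x,y)\big(f(y)-f(x_\ast)\big)\,d\sigma(y)+f(x_\ast)\,\mathscr{T}1(x).
\]
The second term is at most $C_1\sup_{\partial\Omega}|f|$. In the first, bound $|f(y)-f(x_\ast)|\leq[f]_{\dot{\mathscr{C}}^{\widetilde\omega}}\widetilde\omega(|y-x_\ast|)$. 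The key geometric fact is that $|y-x_\ast|\leq|y-x|+\delta\leq 2|x-y|$, since $\delta\leq|x-y|$ for every $y\in\partial\Omega$. Hence $|k(x,y)|\leq 2^{n-1}C_0|y-x_\ast|^{1-n}$, which accounts for the factor $2^{n-1}$. We are thus reduced to bounding
\[
\int_{\partial\Omega}\frac{\widetilde\omega(|y-x_\ast|)}{|y-x_\ast|^{n-1}}\,d\sigma(y).
\]

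Apply Lemma~\ref{lemma:ijijil} with $d=0$ and $\Sigma:=\partial\Omega$, centered at $x_\ast$, with radius $r=D/2$; the constant there becomes $C\,2^{n-1}/\ln 2=C\,2^n/\ln 4$. This splits the integral into two pieces.
\begin{itemize}
\item[] \emph{Near part.} By \eqref{lkmem}, the contribution of $\overline{B(x_\ast,D/2)}$ is at most $C\frac{2^n}{\ln 4}\int_0^D\widetilde\omega(s)\frac{ds}{s}=C\frac{2^n}{\ln4}C_2$.
\item[] \emph{Far part.} By \eqref{dedewad}, the contribution of the complement is at most $C\frac{2^n}{\ln 4}\int_D^{4\diam(\partial\Omega)}\omega(D)\frac{ds}{s}=C\frac{2^n}{\ln 4}\,\omega(D)\log_+\!\big(4\diam(\partial\Omega)/D\big)$. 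This uses $\widetilde\omega\equiv\omega(D)$ on $[D,\infty)$, and the integral is empty when $4\diam(\partial\Omega)\le D$.
\end{itemize}

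The remaining term $C\,C_0\,2^{n-1}\omega(D)$ in \eqref{6ggfv-jR} is presumably there as a safety margin. It may be needed to bridge the gap between the ball $B(x_\ast,D/2)$ and the closed ball in \eqref{lkmem}, or to pick up an annulus if I choose $r=D$ instead: in that case I would bound the shell $D\le|y-x_\ast|\le 2D$ crudely by $\omega(D)\,C$ through upper Ahlfors regularity. I expect this bookkeeping of constants, namely matching exactly the displayed coefficients $1+\frac{2^n}{\ln4}\log_+$, to be the only delicate point. The analytic content is just the comparison $|y-x_\ast|\le2|x-y|$.

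Finally, the cases combine by taking the maximum of the $\sup|f|$ coefficients, $\max\{C_1,\,C_0D^{1-n}\sigma(\partial\Omega)\}$, and taking the supremum over $x\in\Omega$.
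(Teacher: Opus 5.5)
Your proof is correct, and it gives a slightly stronger inequality than \eqref{6ggfv-jR}.

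The paper sets $r:=\dist(x,\partial\Omega)$ and $B_\ast:=B(x_\ast,r)$, and splits $\partial\Omega$ at $2B_\ast$. On $\partial\Omega\cap 2B_\ast$ it uses only $|x-y|\geq r$, $\widetilde\omega\leq\omega(D)$, and upper Ahlfors regularity. That crude estimate is the source of the term $C\,C_0\,2^{n-1}\omega(D)$, i.e.\ the ``$1$'' in $1+\frac{2^n}{\ln 4}\log_{+}$. On $\partial\Omega\setminus 2B_\ast$ it uses $|y-x_\ast|\leq 2|x-y|$ with \eqref{dedewad} at radius $2r$, and bounds $\int_{4r}^{4\diam(\partial\Omega)}\widetilde\omega(s)\,\frac{ds}{s}\leq C_2+\omega(D)\log_{+}\big(4\diam(\partial\Omega)/D\big)$.

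You observed that $|y-x_\ast|\leq 2|x-y|$ holds for every $y\in\partial\Omega$. So after splitting at the fixed radius $D/2$, the two halves of Lemma~\ref{lemma:ijijil} give exactly the $C_2$ and $\log_{+}$ terms, and the crude near-field term never appears. Your closing speculation about a safety margin is therefore unnecessary. The sets $\overline{B(x_\ast,D/2)}$ and $\partial\Omega\setminus B(x_\ast,D/2)$ cover $\partial\Omega$; they overlap on a sphere, which is harmless for a nonnegative integrand. There is no gap to bridge, and you may simply add the nonnegative quantity $C\,C_0\,2^{n-1}\omega(D)[f]_{\dot{\mathscr{C}}^{\widetilde{\omega}}(\partial\Omega)}$ to your bound. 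Your Case~2 argument also never uses $\delta<D$, so Case~1 is in fact dispensable.

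What the paper's scale-$r$ decomposition buys is that it carries over to Lemma~\ref{lemma:3.3-new}. For kernels of size $|x-y|^{-n}$, your global comparison would lead to $\int_0^{D}\widetilde\omega(s)\,\frac{ds}{s^2}$, which diverges in general (e.g.\ for $\widetilde\omega(s)=s^\alpha$ with $\alpha<1$). There the near-field bound via $|x-y|\geq r$ is what produces the factor $W_{\widetilde{\omega}}(r)$.
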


\begin{proof}
Pick an arbitrary function $f\in{\mathscr{C}}^{\omega}(\partial\Omega)\subseteq L^1(\partial\Omega,\sigma)$ 
and fix some point $x\in\Omega$. If ${\rm dist}(x,\partial\Omega)\geq D$ use \eqref{eq:3.2-new-hip1} to estimate 
\begin{equation}\label{7533-uH}
|\mathscr{T}f(x)|\leq\frac{C_0}{D^{n-1}}\sigma(\partial\Omega)\cdot\sup_{\partial\Omega}|f|.
\end{equation}

Consider next the case when $\mathrm{dist}(x,\partial\Omega)<D$. Pick $x_{*}\in\partial\Omega$ such that
\begin{equation}\label{6fSD-jR}
|x-x_*|=\mathrm{dist}(x,\partial\Omega)=:r\in(0,D),
\end{equation}
and abbreviate $B_{*}:=B(x_{*},r)$. Then we may decompose $\mathscr{T}f(x)={\rm I}+{\rm II}+{\rm III}$, where
\begin{align}\label{6tfff}
{\rm I} &:=\int_{\partial\Omega\cap\,2B_{*}}k(x,y)\,\big[f(y)-f(x_{*})\big]\,d\sigma(y),
\\[6pt]
{\rm II} &:=\int_{\partial\Omega\setminus 2B_{*}}k(x,y)\,\big[f(y)-f(x_*)\big]\,d\sigma(y),
\\[6pt]
{\rm III} &:=(\mathscr{T}1)(x)\,f(x_{*}).
\end{align}
Start by estimating the first term above:
\begin{align}\label{64eed-tRD}
|{\rm I}| &\leq C_0\,[f]_{\dot{\mathscr{C}}^{\widetilde{\omega}}(\partial\Omega)}
\int_{\partial\Omega\cap\,2B_{*}}\frac{\widetilde{\omega}(|y-x_{*}|)}{|x-y|^{n-1}}\,d\sigma(y)
\nonumber\\[4pt]
&\leq C_0\,[f]_{\dot{\mathscr{C}}^{\widetilde{\omega}}(\partial\Omega)}
\frac{\widetilde{\omega}(D)}{r^{n-1}}\,\sigma(\partial\Omega\cap 2\,B_{*})
\nonumber\\[4pt]
&\leq C\,C_0\,2^{n-1}\,\omega(D)\,[f]_{\dot{\mathscr{C}}^{\widetilde{\omega}}(\partial\Omega)}.
\end{align}
where we have used that $\omega$ is non-decreasing, that $r=\dist(x,\partial\Omega)\leq |x-y|$ 
for every $y\in\partial\Omega\cap\,2B_{*}$, and the upper Ahlfors regularity of $\partial\Omega$.

Let us now estimate ${\rm II}$. When $\partial\Omega\setminus 2B_{*}=\varnothing$ we have ${\rm II}=0$. 
When $\partial\Omega\setminus 2B_{*}\neq\varnothing$, for each $y\in\partial\Omega\setminus 2B_{*}$ 
we have $|y-x_{*}|\leq 2\,|y-x|$. This and Lemma~\ref{lemma:ijijil} permit us to estimate 
\begin{align}\label{7t4d4dd}
|{\rm II}| &\leq C_0\,[f]_{\dot{\mathscr{C}}^{\widetilde{\omega}}(\partial\Omega)}
\int_{\partial\Omega\setminus 2B_*}\frac{\widetilde{\omega}(|y-x_*|)}{|x-y|^{n-1}}\,d\sigma(y)
\nonumber\\[4pt]
&\leq 2^{n-1}\,C_0\,[f]_{\dot{\mathscr{C}}^{\widetilde{\omega}}(\partial\Omega)}
\int_{\partial\Omega\setminus 2B_*}\frac{\widetilde{\omega}(|y-x_*|)}{|y-x_*|^{n-1}}\,d\sigma(y)
\nonumber\\[4pt]
&\leq\frac{2^{2\,(n-1)}}{\ln 2}\,C\,C_0 [f]_{\dot{\mathscr{C}}^{\widetilde{\omega}}(\partial\Omega)}
\int_{4\,r}^{4\,{\rm diam}\,(\partial\Omega)}\widetilde{\omega}(s)\frac{ds}{s},
\nonumber\\[4pt]
&\leq\frac{2^{2\,(n-1)}}{\ln 2}\,C\,C_0\,
\big(C_2+\omega(D)\,\log_{+}\big(\tfrac{4\,{\rm diam}\,(\partial\Omega)}{D}\big)\big)\,
[f]_{\dot{\mathscr{C}}^{\widetilde{\omega}}(\partial\Omega)}.
\end{align}

Finally, 
\begin{equation}\label{trffcc}
|{\rm III}|\leq C_1\sup_{\partial\Omega}|f|,
\end{equation} 
so \eqref{6ggfv-jR} follows from \eqref{7533-uH} and \eqref{64eed-tRD}-\eqref{trffcc}.
\end{proof}

Here is a companion result to Lemma~\ref{lemma:3.2-new}, for integral operators whose kernel 
exhibits a stronger singularity at the boundary (compared to \eqref{eq:3.2-new-hip1}).

\begin{lemma}\label{lemma:3.3-new}
Assume $\Omega$ is a nonempty, proper, open subset of $\mathbb{R}^n$ with a compact boundary
satisfying an upper Ahlfors regularity condition with constant $C\in(0,\infty)$, and abbreviate 
$\sigma:={\mathcal{H}}^{n-1}\lfloor\partial\Omega$. Consider a function 
$q:\Omega\times\partial\Omega\rightarrow{\mathbb{R}}$ with the property 
that there exists some finite constant $C_3>0$ such that
\begin{equation}\label{tdcc-QDSD}
|q(x,y)|\leq\frac{C_3}{|x-y|^{n}}\,\,\text{ for each $x\in\Omega$ and $\sigma$-a.e. }\,\,y\in\partial\Omega, 
\end{equation}
and such that $q(x,\cdot)$ is a $\sigma$-measurable function for each fixed point $x\in\Omega$.
Use this to define an integral operator acting on each $f\in L^1(\partial\Omega,\sigma)$ according to 
\begin{equation}\label{yRDXxa}
{\mathscr{Q}}f(x):=\int_{\partial\Omega}q(x,y)f(y)\,d\sigma(y)\,\,\text{ for each }\,\,x\in\Omega.
\end{equation}
Next, fix some $D\in(0,\infty]$ and let $\omega$ be a growth function on $(0,D)$. Associate with 
it the growth function $\widetilde{\omega}$ as in Remark~\ref{rem:wextension} and, further,
the function $W_{\widetilde{\omega}}:(0,\infty)\to(0,\infty)$ defined as in \eqref{W+:general}. 
Finally, set $\rho(x):=\mathrm{dist}(x,\partial\Omega)$ for every $x\in\Omega$, 
and make the assumption that 
\begin{equation}\label{tcwda}
C_4:=\sup_{x\in\Omega}\frac{|{\mathscr{Q}}1(x)|}{W_{\widetilde{\omega}}(\rho(x))}<+\infty.
\end{equation}

Then for each function $f\in{\mathscr{C}}^\omega(\partial\Omega)$ one has
\begin{equation}\label{6fff-j6ii}
\sup_{x\in\Omega}\frac{|{\mathscr{Q}}f(x)|}{W_{\widetilde{\omega}}(\rho(x))}
\leq 2^{n}\,C\,C_3\Big(1+\frac{2^{n+1}}{\ln 2}\Big)[f]_{\dot{\mathscr{C}}^{\widetilde{\omega}}(\partial\Omega)}
+C_4\cdot\sup_{\partial\Omega}|f|.
\end{equation}
\end{lemma}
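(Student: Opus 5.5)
We follow the scheme used for Lemma~\ref{lemma:3.2-new}. Fix $f\in{\mathscr{C}}^\omega(\partial\Omega)$ and $x\in\Omega$. Set $r:=\rho(x)$, choose $x_*\in\partial\Omega$ with $|x-x_*|=r$, and abbreviate $B_*:=B(x_*,r)$. We then split
\begin{equation*}
{\mathscr{Q}}f(x)={\rm I}+{\rm II}+{\rm III},
\end{equation*}
where ${\rm I}$ is the integral of $q(x,y)[f(y)-f(x_*)]$ over $\partial\Omega\cap 2B_*$, ${\rm II}$ is the same integral over $\partial\Omega\setminus 2B_*$, and ${\rm III}:=({\mathscr{Q}}1)(x)f(x_*)$. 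Hypothesis \eqref{tcwda} gives directly $|{\rm III}|\leq C_4W_{\widetilde\omega}(r)\sup_{\partial\Omega}|f|$. This accounts for the second term in \eqref{6fff-j6ii}. It remains to show that $|{\rm I}|+|{\rm II}|\lesssim [f]_{\dot{\mathscr{C}}^{\widetilde{\omega}}(\partial\Omega)}W_{\widetilde\omega}(r)$.

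For ${\rm I}$, we use $|x-y|\geq r$ for $y\in\partial\Omega$, the bound $|y-x_*|<2r$, the monotonicity of $\widetilde\omega$, and upper Ahlfors regularity. Together these give
\begin{equation*}
|{\rm I}|\leq C_3[f]\,\widetilde\omega(2r)\,r^{-n}\,C(2r)^{n-1}.
\end{equation*}
To convert $\widetilde\omega(2r)/r$ into $W_{\widetilde\omega}(r)$ without invoking a doubling constant (none is assumed), we argue directly from the definition:
\begin{equation*}
W_{\widetilde\omega}(r)\geq\int_{2r}^\infty\frac{\widetilde\omega(s)}{s^2}\,ds\geq\widetilde\omega(2r)\int_{2r}^{\infty}\frac{ds}{s^2}=\frac{\widetilde\omega(2r)}{2r}.
\end{equation*}
Consequently $|{\rm I}|\leq 2^{n}CC_3[f]W_{\widetilde\omega}(r)$, which is the constant $2^nCC_3\cdot 1$ appearing in the statement.

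For ${\rm II}$, every $y\notin 2B_*$ satisfies $|y-x_*|\leq|y-x|+r\leq 2|y-x|$. Hence $|q(x,y)|\leq 2^nC_3|y-x_*|^{-n}$, and therefore
\begin{equation*}
|{\rm II}|\leq 2^nC_3[f]\int_{\partial\Omega\setminus B(x_*,2r)}\frac{\widetilde\omega(|y-x_*|)}{|y-x_*|^{n}}\,d\sigma(y).
\end{equation*}
We apply \eqref{dedewad} from Lemma~\ref{lemma:ijijil}, with $d=1$ and radius $2r$. This bounds the last integral by
\begin{equation*}
C\,\frac{2\cdot 2^{n}}{\ln 2}\int_{4r}^{4\diam(\partial\Omega)}\frac{\widetilde\omega(s)}{s}\,\frac{ds}{s}\leq C\,\frac{2^{n+1}}{\ln2}\,W_{\widetilde\omega}(r),
\end{equation*}
since $W_{\widetilde\omega}$ integrates over $(r,\infty)$ and the integrand is nonnegative. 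Combining the three estimates, dividing by $W_{\widetilde\omega}(\rho(x))$ and taking the supremum over $x\in\Omega$ yields \eqref{6fff-j6ii}.

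The only delicate point is the treatment of ${\rm I}$. There one must avoid using the doubling property of $\omega$, which is not among the hypotheses; the observation that $W_{\widetilde\omega}(r)$ dominates $\widetilde\omega(2r)/(2r)$ handles this. A second thing to check is that $W_{\widetilde\omega}(r)$ may be $+\infty$ for some $r$. In that case the corresponding estimate is trivial, because the quotient in \eqref{6fff-j6ii} is then $0$ under the natural convention.
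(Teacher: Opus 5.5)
Your proposal is correct and follows the paper's proof essentially step for step. The paper uses the same splitting ${\rm I}+{\rm II}+{\rm III}$ around the nearest boundary point $x_*$. It handles ${\rm I}$ via $\widetilde\omega(2r)/(2r)\le W_{\widetilde\omega}(2r)\le W_{\widetilde\omega}(r)$, which is \eqref{w+:acdszc} plus monotonicity of $W_{\widetilde\omega}$ and is what you derive directly. It handles ${\rm II}$ with \eqref{dedewad} at $d=1$, radius $2r$, which yields exactly the constant $2^nCC_3\big(1+\tfrac{2^{n+1}}{\ln 2}\big)$.
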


\begin{proof}
Choose an arbitrary function $f\in \mathscr{C}^\omega(\partial\Omega)$ and fix some point $x\in\Omega$. 
Next, pick $x_{*}\in\partial\Omega$ such that
\begin{equation}\label{yrrffd}
|x-x_{*}|=\mathrm{dist}(x,\partial\Omega)=\rho(x)=:r,
\end{equation}
and abbreviate $B_{*}:=B(x_{*},r)$. Then we may decompose ${\mathscr{Q}}f(x)={\rm I}+{\rm II}+{\rm III}$, where
\begin{align}\label{y6f6ff-nf4DCX}
{\rm I} &:=\int_{\partial\Omega\cap\,2B_{*}}q(x,y)\,\big[f(y)-f(x_*)\big]\,d\sigma(y),
\\[4pt]
{\rm II} &:=\int_{\partial\Omega\setminus 2B_*}q(x,y)\,\big[f(y)-f(x_*)\big]\,d\sigma(y),
\\[4pt]
{\rm III} &:=({\mathscr{Q}}1)(x)\,f(x_*).
\end{align}

We may then estimate 
\begin{align}\label{qssq}
|{\rm I}| &\leq C_3\,[f]_{\dot{\mathscr{C}}^{\widetilde{\omega}}(\partial\Omega)}
\int_{\partial\Omega\cap\,2B_{*}}\frac{\omega(|y-x_{*}|)}{|x-y|^{n}}\,d\sigma(y)
\nonumber\\[4pt]
&\leq C_3\,[f]_{\dot{\mathscr{C}}^{\widetilde{\omega}}(\partial\Omega)}
\frac{\widetilde{\omega}(2r)}{r^{n}}\,\sigma(\partial\Omega\cap\,2B_{*})
\nonumber\\[4pt]
&\leq 2^{n-1}C\,C_3\,[f]_{\dot{\mathscr{C}}^{\widetilde{\omega}}(\partial\Omega)}\frac{\widetilde{\omega}(2r)}{r}
\leq 2^{n}\,C\,C_3\,W_{\widetilde{\omega}}(2r)\,[f]_{\dot{\mathscr{C}}^{\widetilde{\omega}}(\partial\Omega)}
\nonumber\\[4pt]
&\leq 2^{n}\,C\,C_3\,W_{\widetilde{\omega}}(r)\,[f]_{\dot{\mathscr{C}}^{\widetilde{\omega}}(\partial\Omega)},
\end{align}
using that $\omega$ is non-decreasing, that $r=\dist(x,\partial\Omega)\leq|x-y|$ 
for every $y\in\partial\Omega$, the upper Ahlfors regularity of $\partial\Omega$, \eqref{w+:acdszc}, 
and the fact that $W_{\widetilde{\omega}}$ is non-increasing.

Next, note that ${\rm II}=0$ if $\partial\Omega\setminus 2B_{*}=\varnothing$. 
When $\partial\Omega\setminus 2B_{*}\neq\varnothing$ we have $|y-x_{*}|\leq 2\,|x-y|$ for each 
$y\in\partial\Omega\setminus 2B_{*}$. As such, we may estimate 
\begin{align}\label{kyiki}
|{\rm II}| &\leq C_3\,[f]_{\dot{\mathscr{C}}^{\widetilde{\omega}}(\partial\Omega)}
\int_{\partial\Omega\setminus 2B_{*}}\frac{\widetilde{\omega}(|y-x_*|)}{|x-y|^{n}}\,d\sigma(y)
\nonumber\\[4pt]
&\leq 2^{n}\,C_3\,[f]_{\dot{\mathscr{C}}^{\widetilde{\omega}}(\partial\Omega)}
\int_{\partial\Omega\setminus 2B_{*}}\frac{\widetilde{\omega}(|y-x_*|)}{|y-x_*|^{n}}\,d\sigma(y)
\nonumber\\[4pt]
&\leq\frac{2^{2n+1}}{\ln 2}\,C\,C_3\,W_{\widetilde{\omega}}(r)\,[f]_{\dot{\mathscr{C}}^{\widetilde{\omega}}(\partial\Omega)},
\end{align}
where for the second inequality we have used \eqref{dedewad} with $d:=1$, and the fact that $W_{\widetilde{\omega}}$ is non-increasing.
Finally, \eqref{tcwda} gives
\begin{equation}\label{hyhyh}
|{\rm III}|\leq C_4\,W_{\widetilde{\omega}}(r)\cdot\sup_{\partial\Omega}|f|.
\end{equation}
Combining \eqref{qssq}, \eqref{kyiki}, \eqref{hyhyh} then yields \eqref{6fff-j6ii}. 
\end{proof}

From Lemmas~\ref{lemma:3.2-new}-\ref{lemma:3.3-new} we then immediately derive the following result.

\begin{lemma}\label{lemma:pokor}
Let $\Omega$ be a nonempty, proper, open subset of $\mathbb{R}^n$ with a compact boundary 
satisfying an upper Ahlfors regularity condition with constant $C\in(0,\infty)$.
Abbreviate $\sigma:={\mathcal{H}}^{n-1}\lfloor\partial\Omega$ and 
$\rho(x):=\mathrm{dist}(x,\partial\Omega)$ for each $x\in\Omega$. In this setting, consider a 
function $K:\Omega\times\partial\Omega\rightarrow{\mathbb{R}}$, which is continuously differentiable 
in the first argument, with the property that there exists some finite constant $A_0>0$ such that 
\begin{equation}\label{yhtghrt}
\begin{array}{c}
|K(x,y)|+|x-y|\,|\nabla_x K(x,y)|\leq A_0|x-y|^{1-n}
\\[6pt]
\text{for each $x\in\Omega$ and $\sigma$-a.e. }\,\,y\in\partial\Omega,
\end{array}
\end{equation}
and such that $K(x,\cdot)$ is a $\sigma$-measurable function for each fixed point $x\in\Omega$.
Define an integral operator acting on each function $f\in L^1(\partial\Omega,\sigma)$ according to 
\begin{equation}\label{744rfd-ij}
{\mathcal{T}}f(x):=\int_{\partial\Omega}K(x,y)f(y)\,d\sigma(y)\,\,\text{ for each }\,\,x\in\Omega.
\end{equation}
Let $\omega$ be a growth function on $(0,D)$, where $D\in(0,\infty)$.
Associate with it the growth function $\widetilde{\omega}$ as in Remark~\ref{rem:wextension}, 
and bring in the function $W_{\widetilde{\omega}}$ associated with $\widetilde{\omega}$ as in \eqref{W+:general}.
In relation to these, assume 
\begin{equation}\label{aadede}
A_1:=\int_0^{D}\omega(s)\,\frac{ds}{s}<+\infty,
\end{equation}
and 
\begin{equation}\label{nhgfnh}
A_2:=\sup_{x\in\Omega}|({\mathcal{T}}1)(x)|
+\sup_{x\in\Omega}\frac{|\nabla({\mathcal{T}}1)(x)|}{W_{\widetilde{\omega}}(\rho(x))}<+\infty.
\end{equation} 

Then for every $f\in{\mathscr{C}}^\omega(\partial\Omega)$ one has
\begin{align}\label{754ew3D}
&\sup_{x\in\Omega}|({\mathcal{T}}f)(x)|+\sup_{x\in\Omega}\frac{|\nabla({\mathcal{T}}f)(x)|}{W_{\widetilde{\omega}}(\rho(x))}
\nonumber\\[6pt]
&\quad\leq C\,A_0\,2^{n-1}\,\Big[\Big(1+\frac{2^{n}}{\ln 4}\log_{+}\big(\tfrac{4\,{\rm diam}\,(\partial\Omega)}{D}\big)\Big)\,\omega(D)
+\frac{2^{n}}{\ln 4}\,A_1+2+\frac{2^{n+2}}{\ln 2}\Big]\,[f]_{\dot{\mathscr{C}}^{\widetilde{\omega}}(\partial\Omega)}
\nonumber\\[6pt] 
&\qquad+\big(2\,A_2+A_0 D^{1-n}\sigma(\partial\Omega)\big)\sup_{\partial\Omega}|f|.
\end{align}

As a consequence, there exists a finite constant $C_{\Omega,n,\omega}>0$ with the property that 
for each function $f\in{\mathscr{C}}^\omega(\partial\Omega)$ one has
\begin{equation}\label{mlmmlm}
\sup_{x\in\Omega}|({\mathcal{T}}f)(x)|
+\sup_{x\in\Omega}\frac{|\nabla(\mathcal{T} f)(x)|}{W_{\widetilde{\omega}}(\rho(x))}
\leq C_{\Omega,n,\omega}\,\big(A_0\,(1+A_1)+A_2\big)\,\|f\|_{{\mathscr{C}}^{\omega}(\partial\Omega)}.
\end{equation}
\end{lemma}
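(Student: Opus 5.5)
The estimate for $\sup_{\Omega}|\mathcal{T}f|$ follows from Lemma~\ref{lemma:3.2-new}, and the gradient estimate from Lemma~\ref{lemma:3.3-new} applied to each partial derivative. The constants in \eqref{754ew3D} should then come out by adding the two resulting bounds.

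\textbf{Zeroth-order part.} Apply Lemma~\ref{lemma:3.2-new} with $k:=K$. Hypothesis \eqref{eq:3.2-new-hip1} holds with $C_0:=A_0$, by \eqref{yhtghrt}. Hypothesis \eqref{gtgttg} holds with $C_1\le A_2$, and \eqref{dedqwe} holds with $C_2:=A_1$. Then \eqref{6ggfv-jR} yields the bracketed terms involving $\omega(D)$, $\log_+$ and $A_1$. It also yields the contribution $\max\{A_2,A_0D^{1-n}\sigma(\partial\Omega)\}\sup|f|$.

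\textbf{Gradient part.} For each $j\in\{1,\dots,n\}$ take $q(x,y):=\partial_{x_j}K(x,y)$. The first step is to justify that $\partial_j(\mathcal{T}f)(x)=\int_{\partial\Omega}\partial_{x_j}K(x,y)f(y)\,d\sigma(y)$, i.e.\ differentiation under the integral sign. Fix $x\in\Omega$. For $x'$ in the ball $B(x,\rho(x)/2)$ and all $y\in\partial\Omega$ we have $|x'-y|\ge\rho(x)/2$. Hence \eqref{yhtghrt} gives the uniform bound $|\nabla_xK(x',y)|\le A_0 2^n\rho(x)^{-n}$, which is $\sigma$-integrable because $\sigma(\partial\Omega)<\infty$ (compact, upper Ahlfors regular boundary). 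Together with the $\mathscr{C}^1$ dependence on the first argument, dominated convergence applies. The identity gives $\partial_j\mathcal{T}=\mathscr{Q}$ with $\mathscr{Q}$ as in \eqref{yRDXxa}; in particular $\mathscr{Q}1=\partial_j(\mathcal{T}1)$. Now check the hypotheses of Lemma~\ref{lemma:3.3-new}. Condition \eqref{tdcc-QDSD} holds with $C_3:=A_0$, and \eqref{tcwda} holds with $C_4\le A_2$ by \eqref{nhgfnh}. Then \eqref{6fff-j6ii} bounds $|\partial_j\mathcal{T}f(x)|/W_{\widetilde\omega}(\rho(x))$.

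\textbf{Combining.} To obtain the full gradient rather than componentwise bounds, I would rerun the proof of Lemma~\ref{lemma:3.3-new} directly with the vector kernel $\nabla_xK$. That proof uses only the size bound $|q|\le C_3|x-y|^{-n}$ and the hypothesis on $\mathscr{Q}1$, so it applies verbatim to vector-valued kernels with $|\cdot|$ the Euclidean norm, and avoids a dimensional factor $n$. Its output, $2^nCA_0(1+2^{n+1}/\ln2)[f]+A_2\sup|f|$, matches the terms $2CA_02^{n-1}+CA_02^{n-1}2^{n+2}/\ln 2$ in \eqref{754ew3D}. Bounding $\max\{A_2,\cdot\}+A_2$ by $2A_2+A_0D^{1-n}\sigma(\partial\Omega)$ then gives \eqref{754ew3D}.

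\textbf{The consequence \eqref{mlmmlm}.} Since $\omega$ is fixed, $\omega(D)$, $\diam(\partial\Omega)$, $D$ and $\sigma(\partial\Omega)$ are all finite constants depending only on $\Omega,n,\omega$. Also $[f]_{\dot{\mathscr{C}}^{\widetilde\omega}}$ and $\sup|f|$ are each at most $\|f\|_{\mathscr{C}^\omega(\partial\Omega)}$. Every constant in front of $[f]$ has the form $A_0(c+c'A_1)$ and every constant in front of $\sup|f|$ has the form $2A_2+cA_0$, so both are bounded by a multiple of $A_0(1+A_1)+A_2$.

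The main obstacle is the justification of differentiation under the integral and the bookkeeping of constants. This is only mildly delicate, because the kernels are uniformly bounded at positive distance from $\partial\Omega$.
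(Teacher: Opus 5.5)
Your proposal is correct and follows the paper's route. The paper derives this lemma ``immediately'' from Lemma~\ref{lemma:3.2-new} applied to $K$ and Lemma~\ref{lemma:3.3-new} applied to $\nabla_xK$, and your bookkeeping reproduces \eqref{754ew3D}: $2^nCA_0(1+2^{n+1}/\ln 2)=CA_02^{n-1}(2+2^{n+2}/\ln 2)$, and $\max\{A_2,A_0D^{1-n}\sigma(\partial\Omega)\}+A_2\le 2A_2+A_0D^{1-n}\sigma(\partial\Omega)$.

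The details you add are exactly what the paper leaves implicit: differentiating under the integral sign, and running Lemma~\ref{lemma:3.3-new} with the vector kernel so no dimensional factor appears. One small fix on the first point: the exceptional null set in \eqref{yhtghrt} may depend on $x$. Take the union over a countable dense set of points in $B(x,\rho(x)/2)$ and use continuity of $\nabla_xK(\cdot,y)$; this gives a single null set outside of which the domination holds for all points of the ball.
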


We continue by estimating the generalized H\"older norm of a ${\mathscr{C}}^1$ function 
defined in a uniform domain (cf. Definition~\ref{tFCa-u6fr}).

\begin{lemma}\label{lemma:fokogtr}
Suppose $\Omega\subset{\mathbb{R}}^n$ is a uniform domain. Recall the constant $\varkappa\in[1,\infty)$ 
appearing in \eqref{7g4dd} and abbreviate $\rho(x):=\mathrm{dist}(x,\partial\Omega)$ for each $x\in\Omega$. 
In addition, fix some $D\in(0,\infty]$ and consider a growth function $\omega$ on $(0,D)$ which is doubling 
and Dini. Associate with it $\widetilde{\omega}$ as in Remark~\ref{rem:wextension}, and associate with 
$\widetilde{\omega}$ the function $W_{\widetilde{\omega}}$ as in \eqref{W+:general}. 

Then there exists a finite constant $C=C(\omega,D,\varkappa)>0$ with the property 
that for each function $u\in{\mathscr{C}}^1(\Omega)$ one has
\begin{equation}\label{eq:deddoo}
\norm{u}_{\mathscr{C}^{\omega_{\!Z}}(\Omega)}\leq C\bigg(\sup_{x\in\Omega}|u(x)|
+\sup_{x\in\Omega}\frac{|(\nabla u)(x)|}{W_{\widetilde{\omega}}(\rho(x))}\bigg).
\end{equation}
\end{lemma}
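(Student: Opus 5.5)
The plan is to bound $\sup_\Omega|u|$ trivially by the right-hand side of \eqref{eq:deddoo}. It then remains to bound the seminorm $[u]_{\dot{\mathscr{C}}^{\widetilde{\omega_Z}}(\Omega)}$. Set $M:=\sup_{x\in\Omega}|\nabla u(x)|/W_{\widetilde{\omega}}(\rho(x))$ and fix distinct points $x,y\in\Omega$. Put $t:=|x-y|$ and let $\gamma$ be the path provided by Definition~\ref{tFCa-u6fr}, parametrized by arc length on $[0,L]$ with $L=\length(\gamma)\leq\varkappa t$. By the fundamental theorem of calculus along $\gamma$,
\begin{equation*}
|u(x)-u(y)|\leq\int_0^L|\nabla u(\gamma(s))|\,ds\leq M\int_0^L W_{\widetilde{\omega}}\big(\rho(\gamma(s))\big)\,ds .
\end{equation*}

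The key point is the cigar condition in \eqref{7g4dd}: $\rho(\gamma(s))\geq\varkappa^{-1}\min\{s,L-s\}$. Since $W_{\widetilde{\omega}}$ is non-increasing (Lemma~\ref{6gfdc}), splitting the integral at $L/2$ gives
\begin{equation*}
\int_0^L W_{\widetilde{\omega}}(\rho(\gamma(s)))\,ds\leq 2\int_0^{L/2}W_{\widetilde{\omega}}(s/\varkappa)\,ds=2\varkappa\int_0^{L/(2\varkappa)}W_{\widetilde{\omega}}(\tau)\,d\tau\leq 2\varkappa\int_0^{t/2}W_{\widetilde{\omega}}(\tau)\,d\tau .
\end{equation*}
When $D=\infty$, \eqref{dewdead-ttt} identifies the last integral as $\omega_Z(t/2)\leq\omega_Z(t)$. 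When $D<\infty$ and $t/2<D$, \eqref{dewdead-ttt.xxx} bounds it by $C\omega_Z(t/2)\leq C\,\widetilde{\omega_Z}(t)$; the doubling hypothesis on $\omega$ enters here.

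The remaining case is $D<\infty$ with $t$ large, say $t\geq D$ (or $t/2\geq D$). Here I would not use the path. Instead I would use the crude bound $|u(x)-u(y)|\leq 2\sup_\Omega|u|$ together with $\widetilde{\omega_Z}(t)=\omega_Z(D)\geq\omega_Z(D/2)>0$, which is a fixed positive constant depending only on $\omega$ and $D$. This gives $|u(x)-u(y)|\leq 2\,\omega_Z(D/2)^{-1}\,\widetilde{\omega_Z}(t)\sup_\Omega|u|$.

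To cover the intermediate range $D\leq t<2D$ I would alternatively use the path estimate with $\int_0^{t/2}W_{\widetilde{\omega}}$. By \eqref{W+:geTT} this equals $\omega_Z(\min\{t/2,D\})$ plus at most $t\,\omega(D)/D\leq 2\omega(D)$, and that is again controlled by a multiple of $\omega_Z(D)$ via \eqref{wnonin-hGFF.cz.UU}. Combining the cases yields $[u]\leq C(\omega,D,\varkappa)\big(\sup|u|+M\big)$, hence \eqref{eq:deddoo}.

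The main obstacle is the careful bookkeeping at scales comparable to or exceeding $D$, where $\widetilde{\omega}$ and $\omega_Z$ are extended by constants. The path-integral step itself is routine once the cigar condition is applied. A secondary point is justifying the chain-rule estimate along a merely rectifiable path; this is standard, since $u\circ\gamma$ is Lipschitz on compact subarcs inside $\Omega$.
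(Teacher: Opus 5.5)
Your proposal is correct and follows essentially the same route as the paper: the fundamental theorem of calculus along the uniform-domain path, then the cigar condition combined with the monotonicity of $W_{\widetilde{\omega}}$ and a change of variables, then \eqref{dewdead-ttt.xxx} (where doubling enters) when $|x-y|<D$, and the crude bound $2\sup_\Omega|u|/\omega_Z(D)$ when $|x-y|\geq D$. Your alternative treatment of the range $D\leq t<2D$ via \eqref{W+:geTT} is valid but redundant, since the crude bound already covers every $t\geq D$.
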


\begin{proof}
Fix two distinct points $x,y\in\Omega$ and assume first that $|x-y|<D$. Since $\Omega$ is a uniform 
domain with constant $\varkappa\in[1,\infty)$, there exists a rectifiable curve $\gamma:[0,L]\to\Omega$  
(written using the arc-length parametrization) joining $x$ with $y$ within $\Omega$ and satisfying 
$L=\length(\gamma)\leq\varkappa|x-y|$ as well as
\begin{equation}\label{eq:lmkkokm}
\min\{s,L-s\}\leq\varkappa\,\mathrm{dist}(\gamma(s),\partial\Omega)=\varkappa\,\rho(\gamma(s))
\,\,\text{ for each }\,\,s\in[0,L].
\end{equation}
Since $\gamma$ is a.e. differentiable and $|d\gamma/ds|=1$ for almost every $s\in(0,L)$, 
for any given function $u\in\mathscr{C}^1(\Omega)$ we may write
\begin{align}\label{yuhguhg}
|u(x)-u(y)| &=\left|\int_{0}^L\frac{d}{ds}\big[u(\gamma(s))]\,ds\right|
\leq\int_0^L |(\nabla u)(\gamma(s))|\,ds
\nonumber\\[4pt] 
&\leq\sup_{x\in\Omega}\frac{|(\nabla u)(x)|}{W_{\widetilde\omega}(\rho(x))}\,
\int_0^L W_{\widetilde\omega}\big(\rho(\gamma(s))\big)\,ds.
\end{align}
On the other hand, since $W_{\widetilde\omega}$ is non-increasing, on account of \eqref{eq:lmkkokm} we may write
\begin{align}\label{eeerer}
\int_0^L W_{\widetilde\omega}\big(\rho(\gamma(s))\big)\,ds
&\leq\int_0^L W_{\widetilde\omega}\big(\varkappa^{-1}\,\min\{s,L-s\}\big)\,ds
\nonumber\\[6pt]
&=2\,\int_0^{\frac{L}{2}}W_{\widetilde\omega}(\varkappa^{-1}\,s)\,ds
=2\,\varkappa\,\int_0^{\frac{L}{2\varkappa}}W_{\widetilde\omega}(s)\,ds
\nonumber\\[6pt]
&\leq 2\,\varkappa\,\int_0^{|x-y|}W_{\widetilde\omega}(s)\,ds\leq C\omega_Z(|x-y|).
\end{align}
For the first inequality in \eqref{eeerer} we have also used the fact that $L\leq\varkappa|x-y|$, while the last 
inequality in \eqref{eeerer} is based on \eqref{dewdead-ttt.xxx}. Combining \eqref{yuhguhg} with \eqref{eeerer} yields 
\begin{equation}\label{lemeq1}
|u(x)-u(y)|\leq C\,\Big(\sup_{x\in\Omega}
\frac{|(\nabla u)(x)|}{W_{\widetilde{\omega}}(\rho(x))}\Big)\,\omega_Z(|x-y|)
\end{equation}
for all distinct points $x,y\in\Omega$ such that $|x-y|<D$. If $D\in(0,\infty)$, then for any $x,y\in\Omega$ 
satisfying $|x-y|>D$ we may write 
\begin{equation}\label{lemeq2}
\frac{|u(x)-u(y)|}{\widetilde{\omega_Z}(|x-y|)}\leq\frac{2}{\omega_Z(D)}\sup_{z\in\Omega}|u(z)|.
\end{equation}
From \eqref{lemeq1} and \eqref{lemeq2} the desired estimate now follows. 
\end{proof}

\section{Proof of Theorem~\ref{theor:uewmp.222}}
\label{KJB.D+M}

The proof of Theorem~\ref{theor:uewmp.222}, presented at the end of this section, relies on the material developed thus far, 
but additional work is required. First, in Theorem~\ref{thm:GHA-T1} below, we establish a geometrically sharper form of the boundedness 
statement appearing in item~\textit{(i)} of Theorem~\ref{theor:uewmp.222}. Second, Theorem~\ref{thm:GHA-T1.JUMP}, stated shortly thereafter, 
anticipates the boundary trace result asserted in item~\textit{(ii)} of Theorem~\ref{theor:uewmp.222}.

\begin{theorem}\label{thm:GHA-T1}
Let $n\in{\mathbb{N}}$ be such that $n\geq 2$. Assume $\Omega\subset\mathbb{R}^n$ is an open set with a compact upper Ahlfors 
regular boundary, satisfying ${\mathcal{H}}^{n-1}(\partial\Omega\setminus\partial_\ast\Omega)=0$.
Abbreviate $\sigma:={\mathcal{H}}^{n-1}\lfloor\partial\Omega$ and denote by $\nu$ the geometric 
measure theoretic outward unit normal to $\Omega$. Consider a vector-valued function 
$\vec{k}\in\big[{\mathscr{C}}^2({\mathbb{R}}^n\setminus\{0\})\big]^n$ which is odd, divergence-free, and 
positive homogeneous of degree $1-n$ and associate with it the complex number $\vartheta$ as in \eqref{kjshgh.FF.D+M}.
Suppose $\omega:\big(0,\diam(\partial\Omega)\big)\to(0,\infty)$ is a doubling Dini growth function, and 
associate with it its Zygmund pair, $\omega_Z$ as in \eqref{omega-cond:main}.

Then for each $f\in{\mathscr{C}}^{\omega}(\partial\Omega)$ the generalized double layer potential operator
\begin{align}\label{16778.bbb.222.GDL.D+M}
Tf(x):=\lim_{\varepsilon\to 0^{+}}\int\limits_{\substack{y\in\partial\Omega\\ |x-y|>\varepsilon}}
\langle\nu(y),\vec{k}(x-y)\rangle f(y)\,d\sigma(y)\,\,\text{ for $\sigma$-a.e. }\,\,x\in\partial\Omega,
\end{align}
is meaningfully defined and coincides, outside of a $\sigma$-nullset, with a function in ${\mathscr{C}}^{\omega_Z}(\partial\Omega)$.
Thus interpreted, $T$ induces a well-defined, linear, and bounded mapping 
\begin{equation}\label{Jgsag.TTT.222.D+M}
T:{\mathscr{C}}^{\omega}(\partial\Omega)\longrightarrow{\mathscr{C}}^{\omega_Z}(\partial\Omega).
\end{equation}
In particular, under the assumption that $\omega$ is a Zygmund modulus of continuity, i.e., there exists $C\in(0,\infty)$ for which 
\begin{equation}\label{omega-cond:main.333.D+M}
\int_0^{t}\omega(s)\frac{ds}{s}+t\,\int_t^{\diam(\partial\Omega)}\frac{\omega(s)}{s}\,\frac{ds}{s}
\leq C\omega(t)\,\,\text{ for each }\,\,t\in\big(0,\diam(\partial\Omega)\big),
\end{equation}
it follows that 
\begin{equation}\label{Jgsag.TTT.333.D+M}
T:{\mathscr{C}}^{\omega}(\partial\Omega)\longrightarrow{\mathscr{C}}^{\omega}(\partial\Omega)
\end{equation}
is a well-defined, linear, and bounded mapping.
\end{theorem}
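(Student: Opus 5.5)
The plan is to prove a Plemelj-Privalov type estimate directly on $\partial\Omega$, with the divergence-free structure of $\vec{k}$ standing in for the absence of uniform rectifiability. The first task is to show that $T1$ is a bounded, essentially constant function. Fix $x\in\partial\Omega$ with $x\in\partial^\ast\Omega$ and $\varepsilon>0$. Apply the divergence theorem \eqref{WRTP22-1-GF} to $\vec F(y):=\vec{k}(x-y)$ on $\Omega\setminus\overline{B(x,\varepsilon)}$, using ${\rm div}\,\vec{k}=0$ after a cutoff near $x$ (and near infinity if $\Omega$ is unbounded). This expresses the truncated integral
\[
\int_{\partial\Omega\setminus B(x,\varepsilon)}\langle\nu(y),\vec{k}(x-y)\rangle\,d\sigma(y)
\]
as a flux through $\Omega\cap\partial B(x,\varepsilon)$. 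By homogeneity, that flux equals $\int_{S^{n-1}}\langle\cdot,\vec k\rangle$ over the rescaled set $\varepsilon^{-1}(\Omega-x)\cap S^{n-1}$, which tends to a half-sphere at points of the reduced boundary. Oddness of $\vec k$ then gives the limit $\pm\vartheta/2$ (or $\vartheta$, $0$, according to the sign convention). Thus $T1$ exists and equals a constant $\sigma$-a.e. More importantly, the same identity yields the uniform bound
\[
\sup_{x\in\partial\Omega,\ \varepsilon>0}\Big|\int_{\partial\Omega\setminus B(x,\varepsilon)}\langle\nu,\vec{k}(x-\cdot)\rangle\,d\sigma\Big|\le C\|\vec k\|_{L^\infty(S^{n-1})},
\]
and also, for $x\in\partial\Omega$ and $0<r<R$, a bound on the annular integrals $\int_{\partial\Omega\cap(B(x,R)\setminus B(x,r))}$. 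These integrals are differences of two sphere fluxes, so they are bounded independently of $r$ and $R$.

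Next, for $f\in\mathscr{C}^\omega(\partial\Omega)$ I would write $Tf(x)=\lim_\varepsilon\int_{|x-y|>\varepsilon}K(x,y)(f(y)-f(x))\,d\sigma(y)+f(x)\,T1(x)$, where $K(x,y):=\langle\nu(y),\vec k(x-y)\rangle$. The first integral converges absolutely by Lemma~\ref{lemma:ijijil} with $d=0$ and the Dini condition. This shows $Tf$ is defined a.e., and by the second term $\sup|Tf|\lesssim\|f\|_{\mathscr{C}^\omega}$.

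For the H\"older estimate, fix $x,z$ in a full-measure good set with $\delta:=|x-z|$, and set $B:=B(x,2\delta)$. I would split
\[
Tf(x)-Tf(z)={\rm I}_x-{\rm I}_z+{\rm II}+{\rm III}.
\]
Here ${\rm I}_x:=\int_{\partial\Omega\cap B}K(x,y)(f(y)-f(x))\,d\sigma(y)$ and ${\rm I}_z$ is the analogous term with $z$ in place of $x$. Each is bounded by $\int_0^{C\delta}\omega(s)\,ds/s$ via \eqref{lkmem}. The far-region term is ${\rm II}:=\int_{\partial\Omega\setminus B}[K(x,y)-K(z,y)](f(y)-f(x))\,d\sigma(y)$. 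Since $\vec k\in\mathscr C^2$ is homogeneous of degree $1-n$, we have $|K(x,y)-K(z,y)|\lesssim\delta|x-y|^{-n}$, and \eqref{dedewad} with $d=1$ bounds ${\rm II}$ by $\delta\int_{4\delta}^{\diam}\omega(s)\,s^{-2}\,ds$. What remains collects the constants against which $f(x)$ and $f(z)$ are multiplied:
\[
{\rm III}:=(f(z)-f(x))\int_{\partial\Omega\setminus B}K(z,y)\,d\sigma(y)+f(x)\big[T1(x)-T1(z)\big]+\text{boundary pieces}.
\]
The first piece is bounded by $\omega(\delta)\,\|f\|$ thanks to the uniform truncated bound from the first step. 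The second vanishes since $T1$ is a.e. constant, up to the annulus $B\setminus B(z,\varepsilon)$ for $z$, which is controlled by the annular flux bound. Summing the pieces gives $|Tf(x)-Tf(z)|\lesssim\omega_Z(\delta)\|f\|_{\mathscr C^\omega}$. For $\delta$ comparable to $\diam(\partial\Omega)$ the sup bound suffices. Redefining $Tf$ on a null set then produces the $\mathscr C^{\omega_Z}$ representative, using Lemma~\ref{holderclosure} and the doubling of $\omega_Z$ from \eqref{wnonin-hGFF.cz.UU}. Under \eqref{omega-cond:main.333.D+M} we have $\omega_Z\lesssim\omega$, which yields \eqref{Jgsag.TTT.333.D+M}.

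The main obstacle is the first step: obtaining the divergence-theorem flux identity on $\Omega\setminus B(x,\varepsilon)$ rigorously for a merely Ahlfors regular (non-UR) set, and making the bounds uniform in $x$ rather than only a.e. I would first try applying \eqref{WRTP22-1-GF} to $\vec k(x-\cdot)\,\psi_\eta(\cdot)$, with $\psi_\eta$ a smooth cutoff vanishing on $B(x,\varepsilon)$. This handles a.e. $\varepsilon$ via the coarea formula, and letting $\eta\to0$ expresses the truncated boundary integral as $\int_{\Omega\cap\partial B(x,\varepsilon)}\langle\tfrac{y-x}{\varepsilon},\vec k(x-y)\rangle\,d\mathcal H^{n-1}$. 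That integral is trivially bounded by $\varpi_{n-1}\|\vec k\|_{L^\infty(S^{n-1})}$, which gives exactly the uniform control needed in the first step.
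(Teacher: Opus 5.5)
Your proposal is essentially the paper's proof. It uses the same pointwise formula $Tf(x)=\mp\tfrac{\vartheta}{2}f(x)+\int_{\partial\Omega}\langle\nu(y),\vec{k}(x-y)\rangle\big(f(y)-f(x)\big)\,d\sigma(y)$, the same near/far split at radius $2|x-z|$ (handled by \eqref{lkmem}, and by kernel smoothness plus \eqref{dedewad} with $d=1$), and the same uniform bound on truncated integrals for the term carrying $f(x)-f(z)$. The only difference is that you derive the constancy of $T1$ and that truncation bound from the divergence theorem, whereas the paper imports them from \cite[Proposition~2.5.16 and (2.5.151)]{GHA.III}.

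A few small repairs are needed:

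\begin{enumerate}
\item In ${\rm III}$ the ball is centered at $x$ while the kernel is $K(z,\cdot)$. Either factor $f(z)-f(x)$ against $K(x,\cdot)$ as the paper does, or absorb $\partial\Omega\cap B(z,3\delta)\setminus B(x,2\delta)$ by a trivial size estimate.
\item Define $Tf$ at every point of $\partial\Omega$ by the formula above instead of extending from a good set. Under mere upper Ahlfors regularity a full-measure set need not be dense in $\partial\Omega$; a punctured ball already satisfies the hypotheses.
\item $L^1_{\rm loc}$ convergence of the blow-ups only controls the sphere slices on average in the radius. Identifying $\lim_{\varepsilon\to 0}$ of the truncated integrals therefore needs a standard extra step, for instance density one of $\sigma$ at reduced-boundary points to control thin annuli.
\end{enumerate}
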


\begin{proof}
Recall that an exterior domain is defined as the complement of a compact set in $\mathbb{R}^n$. Since 
$n\geq 2$ and $\partial\Omega$ is compact, by \cite[Lemma~5.10.10, p.\,469]{GHA.I}, $\Omega$ is an exterior domain 
if and only if $\Omega$ is unbounded. With $\vartheta$ as in \eqref{kjshgh.FF.D+M}, \cite[Proposition~2.5.16, pp.\,438--439]{GHA.III} 
ensures that, at $\sigma$-a.e. point on $\partial \Omega$,
\begin{equation}\label{eq:double-T1}
T1=
\left\lbrace
\begin{array}{ll}
-\vartheta/2, & \text{if } \Omega \text{ is bounded}, 
\\[8pt]
+\vartheta/2, & \text{if } \Omega \text{ is an exterior domain}. 
\end{array}
\right.
\end{equation}

Going further, set $D:=\diam(\partial\Omega)\in(0,\infty)$ and let $\widetilde{\omega}$ be associated with $\omega$ as in 
Remark~\ref{rem:wextension}. Also, fix some $f\in{\mathscr{C}}^\omega(\partial\Omega)$.
From the properties of $\vec{k}$ and \eqref{lkmem}-\eqref{dedewad} we see that there exists some $C\in(0,\infty)$, which depends 
only on $\vec{k}$, the upper Ahlfors regularity constant of $\partial\Omega$ and the doubling constant
of $\omega$, such that for every $x\in\partial\Omega$ we have
\begin{align}\label{eq:sio-deadpo} 
\int_{\partial\Omega} &\left|\langle\nu(y),\vec{k}(x-y)\rangle(f(y)-f(x))\right|\,d\sigma(y)
\leq C \int_{\partial\Omega}\frac{|f(y)-f(x)|}{|x-y|^{n-1}}\,d\sigma(y)
\nonumber\\[4pt] 
&\qquad\leq C\,[f]_{\dot{\mathscr{C}}^{\widetilde{\omega}}(\partial\Omega)}\,
\int_{\partial\Omega}\frac{\widetilde{\omega}(|x-y|)}{|x-y|^{n-1}}\,d\sigma(y)
\leq C\,[f]_{\dot{\mathscr{C}}^{\widetilde{\omega}}(\partial\Omega)}
\int_{0}^{4D}\widetilde{\omega}(s)\,\frac{ds}{s}
\nonumber\\[4pt]
&\qquad\leq C\,[f]_{\dot{\mathscr{C}}^{\widetilde{\omega}}(\partial\Omega)}
\int_{0}^{D}\omega(s)\,\frac{ds}{s}<+\infty,
\end{align}
where in the penultimate inequality we have made the change of variables $s\mapsto 4s$ and used the doubling 
property of $\omega$ and \eqref{wnonin-hGFF.cz}. Granted this, for $\sigma$-a.e. $x\in\partial\Omega$ we may write 
(based on Lebesgue's Dominated Convergence Theorem and \eqref{eq:double-T1}) 
\begin{align}\label{eq:sio-ffrc}
&\lim_{\varepsilon\to 0^{+}}\,\int\limits_{\substack{y\in\partial\Omega\\ |x-y|>\varepsilon}}
\langle\nu(y),\vec{k}(x-y)\rangle f(y)\,d\sigma(y),
\nonumber\\[4pt]
&\quad
=\lim_{\varepsilon\to 0^{+}}\,\int\limits_{\substack{y\in\partial\Omega\\|x-y|>\varepsilon}}
\langle\nu(y),\vec{k}(x-y)\rangle(f(y)-f(x))\,d\sigma(y)\mp\frac{\vartheta}{2}\,f(x)
\nonumber\\[4pt]
&\quad
=\int\limits_{\partial\Omega}
\langle\nu(y),\vec{k}(x-y)\rangle(f(y)-f(x))\,d\sigma(y)\mp\frac{\vartheta}{2}\,f(x),
\end{align}
where the sign of $\frac{\vartheta}{2}\,f(x)$ is minus if $\Omega$ is bounded and plus if $\Omega$ is unbounded. 
This implies that for $\sigma$-a.e. $x\in\partial\Omega$ the limit in the second line of \eqref{eq:sio-ffrc} exists. 
Thus, up to redefining $T$ on a nullset for $\sigma$, we may assume that at {\it every} point $x\in\partial\Omega$ we have  
\begin{equation}\label{eq:sio-abfhtj}
Tf(x)=\mp\frac{\vartheta}{2}\,f(x)+\,
\int_{\partial\Omega}\langle\nu(y),\vec{k}(x-y)\rangle(f(y)-f(x))\,d\sigma(y),
\end{equation}
where $\mp$ depends on $\Omega$ being bounded or unbounded, as explained above.

Our next goal is to show that the operator \eqref{eq:sio-abfhtj} is well-defined and bounded from
$\mathscr{C}^\omega(\partial\Omega)$ into $\mathscr{C}^\omega(\partial\Omega)$. To this end, fix two distinct 
points $x_1,x_2\in\partial\Omega$ and use \eqref{eq:sio-abfhtj} to decompose
\begin{equation}\label{eq:sio-qtfew}
Tf(x_1)-Tf(x_2)={\rm I}+{\rm II}, 
\end{equation}
where
\begin{equation}\label{eq:sio-reecd}
{\rm I}:=\mp\frac{\vartheta}{2}\,(f(x_1)-f(x_2)),
\end{equation}
and
\begin{equation}\label{eq:sio-alpp}
{\rm II}:=\int_{\partial\Omega}\big(\langle\nu(y),\vec{k}(x_1-y)\rangle(f(y)-f(x_1))
-\langle\nu(y),\vec{k}(x_2-y)\rangle(f(y)-f(x_2))\big)\,d\sigma(y).
\end{equation}
Set $r:=|x_1-x_2|\in(0,D)$ and note that this and \eqref{wnonin-hGFF.cz.UU} entails 
\begin{equation}\label{eq:sio-qppe}
|{\rm I}|\leq\frac{|\vartheta|}{2}\,[f]_{\dot{\mathscr{C}}^{\widetilde{\omega}}(\partial\Omega)}\,
\omega(r)\leq C\,[f]_{\dot{\mathscr{C}}^{\widetilde{\omega}}(\partial\Omega)}\,
\omega_Z(r),
\end{equation}
where $C\in(0,\infty)$ depends only on $\vartheta$ and the doubling constant of $\omega$.
Also, we may bound 
\begin{equation}\label{eq:sio-oioeew}
|{\rm II}|\leq C\big({\rm II}_1+{\rm II}_2+{\rm II}_3\big),
\end{equation}
where
\begin{align}\label{eq:sio-alpp***}
{\rm II}_1 &:=\bigg|\,\int\limits_{\substack{y\in\partial\Omega\\ |x_1-y|>2r}}
\big(\langle\nu(y),\vec{k}(x_1-y)\rangle(f(y)-f(x_1))
\nonumber\\[4pt] 
&\qquad\qquad -\langle\nu(y),\vec{k}(x_2-y)\rangle(f(y)-f(x_2))\,\big)\,d\sigma(y)\bigg|,
\end{align}
\begin{equation}\label{eq:sio-654r4e-uyt.1}
{\rm II}_2:=[f]_{\dot{\mathscr{C}}^{\widetilde{\omega}}(\partial\Omega)}\,
\int\limits_{\substack{y\in\partial\Omega\\ |x_1-y|\leq 2r}}\frac{\widetilde{\omega}(|x_1-y|)}{|x_1-y|^{n-1}}\,d\sigma(y),
\end{equation}
and
\begin{align}\label{eq:sio-654r4e-uyt.2}
{\rm II}_3 &:=\,[f]_{\dot{\mathscr{C}}^{\widetilde{\omega}}(\partial\Omega)}\,
\int\limits_{\substack{y\in\partial\Omega\\ |x_1-y|\leq 2r}}\frac{\widetilde{\omega}(|x_2-y|)}{|x_2-y|^{n-1}}\,d\sigma(y)
\nonumber\\[4pt] 
&\leq[f]_{\dot{\mathscr{C}}^{\widetilde{\omega}}(\partial\Omega)}\,
\int\limits_{\substack{y\in\partial\Omega\\ |x_2-y|\leq 3r}}\frac{\widetilde{\omega}(|x_2-y|)}{|x_2-y|^{n-1}}\,d\sigma(y).
\end{align}
The inequality in \eqref{eq:sio-654r4e-uyt.2} is a consequence of the fact that 
$|x_2-y|\leq|x_2-x_1|+|x_1-y|\leq 3\,r$ whenever $|x_1-y|\leq 2r$. We may then use \eqref{lkmem},
the fact that $\omega$ is doubling, and \eqref{wnonin-hGFF.cz} to conclude that 
\begin{align}\label{eq:sio-ootr}
{\rm II}_2+{\rm II}_3 
&\leq C\,[f]_{\dot{\mathscr{C}}^{\widetilde{\omega}}(\partial\Omega)}
\,\int_0^{6r}\widetilde{\omega}(s)\frac{ds}{s}
\leq C\,[f]_{\dot{\mathscr{C}}^{\widetilde{\omega}}(\partial\Omega)}
\,\int_0^{r}\omega(s)\frac{ds}{s}
\nonumber\\[6pt]
&\leq C[f]_{\dot{\mathscr{C}}^{\widetilde{\omega}}(\partial\Omega)} \,\omega_Z(r),
\end{align}
for $C\in(0,\infty)$ which depends only on $n$, the upper Ahlfors regularity constant of $\partial\Omega$, 
and the doubling constant of $\omega$.

From \cite[(2.5.151), p.\,435]{GHA.III} we know that there exists a constant $C\in(0,\infty)$, which depends only 
on $\vec{k}$, the dimension $n$, and the upper Ahlfors regularity constant of $\partial\Omega$, with the property that
\begin{equation}\label{Cau-C2btg590.FF.2.NEW!}
\sup_{x\in{\mathbb{R}}^n}\sup_{r\in(0,\infty)}\Big|\int_{\partial\Omega\setminus\overline{B(x,r)}}
\big\langle\nu(y),\vec{k}(x-y)\big\rangle\,d\sigma(y)\Big|\leq C.
\end{equation}
Let us turn our attention to ${\rm II}_1$ which we further decompose and estimate by
\begin{equation}\label{eq:sio-ndnf}
{\rm II}_1\leq{\rm II}_{11}+{\rm II}_{12},
\end{equation}
where
\begin{align}\label{eq:sio-bvgev.AAA}
{\rm II}_{11} &:=\bigg|\,\int_{\partial\Omega\setminus\overline{B(x_1,2r)}}
\langle\nu(y),\vec{k}(x_1-y)\rangle\big(f(x_2)-f(x_1)\big)\,d\sigma(y)\bigg|,
\\[4pt]
{\rm II}_{12} &:=\bigg|\,\int_{\partial\Omega\setminus\overline{B(x_1,2r)}}
\langle\nu(y),\vec{k}(x_1-y)-\vec{k}(x_2-y)\rangle\big(f(y)-f(x_2)\big)\,d\sigma(y)\bigg|.
\label{eq:sio-bvgev.BBB}
\end{align}
Making use of \eqref{Cau-C2btg590.FF.2.NEW!} and \eqref{wnonin-hGFF.cz.UU}, we have
\begin{align}\label{eq:sio-bvgev}
{\rm II}_{11} &\leq
[f]_{\dot{\mathscr{C}}^{\widetilde{\omega}}(\partial\Omega)}\,
\omega(r)\,\bigg|\int_{\partial\Omega\setminus\overline{B(x_1,2r)}}\langle\nu(y),\vec{k}(x_1-y)\rangle\,d\sigma(y)\bigg|
\nonumber\\[4pt]
&\leq C\,[f]_{\dot{\mathscr{C}}^{\widetilde{\omega}}(\partial\Omega)}
\,\omega(r)
\leq C\,[f]_{\dot{\mathscr{C}}^{\widetilde{\omega}}(\partial\Omega)}
\omega_Z(r),
\end{align}
for some $C\in(0,\infty)$ which depends only on $\vec{k}$, $n$ and the upper Ahlfors regularity constant of $\partial\Omega$.

To estimate ${\rm II}_{12}$ observe that, by the Mean Value Theorem and the homogeneity of $\vec{k}$,
\begin{equation}\label{POHgf.kk}
\big|\vec{k}(x_1-y)-\vec{k}(x_2-y)\big|\leq C\frac{r}{|x_2-y|^n}\,\,\text{ uniformly for }\,
y\in\partial\Omega\setminus\overline{B(x_1,2r)},
\end{equation}
with $C\in(0,\infty)$ depending only on $\vec{k}$. This, the fact that 
$\partial\Omega\setminus\overline{B(x_1,2r)}\subseteq \partial\Omega\setminus B(x_2,r)$, 
estimate \eqref{dedewad} with $d:=1$, the doubling property of $\omega$, \eqref{wnonin-hGFF.cz}, 
and \eqref{omega-cond:main}, imply
\begin{align}\label{eq:sio-ppqqp}
{\rm II}_{12} &\leq\int_{\partial\Omega\setminus\overline{B(x_1,2r)}}
\big|\vec{k}(x_1-y)-\vec{k}(x_2-y)\big|\,\big|f(y)-f(x_2)\big|\,d\sigma(y)
\nonumber\\[4pt]
&\leq C\,[f]_{\dot{\mathscr{C}}^{\widetilde{\omega}}(\partial\Omega)}\,r
\int_{\partial\Omega\setminus B(x_2,r)}\frac{\widetilde{\omega}(|x_2-y|)}{|x_2-y|^n}\,d\sigma(y)
\nonumber\\[4pt]
&\leq C\,[f]_{\dot{\mathscr{C}}^{\widetilde{\omega}}(\partial\Omega)}\,r
\int_{2r}^{4D}\frac{\widetilde{\omega}(s)}{s}\,\frac{ds}{s}
\leq C\,[f]_{\dot{\mathscr{C}}^{\widetilde{\omega}}(\partial\Omega)}\,r
\int_{r/2}^{D}\frac{\omega(s)}{s}\,\frac{ds}{s}
\nonumber\\[4pt]
&\leq C\,[f]_{\dot{\mathscr{C}}^{\widetilde{\omega}}(\partial\Omega)}\,\omega_Z(r/2)
\leq C\,[f]_{\dot{\mathscr{C}}^{\widetilde{\omega}}(\partial\Omega)}\,\omega_Z(r),
\end{align}
for some $C\in(0,\infty)$, which depends only on $\vec{k}$, $n$, the upper Ahlfors regularity constant 
of $\partial\Omega$, and the doubling constant of $\omega$.

In concert, \eqref{eq:sio-qtfew}, \eqref{eq:sio-qppe}, \eqref{eq:sio-oioeew}, \eqref{eq:sio-ootr}, \eqref{eq:sio-ndnf}, 
\eqref{eq:sio-bvgev}, and \eqref{eq:sio-ppqqp}, and recalling that $r=|x_1-x_2|$, allow us to conclude that there 
exists some $C\in(0,\infty)$, depending only on $\vec{k}$, $n$, $\omega$, $D$, and the upper regularity 
constant of $\partial\Omega$ with the property that 
\begin{equation}\label{eq:sio-awdfe}
\begin{array}{c}
\big|Tf(x_1)-Tf(x_2)\big|
\leq C\,[f]_{\dot{\mathscr{C}}^{\widetilde{\omega}}(\partial\Omega)}\,
\omega_Z(|x_1-x_2|)
\\[6pt]
\text{for all distinct points }\,\,x_1,x_2\in\partial\Omega.
\end{array}
\end{equation}
Since \eqref{eq:sio-abfhtj} and \eqref{eq:sio-deadpo} also imply 
\begin{equation}\label{eq:sio-7654ee.A}
\sup_{\partial\Omega}|Tf|\leq
C\,[f]_{\dot{\mathscr{C}}^{\widetilde{\omega}}(\partial\Omega)}
\cdot\Big(\int_{0}^{D}\omega(s)\,\frac{ds}{s}\Big)+\frac{|\vartheta|}{2}\sup_{\partial\Omega}|f|,
\end{equation}
we ultimately obtain 
\begin{equation}\label{eq:sio-7654ee.B}
\|Tf\|_{{\mathscr{C}}^{\omega_{\!Z}}(\partial\Omega)}\leq 
C\big(\sup_{\partial\Omega}|f|+[f]_{\dot{\mathscr{C}}^{\widetilde{\omega}}(\partial\Omega)}\big)
=C\|f\|_{{\mathscr{C}}^{\omega}(\partial\Omega)},
\end{equation}
where $C\in(0,\infty)$ depends on $\vec{k}$, $n$, $\omega$, $D$, and the upper regularity constant 
of $\partial\Omega$. Therefore, we conclude that the operator \eqref{Jgsag.TTT.222.D+M} is bounded.

Finally, since by \eqref{omega-cond:main.333.D+M} we have $\omega_Z(t)\leq C\omega(t)$ for every $t\in(0,\diam(\partial\Omega))$, 
that $T$ is bounded in the setting \eqref{Jgsag.TTT.333.D+M} whenever \eqref{omega-cond:main.333.D+M} holds 
follows at once from the boundedness of the operator \eqref{Jgsag.TTT.222.D+M}.
\end{proof}

Here is the theorem, advertised at the beginning of this section, which prefigures 
the boundary trace result claimed in item~\textit{(ii)} of Theorem~\ref{theor:uewmp.222}.

\begin{theorem}\label{thm:GHA-T1.JUMP}
Let $n\in{\mathbb{N}}$ be such that $n\geq 2$ and assume that $\Omega\subset\mathbb{R}^n$ is an Ahlfors regular domain 
with compact boundary. Abbreviate $\sigma:={\mathcal{H}}^{n-1}\lfloor\partial\Omega$ and denote by $\nu$ the geometric 
measure theoretic outward unit normal to $\Omega$. Consider a vector-valued function 
$\vec{k}\in\big[{\mathscr{C}}^2({\mathbb{R}}^n\setminus\{0\})\big]^n$ which is odd, divergence-free, and 
positive homogeneous of degree $1-n$. Associate with $\vec{k}$ the operators $\mathcal{T}$ and $T$ as in \eqref{GDL-TH.it.1} and
\eqref{16778.bbb.222.GDL}, respectively. Finally, define the complex number $\vartheta$ as in \eqref{kjshgh.FF.D+M}.

Then the following jump-formula {\rm (}where $I$ denotes the identity operator, 
and $\kappa>0$ is an arbitrary fixed aperture parameter{\rm )} is valid:
\begin{equation}\label{2.3.21BBBB.NEW!}
\begin{array}{c}
\text{for every given function $f\in{\mathscr{C}}^{\omega}(\partial\Omega)$ one has}
\\[6pt]
\big(\mathcal{T}f\big)\Big|^{{}^{\kappa-{\rm n.t.}}}_{\partial\Omega}
=(-\tfrac{\vartheta}{2}I+T)f\,\text{ at $\sigma$-a.e. point on }\,\partial\Omega,
\end{array}
\end{equation}
whenever $\omega:\big(0,\diam(\partial\Omega)\big)\to(0,\infty)$ is a doubling Dini growth function. 
\end{theorem}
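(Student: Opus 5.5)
The natural route is to reduce \eqref{2.3.21BBBB.NEW!} to the $L^p$ jump-formula already available in Theorem~\ref{Clpthm}. One caveat: that theorem is stated for UR boundaries, whereas here $\Omega$ is only Ahlfors regular. For generalized double layers with divergence-free kernels, however, the jump relation on Ahlfors regular domains is known from \cite[Chapter~2.5]{GHA.III}. I would therefore argue directly, using the subtraction trick that worked in the proof of Theorem~\ref{thm:GHA-T1}, and keep the $L^p$ route only as a fallback.

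\textbf{Step 1: subtract the value at the boundary point.} Fix $f\in{\mathscr{C}}^{\omega}(\partial\Omega)$ and a point $x_0\in\partial_\ast\Omega$ at which $x_0\in\overline{\Gamma_\kappa(x_0)}$ (cf. \eqref{weaklyaccesible}) and at which \eqref{eq:sio-abfhtj} holds. For $x\in\Gamma_\kappa(x_0)$ write
\[
\mathcal{T}f(x)=f(x_0)\,\mathcal{T}1(x)+\int_{\partial\Omega}\langle\nu(y),\vec{k}(x-y)\rangle\big(f(y)-f(x_0)\big)\,d\sigma(y).
\]

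\textbf{Step 2: the constant term.} By the divergence theorem \eqref{WRTP22-1-GF}, together with $\operatorname{div}\vec{k}=0$, oddness, and homogeneity, $\mathcal{T}1$ is constant in $\Omega$. The standard computation (cf. \cite[Proposition~2.5.16]{GHA.III}) gives $\mathcal{T}1=-\vartheta$ if $\Omega$ is bounded and $\mathcal{T}1=0$ if $\Omega$ is an exterior domain. Comparing with \eqref{eq:double-T1}, in both cases
\[
\mathcal{T}1=-\tfrac{\vartheta}{2}+T1 \quad\text{on }\partial\Omega.
\]

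\textbf{Step 3: the difference term (main obstacle).} I need to show that
\[
\int_{\partial\Omega}\langle\nu(y),\vec{k}(x-y)\rangle\big(f(y)-f(x_0)\big)\,d\sigma(y)\longrightarrow\int_{\partial\Omega}\langle\nu(y),\vec{k}(x_0-y)\rangle\big(f(y)-f(x_0)\big)\,d\sigma(y)
\]
as $\Gamma_\kappa(x_0)\ni x\to x_0$. I would use dominated convergence with $r:=|x-x_0|$, splitting $\partial\Omega$ into $B(x_0,2r)$ and its complement.

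On the near part, $|x-y|\geq\operatorname{dist}(x,\partial\Omega)\geq r/(1+\kappa)$, so the integrand is bounded by $C\,r^{1-n}\widetilde{\omega}(2r)$. Upper Ahlfors regularity then bounds the near integral by $C\,\omega(2r)\to0$. The same region for the limiting kernel is controlled by \eqref{lkmem}, giving $C\int_0^{4r}\omega(s)\,ds/s\to0$ by the Dini property.

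On the far part, $|x-y|\approx|x_0-y|$, so the integrand is dominated by $C\,\widetilde{\omega}(|x_0-y|)\,|x_0-y|^{1-n}$. This majorant is integrable by \eqref{eq:sio-deadpo}, and the integrand converges pointwise, so dominated convergence applies. The only delicate point is that this domination is uniform in $x$, and that holds precisely because of the nontangential restriction $x\in\Gamma_\kappa(x_0)$.

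\textbf{Step 4: combine.} Combining Steps 2 and 3 with \eqref{eq:sio-abfhtj} yields
\[
\big(\mathcal{T}f\big)\big|^{{}^{\kappa-{\rm n.t.}}}_{\partial\Omega}(x_0)=f(x_0)\big(-\tfrac{\vartheta}{2}+T1(x_0)\big)+Tf(x_0)-f(x_0)\,T1(x_0)=-\tfrac{\vartheta}{2}f(x_0)+Tf(x_0),
\]
which is \eqref{2.3.21BBBB.NEW!} at $\sigma$-a.e. $x_0\in\partial\Omega$.
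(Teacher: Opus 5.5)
Your argument is correct, but it is organized differently from the paper's.

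\textbf{The paper's route.} The paper fixes a good point $x$ and splits $\mathcal{T}f(z)$ at an auxiliary radius $\varepsilon\notin N_x$.
\begin{enumerate}
\item The piece over $|x-y|>\varepsilon$ tends to the truncated integral, and then to $Tf(x)$.
\item The near piece carrying $f(y)-f(x)$ is killed by the nontangential bound \eqref{TD16.NEW!} and the Dini condition.
\item The near piece carrying the constant $f(x)$ is handled by the divergence theorem on $B(x,\varepsilon)\setminus\Omega$, applied to the cut-off field $\psi\,\vec{k}(z-\cdot)$. This turns it into a flux through the cap $\partial B(x,\varepsilon)\setminus\Omega$, whose limit $-\vartheta/2$ comes from \cite[Lemma~2.5.10]{GHA.III}.
\end{enumerate}

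\textbf{Your route.} You subtract $f(x_0)$ globally and handle the constant part with two global identities valid for compact $\partial\Omega$:
\begin{enumerate}
\item $\mathcal{T}1\equiv-\vartheta$ or $0$ in $\Omega$. This is \eqref{JUfsf.D+M.1}, which comes from \cite[Lemma~2.5.14]{GHA.III}; Proposition~2.5.16 there concerns $T1$, not $\mathcal{T}1$.
\item $T1=\mp\vartheta/2$ $\sigma$-a.e., from \eqref{eq:double-T1}.
\end{enumerate}
Your check that $\mathcal{T}1=-\tfrac{\vartheta}{2}+T1$ in both the bounded and the exterior case is right.

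\textbf{What each buys.} Your approach gives a shorter proof, with no a.e.-radius selection, no cut-off vector field, and no interchange of iterated limits. The paper's route instead extracts the jump constant directly from the local geometry at $x$ and never uses the explicit value of $\mathcal{T}1$. It therefore adapts more readily to situations where $\mathcal{T}1$ and $T1$ are not known constants.

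\textbf{A small correction to Step~3.} The uniform domination on $\{|y-x_0|\geq 2r\}$ holds for every $x$ with $|x-x_0|=r$, because $|x-y|\geq|x_0-y|/2$ there. Nontangentiality is what you actually use on the near region. In fact, \eqref{TD16.NEW!} gives $|x_0-y|\leq(2+\kappa)|x-y|$ for all $y\in\partial\Omega$ and all $x\in\Gamma_\kappa(x_0)$. So a single dominated convergence over all of $\partial\Omega$, with majorant $C\,\widetilde{\omega}(|x_0-y|)\,|x_0-y|^{1-n}$, already suffices, and the splitting is unnecessary.
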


\begin{proof}
We being by noting that the action of the integral operator ${\mathcal{T}}$ is presently meaningful on the 
generalized H\"older space ${\mathscr{C}}^{\omega}(\partial\Omega)$ (since this is contained in $L^1(\partial\Omega,\sigma)$). 
Also, given that the boundary of $\Omega$ is presently assumed to be an Ahlfors regular set satisfying 
$\sigma(\partial\Omega\setminus\partial_\ast\Omega)=0$, we may invoke \cite[Proposition~8.8.6(iii), p.\,782]{GHA.I} which 
ensures that 
\begin{equation}\label{eq:jdhdnd.D+M}
\sigma(\partial\Omega\setminus\partial_{{}_{\rm nta}}\Omega)=0,
\end{equation}
where $\partial_{{}_{\rm nta}}\Omega$ denotes the nontangentially accessible boundary of $\Omega$ (see \eqref{eq:dNTA}). 
As such, it is meaningful to consider the nontangential boundary trace in the left-hand side of the equality in \eqref{2.3.21BBBB.NEW!} 
in the present setting. Fix now some H\"older function $f\in{\mathscr{C}}^{\omega}(\partial\Omega)$ and observe from \eqref{GDL-TH.it.1} that 
${\mathcal{T}}f$ is continuous in $\Omega$. Also, recall from \eqref{eq:sio-ffrc} that there exists a $\sigma$-measurable set 
$A\subseteq\partial\Omega$ satisfying $\sigma(A)=0$ and with the property that the limit 
\begin{align}\label{TD15-AAAAA.NEW!}
\lim_{\varepsilon\to 0^{+}}\int\limits_{\substack{|x-y|>\varepsilon\\ y\in\partial\Omega}}
\big\langle\nu(y),\vec{k}(x-y)\big\rangle f(y)\,d\sigma(y)\,\text{ exists for each }\,x\in\partial\Omega\setminus A.
\end{align}
Fix an arbitrary point
\begin{align}\label{TD14.abd.NEW!}
x\in\big(\partial\Omega\setminus A\big)\cap\partial_{{}_{\rm nta}}\Omega.
\end{align}
After possibly augmenting $A$ by a $\sigma$-nullset in $\partial\Omega$, we can assume (thanks to \cite[Lemma~2.5.10, p.\,431]{GHA.III} 
used with $\Omega$ replaced by ${\mathbb{R}}^n\setminus\Omega$) that there exists some ${\mathcal{L}}^1$-nullset $N_x\subset(0,\infty)$ 
with the property that for each $\varepsilon\in(0,\infty)\setminus N_x$ the set $\partial B(x,\varepsilon)\setminus\Omega$ 
is ${\mathcal{H}}^{n-1}$-measurable and 
\begin{equation}\label{Cau-C2.6a7.NEW!.XXX}
\lim_{\substack{\varepsilon\in(0,\infty)\setminus N_x\\ \varepsilon\to 0}}\,\,
\int_{\substack{|x-y|=\varepsilon\\ y\in{\mathbb{R}}^n\setminus\Omega}}
\Big\langle\frac{y-x}{\varepsilon}\,,\,\vec{k}(x-y)\Big\rangle\,d{\mathcal{H}}^{n-1}(y)=-\frac{\vartheta}{2}. 
\end{equation}
Furthermore, after additionally augmenting $N_x$ by an ${\mathcal{L}}^1$-nullset in $(0,\infty)$, there is no loss of generality 
in assuming that for each $\varepsilon\in(0,\infty)\setminus N_{x}$ one has
\begin{equation}\label{ia-GHGa}
\begin{array}{c}
\displaystyle
\int_{\substack{|x-y|\leq\varepsilon\\ y\in\partial\Omega}}\langle\nu,\vec{F}\rangle\,d\sigma
=\int_{\substack{|x-y|=\varepsilon\\ y\in{\mathbb{R}}^n\setminus\Omega}}
\Big\langle\frac{y-x}{\varepsilon},\vec{F}(y)\Big\rangle\,d{\mathcal{H}}^{n-1}(y),
\\[20pt]
\text{if the vector field $\vec{F}\in\big[{\mathscr{C}}^1({\mathbb{R}}^n)\big]^n$ is divergence-free in 
$B(x,\varepsilon)\setminus\Omega$}.
\end{array}
\end{equation}
Indeed, this is implied by \cite[Lemma~5.7.2, (5.7.26), pp.\,412--413]{GHA.I} presently used with $E:={\mathbb{R}}^n\setminus\Omega$
and observing that the outward unit normal to this $E$ is $-\nu$ (cf. \eqref{76g-h5Fa-ee5ds}).

For some arbitrary $\kappa>0$ fixed, write
\begin{align}\label{TD14.NEW!}
\lim\limits_{\substack{z\in\Gamma_\kappa(x)\\ z\to x}}{\mathcal{T}}f(z) 
&=\lim_{\substack{\varepsilon\in(0,\infty)\setminus N_x\\ \varepsilon\to 0}}
\,\,\lim\limits_{\substack{z\in\Gamma_\kappa(x)\\ z\to x}}
\int\limits_{\substack{|x-y|>\varepsilon\\ y\in\partial\Omega}}
\big\langle\nu(y),\vec{k}(z-y)\big\rangle f(y)\,d\sigma(y)
\nonumber\\[4pt]
&\quad+\lim_{\substack{\varepsilon\in(0,\infty)\setminus N_x\\ \varepsilon\to 0}}
\,\,\lim\limits_{\substack{z\in\Gamma_\kappa(x)\\ z\to x}}
\int\limits_{\substack{|x-y|\leq\varepsilon\\ y\in\partial\Omega}}
\big\langle\nu(y),\vec{k}(z-y)\big\rangle(f(y)-f(x))\,d\sigma(y)
\nonumber\\[4pt]
&\quad+\left(\lim_{\substack{\varepsilon\in(0,\infty)\setminus N_x\\ \varepsilon\to 0}}
\,\,\lim\limits_{\substack{z\in\Gamma_\kappa(x)\\ z\to x}}
\int\limits_{\substack{|x-y|\leq\varepsilon\\ y\in\partial\Omega}}
\big\langle\nu(y),\vec{k}(z-y)\big\rangle\,d\sigma(y)\right)f(x)
\nonumber\\[4pt]
&=:{\rm I}_1+{\rm I}_2+{\rm I}_3.
\end{align}
For each fixed threshold $\varepsilon>0$, Lebesgue's Dominated Convergence Theorem applies to the 
limit as $\Gamma_\kappa(x)\ni z\to x$ in ${\rm I}_1$ and yields
\begin{align}\label{TD15.NEW!}
{\rm I}_1 &=\lim_{\substack{\varepsilon\in(0,\infty)\setminus N_x\\ \varepsilon\to 0}}
\int\limits_{\substack{|x-y|>\varepsilon\\ y\in\partial\Omega}}
\big\langle\nu(y),\vec{k}(x-y)\big\rangle f(y)\,d\sigma(y)
\nonumber\\[6pt]
&=\lim_{\varepsilon\to 0^{+}}\int\limits_{\substack{|x-y|>\varepsilon\\ y\in\partial\Omega}}
\big\langle\nu(y),\vec{k}(x-y)\big\rangle f(y)\,d\sigma(y)=Tf(x),
\end{align}
where the second equality is a consequence of \eqref{TD15-AAAAA.NEW!}, and the last equality 
comes from \eqref{16778.bbb.222.GDL.D+M}.

To handle ${\rm I}_2$, we first observe that for every $y\in\partial\Omega$ 
and $z\in\Gamma_\kappa(x)$ we may estimate 
\begin{align}\label{TD16.NEW!}
|x-y| & \leq |z-y|+|z-x|\leq |z-y|+(1+\kappa)\,{\rm dist}(z,\partial\Omega)
\nonumber\\[4pt]
& \leq|z-y|+(1+\kappa)|z-y|=(2+\kappa)|z-y|.
\end{align}
Hence, since $f$ belongs to ${\mathscr{C}}^{\omega}(\partial\Omega)$,
\begin{align}\label{TD17.NEW!}
&\left|\big\langle\nu(y),\vec{k}(z-y)\big\rangle\right||f(y)-f(x)| 
\nonumber\\[4pt]
&\qquad\quad\leq(2+\kappa)^{n-1}\Big(\sup_{S^{n-1}}|\vec{k}|\Big)\|f\|_{\dot{\mathscr{C}}^{\omega}(\partial\Omega)}\frac{\omega(|x-y|)}{|x-y|^{n-1}}.
\end{align}
Based on this, \eqref{lkmem}, the upper Ahlfors regularity of $\partial\Omega$, 
and once again Lebesgue's Dominated Convergence Theorem, we obtain that
\begin{align}\label{TD17.tfd.NEW!}
\big|{\rm I}_2\big| &=\Bigg|\lim_{\substack{\varepsilon\in(0,\infty)\setminus N_x\\ \varepsilon\to 0}}
\int\limits_{\substack{|x-y|\leq\varepsilon\\ y\in\partial\Omega}}
\big\langle\nu(y),\vec{k}(x-y)\big\rangle(f(y)-f(x))\,d\sigma(y)\Bigg|
\nonumber\\[6pt]
&\leq(2+\kappa)^{n-1}\Big(\sup_{S^{n-1}}|\vec{k}|\Big)\cdot\|f\|_{\dot{\mathscr{C}}^{\omega}(\partial\Omega)}\,
\limsup_{\varepsilon\to 0^{+}}\int\limits_{\substack{|x-y|\leq\varepsilon\\ y\in\partial\Omega}}\frac{\omega(|x-y|)}{|x-y|^{n-1}}\,d\sigma(y)
\nonumber\\[6pt]
&\leq C\cdot\|f\|_{\dot{\mathscr{C}}^{\omega}(\partial\Omega)}\,\limsup_{\varepsilon\to 0^{+}}\int_{0}^{2\varepsilon}\omega(s)\frac{ds}{s}=0,
\end{align}
thanks to the fact that the growth function $\omega$ is Dini. Thus, 
\begin{equation}\label{TD18.NEW!}
{\rm I}_2=0.
\end{equation}

As for ${\rm I}_3$ in \eqref{TD14.NEW!}, for each fixed $z\in\Gamma_\kappa(x)$ pick 
$r\in\big(0\,,\,{\rm dist}(z,\partial\Omega)\big)$ and consider a scalar function 
$\psi\in{\mathscr{C}}^{\infty}({\mathbb{R}}^n)$ satisfying 
\begin{equation}\label{y6tgGG-y65tg.NEW!}
\text{$\psi\equiv 0$ in $B(z,r/2)$ and $\psi\equiv 1$ in ${\mathbb{R}}^n\setminus B(z,r)$}.
\end{equation}
The idea is now to use \eqref{ia-GHGa} for the vector field 
\begin{equation}\label{y6tgGG.NEW!}
\vec{F}\in\big[{\mathscr{C}}^{2}({\mathbb{R}}^n)\big]^n,\quad
\vec{F}(y):=\psi(y)\vec{k}(z-y)\,\,\text{ for all }\,\,y\in{\mathbb{R}}^n.
\end{equation}
Note that since $\vec{k}(z-\cdot)$ is divergence-free in ${\mathbb{R}}^n\setminus\{z\}$ and $\psi$ is identically $1$ in 
${\mathbb{R}}^n\setminus B(z,r)$, it follows that $\vec{F}$ is divergence-free in ${\mathbb{R}}^n\setminus\overline{B(z,r)}$. 
Bearing in mind that the latter region contains $\overline{B(x,\varepsilon)}\setminus\Omega$ for every $\varepsilon>0$, 
formula \eqref{ia-GHGa} applies to this vector field and gives that for each $\varepsilon\in(0,\infty)\setminus N_x$ we have
\begin{equation}\label{TD19.NEW!}
\int\limits_{\substack{|x-y|\leq\varepsilon\\ y\in\partial\Omega}}
\big\langle\nu(y),\vec{k}(z-y)\big\rangle\,d\sigma(y)
=\int\limits_{\substack{|x-y|=\varepsilon\\ y\in{\mathbb{R}}^n\setminus\Omega}}
\big\langle(y-x)/\varepsilon\,,\,\vec{k}(z-y)\big\rangle\,d{\mathcal{H}}^{n-1}(y).
\end{equation}
From \eqref{TD19.NEW!} and \eqref{Cau-C2.6a7.NEW!.XXX} we therefore conclude that
\begin{align}\label{TD20.NEW!}
&\lim_{\substack{\varepsilon\in(0,\infty)\setminus N_x\\ \varepsilon\to 0}}
\,\,\lim\limits_{\substack{z\in\Gamma_\kappa(x)\\ z\to x}}
\int\limits_{\substack{|x-y|\leq\varepsilon\\ y\in\partial\Omega}}
\big\langle\nu(y),\vec{k}(z-y)\big\rangle\,d\sigma(y)
\\[4pt]
&\qquad\qquad\qquad\qquad
=\lim_{\substack{\varepsilon\in(0,\infty)\setminus N_x\\ \varepsilon\to 0}}
\int\limits_{\substack{|x-y|=\varepsilon\\ y\in{\mathbb{R}}^n\setminus\Omega}}
\Big\langle\frac{y-x}{/\varepsilon}\,,\,\vec{k}(x-y)\Big\rangle\,d{\mathcal{H}}^{n-1}(y)=-\frac{\vartheta}{2}.
\nonumber
\end{align}
A combination of \eqref{TD14.NEW!}, \eqref{TD15.NEW!}, \eqref{TD18.NEW!}, and \eqref{TD20.NEW!} shows that the 
limit in the left-hand side of \eqref{TD14.NEW!} exists and matches $(-\tfrac{\vartheta}{2}I+T)f(x)$. 
In view of \eqref{TD14.abd.NEW!} and \eqref{eq:jdhdnd.D+M}, this proves that the formula in the second line 
of \eqref{2.3.21BBBB.NEW!} holds for each function $f\in{\mathscr{C}}^{\omega}(\partial\Omega)$ 
at $\sigma$-a.e. point $x\in\partial\Omega$.
\end{proof}

We have now assembled all the tools required to carry out the proof of Theorem~\ref{theor:uewmp.222}.

\vskip 0.08in
\begin{proof}[Proof of Theorem~\ref{theor:uewmp.222}]
All claims in item {\it (i)} are directly implied by Theorem~\ref{thm:GHA-T1}. 
In dealing with item {\it (ii)}, work under the additional assumption that $\Omega$ is also a uniform domain. 
For starters, we recall from \cite[Lemma~2.5.14, pp.\,436--437]{GHA.III} that $\mathcal{T}$ maps constant functions to constant functions. 
Specifically, with the piece of notation introduced in \eqref{kjshgh.FF.D+M}, for every point in $\Omega$ we have
\begin{equation}\label{JUfsf.D+M.1}
\mathcal{T}1=
\left\lbrace
\begin{array}{ll}
\displaystyle
-\vartheta, & \text{if }\,\,\Omega\,\,\text{ is bounded}, 
\\[8pt]
\displaystyle 
0, & \text{if }\,\,\Omega \text{ is an exterior domain}.
\end{array}
\right.
\end{equation}
To proceed, for each $x\in\Omega$ set $\rho(x):=\mathrm{dist}(x,\partial\Omega)$. We claim that 
there exists a constant $C\in(0,\infty)$, depending on $\Omega$, $n$, $\omega$ and $\vec{k}$, with the property that
\begin{equation}\label{ustgik.D+M}
\|{\mathcal{T}}f\|_{\mathscr{C}^{\omega_{\!Z}}(\Omega)}\leq C\bigg(\sup_{x\in\Omega}|({\mathcal{T}}f)(x)|
+\sup_{x\in\Omega}\frac{|\nabla(\mathcal{T} f)(x)|}{W_{\widetilde{\omega}}(\rho(x))}\bigg)
\leq C\|f\|_{{\mathscr{C}}^{\omega}(\partial\Omega)},
\end{equation}
for each $f\in{\mathscr{C}}^{\omega}(\partial\Omega)$. Indeed, the first inequality in \eqref{ustgik.D+M} is directly 
implied by Lemma~\ref{lemma:fokogtr}, since we are presently assuming that $\Omega$ is a uniform domain. The second 
inequality in \eqref{ustgik.D+M} is a consequence of Lemma~\ref{lemma:pokor}, whose applicability is ensured by 
\eqref{GDL-TH.it.000}, the assumption that the growth function $\omega$ is Dini (cf. \eqref{wnonin-hGFF.DDD}), and 
\eqref{JUfsf.D+M.1}. Having established \eqref{ustgik.D+M}, we conclude that the operator in \eqref{Jgsag.TTT} 
is, as claimed, well defined, linear, and bounded.

Moreover, in view of \eqref{wnonin-hGFF.cz.UU} and Lemma~\ref{holderclosure}, for each $f\in{\mathscr{C}}^{\omega}(\partial\Omega)$ 
we may canonically extend ${\mathcal{T}}f$ to a function in $\mathscr{C}^{\omega_{\!Z}}(\overline{\Omega})$, for which we retain 
the same symbol. With this in mind, if we now fix some background aperture parameter $\kappa\in(0,\infty)$, it follows 
from \eqref{2.3.21BBBB.NEW!} that 
\begin{equation}\label{2.3.21BBBB.NEW!.dddmmm}
\big(\mathcal{T}f\big)\Big|_{\partial\Omega}
=\big(\mathcal{T}f\big)\Big|^{{}^{\kappa-{\rm n.t.}}}_{\partial\Omega}
=(-\tfrac{\vartheta}{2}I+T)f\,\text{ at $\sigma$-a.e. point on }\,\partial\Omega.
\end{equation}
Since ${\mathcal{T}}f$ belongs to $\mathscr{C}^{\omega_{\!Z}}(\Omega)\equiv\mathscr{C}^{\omega_{\!Z}}(\overline{\Omega})$, 
it follows that $\big(\mathcal{T}f\big)\Big|_{\partial\Omega}$ belongs to $\mathscr{C}^{\omega_{\!Z}}(\partial\Omega)$, 
hence this is a continuous function on $\partial\Omega$ (given that $\omega_Z$ is a growth function; cf. \eqref{wnonin-hGFF.cz.UU}). 
From Theorem~\ref{thm:GHA-T1} we also know that $(-\tfrac{\vartheta}{2}I+T)f$ is a continuous function on $\partial\Omega$ 
(again, bearing in mind that $\omega_Z$ is a growth function). 
Since, as seen from \eqref{2.3.21BBBB.NEW!.dddmmm}, these two continuous functions coincide $\sigma$-a.e. on the Ahlfors regular 
set $\partial\Omega$, we ultimately conclude that they coincide everywhere on $\partial\Omega$. This establishes 
\eqref{eq:-knbmdnjm.D+M}.
Finally, \eqref{Jgsag.TTT.D+M} is a direct consequence of \eqref{Jgsag.TTT} and \eqref{omega-cond:main.333}.
\end{proof}

We close this section by noting that \eqref{ustgik.D+M} may be regarded as a significant generalization 
of a classical result due to Hardy-Littlewood.

\section{Clifford algebras and Cauchy-Clifford operators}\label{SEc:5rff}

The Clifford algebra with $n$ imaginary units is the minimal enlargement of ${\mathbb{R}}^n$ to a 
unitary real algebra $({\mathcal{C}}\!\ell_n,+,\odot)$, which is not generated as an algebra by any 
proper subspace of $\mathbb{R}^n$ and such that 
\begin{equation}\label{g6c4dDaf.1}
x\odot x=-|x|^2\,\,\text{ for every }\,\,x\in\mathbb{R}^n\hookrightarrow{\mathcal{C}}\!\ell_n. 
\end{equation}
In particular, if $\{e_j\}_{j=1}^n$ is the standard orthonormal basis in $\mathbb{R}^n$ then
\begin{equation}\label{g6c4dDaf.2}
e_j\odot e_j=-1\,\,\text{ and }\,\,e_j\odot e_k=-e_k\odot e_j\,\,\text{ whenever }\,\,1\leq j\neq k\leq n.
\end{equation}
This allows us to define a canonical embedding $\mathbb{R}^n\hookrightarrow{\mathcal{C}}\!\ell_n$ via
\begin{equation}\label{identificationrncln}
{\mathbb{R}}^n\ni x=(x_1,\dots,x_n)\equiv\sum_{j=1}^n x_j e_j\in{\mathcal{C}}\!\ell_n.
\end{equation}
As a concrete model for the Clifford algebra ${\mathcal{C}}\!\ell_n$, one can take a suitable sub-algebra 
of the matrix algebra ${\mathbb{R}}^{2^n\times 2^n}$. Thus, one should think (and at times there are distinct advantages of doing so) 
of the standard orthonormal basis in $\mathbb{R}^n$ as being identified with a suitably chosen family $\{e_j\}_{j=1}^n$ of matrices 
in ${\mathbb{R}}^{2^n\times 2^n}$. See \cite[pp.\,522-524]{GHA.I} for further details; here we only wish to remark that in this scenario, 
the Clifford algebra product $\odot$ simply becomes matrix operation. 

Going further, any element $u\in{\mathcal{C}}\!\ell_n$ has a unique representation of the form
\begin{equation}\label{eq:u-written}
u=\sum_{\ell=0}^n\sideset{}{'}\sum_{|I|=\ell}u_I e_I,\qquad u_I\in{\mathbb{R}},
\end{equation}
where $\sum'$ indicates that the sum is performed only over strictly increasing multi-indices 
$I$, i.e., $I=(i_1,i_2,\dots,i_\ell)$ with $1\leq i_1<i_2<\cdots<i_\ell\leq n$, and $e_I$ denotes 
the product $e_I:=e_{i_1}\odot e_{i_2}\odot\cdots\odot e_{i_\ell}$. We agree to set 
$e_0:=e_{\varnothing}:=1$. Throughout, we shall endow ${\mathcal{C}}\!\ell_n$ with the natural 
Euclidean metric. Hence, for each $u$ represented as above we have
\begin{equation}\label{hTfc}
|u|:=\left(\sum_I|u_I|^2\right)^{1/2}.
\end{equation} 
If $I=(i_1,\dots,i_\ell)$ with $1\leq i_1<i_2<\cdots<i_\ell\leq n$, define the conjugate 
$\overline{e_I}$ of $e_I$ as $\overline{e_I}=(-1)^{l}e_{i_l}\odot\cdots\odot e_2\odot e_1$. 
More generally, for an arbitrary element $u\in{\mathcal{C}}\!\ell_n$ represented as above we then define 
\begin{equation}\label{6ereddd}
\overline{u}:=\sum_{\ell=0}^n\sideset{}{'}\sum_{|I|=\ell}u_I\overline{e_I}.
\end{equation}
Note that $\overline{x}=-x$ for every $x\in{\mathbb{R}}^n\hookrightarrow{\mathcal{C}}\!\ell_n$, 
and $|u|=|\overline{u}|$ for every $u\in{\mathcal{C}}\!\ell_n$. One can also check that 
\begin{equation}\label{eq:clifford2n2}
|u\odot v|\leq 2^{n/2}|u||v|\,\,\text{ for all }\,\,u,v\in{\mathcal{C}}\!\ell_n,
\end{equation} 
and, in fact, $|u\odot v|=|u||v|$ whenever $u\in{\mathbb{R}}^n\hookrightarrow{\mathcal{C}}\!\ell_n$ 
or $v\in{\mathbb{R}}^n\hookrightarrow{\mathcal{C}}\!\ell_n$. For further details on Clifford 
algebras, the reader is referred to \cite[Chapter~1, pp.\,1--15]{Mit94}.

To study the boundedness of Cauchy-Clifford operators acting on Clifford algebra-valued 
functions, we need appropriate norms on the spaces to which the said functions belong. 
If $(X,\norm{\cdot}_X)$ is a Banach space then $X\otimes\mathcal{C}\!\ell_n$ will denote 
the Banach space consisting of elements of the form
\begin{equation}\label{t5fff-j632}
u=\sum_{\ell=0}^n\sideset{}{'}\sum_{|I|=\ell}u_I e_I,\qquad u_I\in X,
\end{equation}
equipped with the norm
\begin{equation}\label{t5fff-j632.B}
X\otimes\mathcal{C}\!\ell_n\ni u=\sum_{\ell=0}^n\sideset{}{'}\sum_{|I|=\ell}u_I e_I
\mapsto\norm{u}_{X\otimes\mathcal{C}\!\ell_n}:=\sum_{\ell=0}^n\sideset{}{'}\sum_{|I|=\ell}\norm{u_I}_{X}.
\end{equation}

Finally, we recall the definition of the Dirac operator $D:=\sum_{j=1}^n e_j\partial_j$. 
Given an open set $\Omega\subseteq{\mathbb{R}}^n$, we shall denote by $D_L$ and $D_R$ 
the action of $D$ on functions $u\in{\mathscr{C}}^1(\Omega)\otimes{\mathcal{C}}\!\ell_n$ 
from the left and from the right, respectively. More precisely, if $u$ is written as in \eqref{eq:u-written} 
then
\begin{equation}\label{111-6ffccc}
D_L u:=\sum_{\ell=0}^n\sideset{}{'}\sum_{|I|=\ell}\sum_{j=1}^n\frac{\partial u_I}{\partial x_j}\,e_j\odot e_I,
\quad
D_R u:=\sum_{\ell=0}^n\sideset{}{'}\sum_{|I|=\ell}\sum_{j=1}^n\frac{\partial u_I}{\partial x_j}\,e_I\odot e_j.
\end{equation}
If $\Omega\subseteq{\mathbb{R}}^n$ is an Ahlfors regular domain, $\sigma:={\mathcal{H}}^{n-1}\lfloor\partial\Omega$, 
and $\nu=(\nu_1,\dots,\nu_n)$ is the geometric measure theoretic outward unit normal to $\Omega$, we agree to identify 
the latter vector field with the Clifford algebra valued function defined at $\sigma$-a.e. point $x\in\partial\Omega$ as
$\nu(x)=\nu_1(x)e_1+\cdots+\nu_n(x)e_n\in{\mathcal{C}}\!\ell_n$. In this context, for any 
$u,v\in{\mathscr{C}}^1_c({\mathbb{R}}^n)\otimes{\mathcal{C}}\!\ell_n$ formula \eqref{WRTP22-1-GF} implies that
the following integration by parts formula holds:
\begin{equation}\label{DiracProperty2}
\int_{\partial\Omega}u(x)\odot\nu(x)\odot v(x)\,d\sigma(x) 
=\int_{\Omega}\Big((D_R u)(x)\odot v(x)+u(x)\odot(D_L v)(x)\Big)\,dx.
\end{equation}
The following lemma is proved in \cite[Lemma~4.2, p.\,982]{MitMitVer16}. To state it, we set
\begin{equation}\label{111-6ffc.4445}
[x]_s:=x_s\,\,\text{ for each }\,\,x=(x_1,\dots,x_n)\in{\mathbb{R}}^n\,\,\text{  and }\,\,s\in\{1,\dots,n\}.
\end{equation}

\begin{lemma}\label{lemmapol}
Fix $n\geq 2$ and consider an odd, harmonic, homogeneous polynomial $P(x)$, with $x\in{\mathbb{R}}^n$, 
of degree $\ell\geq 3$. Associate with it the family $P_{rs}(x)$ with $1\leq r,s\leq n$ of harmonic 
homogeneous polynomials of degree $\ell-2$ given by the formula 
\begin{equation}\label{PrsDef}
P_{rs}(x):=\frac{1}{\ell(\ell-1)}(\partial_r\partial_s P)(x),\quad x\in{\mathbb{R}}^n.
\end{equation} 

Then there exists a family of odd, $\mathscr{C}^{\infty}$ functions 
\begin{equation}\label{tfCCa}
k_{rs}:{\mathbb{R}}^n\setminus\{0\}\to{\mathbb{R}}^n\hookrightarrow{\mathcal{C}}\!\ell_n,\qquad 1\leq r,s\leq n,
\end{equation} 
which are homogeneous of degree $1-n$, such that for each $r,s\in\{1,\dots,n\}$ and each 
$x\in{\mathbb{R}}\setminus\{0\}$ one has 
\begin{equation}\label{lemmapoleq2}
\frac{P(x)}{|x|^{n-1+\ell}}=\sum_{r,s=1}^{n}\big[k_{rs}(x)\big]_s\,\,\,{\rm (}\text{cf. \eqref{111-6ffc.4445}}{\rm )},
\end{equation} 
\begin{equation}\label{lemmapoleq3}
(D_R k_{rs})(x)=\frac{\ell-1}{n+\ell-3}\frac{\partial}{\partial x_r}\left(\frac{P_{rs}(x)}{|x|^{n+\ell-3}}\right).
\end{equation} 
Moreover, there exists a finite dimensional constant $c_n>0$ such that 
\begin{equation}\label{lemmapoleq1}
\max_{1\leq r,s\leq n}\sup_{S^{n-1}}|k_{rs}|+\max_{1\leq r,s\leq n}\sup_{S^{n-1}}|\nabla k_{rs}|
\leq c_n 2^\ell\norm{P}_{L^1(S^{n-1},{\mathcal{H}}^{n-1})}.
\end{equation}
\end{lemma}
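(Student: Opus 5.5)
The plan is to build each $k_{rs}$ as an $x_r$-derivative of a Clifford-vector-valued function homogeneous of degree $2-n$. The formula \eqref{lemmapoleq3} then reduces to computing a $D_R$ of degree $1-n$, after which I still have to check \eqref{lemmapoleq2}.

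The starting point is Euler's identity for the homogeneous polynomial $\partial_rP$ of degree $\ell-1$, applied twice. It gives
\begin{equation*}
\ell(\ell-1)P(x)=\sum_{r,s=1}^n x_rx_s(\partial_r\partial_sP)(x),
\quad\text{so}\quad
\frac{P(x)}{|x|^{n-1+\ell}}=\sum_{r,s=1}^n\frac{x_r x_s P_{rs}(x)}{|x|^{n-1+\ell}}.
\end{equation*}
Each $P_{rs}$ is odd, harmonic, and homogeneous of degree $\ell-2\geq 1$, since derivatives of harmonic polynomials are harmonic.

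Next I fix $r,s$ and look for $h_{rs}:\mathbb{R}^n\setminus\{0\}\to\mathbb{R}^n\hookrightarrow\mathcal{C}\!\ell_n$, homogeneous of degree $2-n$ and even, of the form
\begin{equation*}
h_{rs}(x)=\beta\,\frac{P_{rs}(x)\,x}{|x|^{n+\ell-3}}+\gamma\,\frac{(\nabla P_{rs})(x)}{|x|^{n+\ell-5}},
\end{equation*}
with real constants $\beta,\gamma$ depending only on $n,\ell$. I want $D_Rh_{rs}$ to be the scalar $\frac{\ell-1}{n+\ell-3}P_{rs}/|x|^{n+\ell-3}$. To compute it I use $D_R(fx)=x\odot\nabla f-nf$ for scalar $f$. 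I also use $D_R(F\,\nabla P_{rs})=F\,D_R(\nabla P_{rs})+\nabla P_{rs}\odot\nabla F$, where $D_R(\nabla P_{rs})=-\Delta P_{rs}+(\text{curl part})=0$ by harmonicity. For radial weights $\nabla F$ is a multiple of $x$. Splitting $x\odot\nabla P_{rs}=-x\cdot\nabla P_{rs}+x\wedge\nabla P_{rs}$, Euler's identity turns the scalar parts into multiples of $P_{rs}/|x|^{n+\ell-3}$. The bivector parts are multiples of $x\wedge\nabla P_{rs}/|x|^{n+\ell-3}$, coming from both terms with opposite orientations, and I choose $\gamma/\beta$ to cancel them. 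Then I normalize $\beta$ to get the constant $\frac{\ell-1}{n+\ell-3}$. Note $n+\ell-3>0$ since $\ell\geq3$.

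With this in hand I set
\begin{equation*}
k_{rs}:=\partial_rh_{rs}+(\text{possibly a Cauchy-kernel correction}).
\end{equation*}
This is odd, $\mathscr{C}^\infty$, vector-valued, homogeneous of degree $1-n$, and satisfies $D_Rk_{rs}=\partial_r D_Rh_{rs}$, which is exactly \eqref{lemmapoleq3}. It remains to verify \eqref{lemmapoleq2}, i.e.\ that $\sum_{r,s}[\partial_rh_{rs}]_s$ reproduces $\sum_{r,s}x_rx_sP_{rs}/|x|^{n-1+\ell}$. I expect this matching to be the main obstacle. I will expand $[\partial_rh_{rs}]_s$ and sum over $r,s$ using the symmetry $P_{rs}=P_{sr}$. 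I also use $\sum_r\partial_rP_{rs}=\frac{1}{\ell(\ell-1)}\partial_s\Delta P=0$ and $\sum_s x_sP_{rs}=\frac{\ell-2}{\ell(\ell-1)}\partial_rP$, together with $\sum_{r,s}x_rx_sP_{rs}=P$, so that every term collapses to a multiple of $P/|x|^{n-1+\ell}$.

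If the constants from the first step do not give coefficient exactly $1$, I will add to each $k_{rs}$ a term annihilated by $D_R$. The natural choice is a multiple of $x_rx_sP_{rs}(x)\,x/|x|^{n+\ell-1}$ minus its $D_R$-correction, or equivalently a multiple of the Cauchy-type kernel $\frac{P(x)}{|x|^{n+\ell-1}}e_s$ adjusted by symmetrization, chosen to fix the constant. Should the matching instead force $\gamma$ to differ from the value in the first step, I would redo that step with the three-parameter ansatz $\beta,\gamma,\delta$, adding $\delta\,x_sP_{rs}e_r$-type terms, and solve the resulting small linear system in $\beta,\gamma,\delta$.

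Finally, for \eqref{lemmapoleq1} I will bound $k_{rs}$ and $\nabla k_{rs}$ on $S^{n-1}$ by sup norms of $\partial^\alpha P$ with $|\alpha|\leq 4$, divided by $\ell(\ell-1)$. Then I control $\sup_{S^{n-1}}|\partial^\alpha P|$ by $c_n2^{\ell}\|P\|_{L^1(S^{n-1})}$. For this I use the mean value property of harmonic polynomials on balls of radius $1/2$ around points of the unit sphere, combined with homogeneity to pass between the sphere and the ball of radius $2$; this produces the factor $2^\ell$, while Cauchy derivative estimates absorb the powers of $\ell$ from differentiation.
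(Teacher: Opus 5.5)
\paragraph{A gap in the case $n=2$, $\ell=3$.} Your scheme works whenever $n+\ell\neq 5$, but it fails in the admissible case $n=2$, $\ell=3$.

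Put $m:=n+\ell-3$ and $x\wedge y:=x\odot y+\langle x,y\rangle$. Your two terms give
\[
D_Rh_{rs}=\big(\gamma(m-2)(\ell-2)-\beta\big)\frac{P_{rs}(x)}{|x|^{m}}+\big(\beta+\gamma(m-2)\big)\frac{x\wedge\nabla P_{rs}(x)}{|x|^{m}}.
\]
So you are forced to take $\beta=-\gamma(m-2)$, and then $\gamma(m-2)(\ell-1)=\frac{\ell-1}{m}$. You checked $m>0$ but not $m\neq 2$. When $n+\ell=5$, the second term of your ansatz is the constant vector $\gamma\nabla P_{rs}$, which $D_R$ annihilates. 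Hence $\beta=0$ and $D_Rh_{rs}=0$.

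No other ansatz rescues this, so your three-parameter fallback is moot. For $n=2$, take any $h_{rs}$ homogeneous of degree $0$ with $D_Rh_{rs}$ scalar. It is curl-free, and its circulation around $|x|=R$ is both independent of $R$ and proportional to $R$, hence zero. So $h_{rs}=\nabla\big(r\,g(\theta)\big)$, and $D_Rh_{rs}=P_{rs}/|x|^{2}$ becomes $g+g''=-P_{rs}(\cos\theta,\sin\theta)$. This is a resonant equation with no $2\pi$-periodic solution, because $P_{rs}$ is linear and nonzero for some $(r,s)$.

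Thus in this case the required $h_{rs}$ does not exist. You must either allow a $\log|x|$ term in $h_{rs}$ or define $k_{rs}$ directly. For instance, for $\ell=3$ and any $n$, the choice $k_{rs}:=-\frac{1}{n}\nabla\big(x_rP_{rs}(x)|x|^{-n}\big)$ works. Indeed, $\Delta\big(x_rP_{rs}|x|^{-n}\big)=2\,\partial_r\big(P_{rs}|x|^{-n}\big)$ for linear $P_{rs}$, and $\sum_{r,s}\partial_s\big(x_rP_{rs}|x|^{-n}\big)=-n\,P\,|x|^{-n-2}$.

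\paragraph{The remaining steps.} The step you call the main obstacle is not one, and it needs no correction. With $\beta=-\frac1m$ and $\gamma=\frac{1}{m(m-2)}$, your field is $h_{rs}=\frac{1}{m(m-2)}\nabla\big(P_{rs}|x|^{2-m}\big)$. So $k_{rs}=\frac{1}{m(m-2)}\nabla\partial_r\big(P_{rs}|x|^{2-m}\big)$, which is a gradient and visibly odd.

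You also need the correct Euler identity $\sum_s x_sP_{rs}=\frac{1}{\ell}\partial_rP$. Your coefficient $\frac{\ell-2}{\ell(\ell-1)}$ is wrong, since $\partial_rP$ has degree $\ell-1$. Together with $\sum_s\partial_sP_{rs}=0$, this gives $\sum_s[h_{rs}]_s=\frac{\beta}{\ell}\,\partial_rP\,|x|^{-m}$, hence
\[
\sum_{r,s}[\partial_rh_{rs}]_s=\frac{\beta}{\ell}\Big(\frac{\Delta P}{|x|^{m}}-m\ell\,\frac{P}{|x|^{m+2}}\Big)=\frac{P(x)}{|x|^{n-1+\ell}}
\]
exactly.

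For the bound \eqref{lemmapoleq1}, note that $\nabla k_{rs}$ involves fifth-order derivatives of $P$, not fourth. Differentiating $|x|^{2-m}$ produces factors up to $m^3$; these are absorbed by the prefactor $\frac{1}{m(m-2)}$ and by the $\frac{1}{\ell(\ell-1)}$ in $P_{rs}$. Interior estimates on balls of fixed radius do not absorb them; they merely have constants independent of $\ell$.
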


\begin{remark}\label{tfCF-6755}
Let $P$ be an odd, harmonic homogeneous polynomial of degree $\ell\geq 1$. Then for each multi-index 
$\gamma\in{\mathbb{N}}_0^n$ there exists a finite constant $c_{n,\gamma}>0$ such that 
\begin{align}\label{boundpol}
\norm{\partial^{\gamma}P}_{L^{\infty}(S^{n-1},{\mathcal{H}}^{n-1})} &\leq c_{n,\gamma}\int_{B(0,2)}|P(x)|\,dx 
\nonumber\\[4pt] 
&=c_{n,\gamma}\int_{S^{n-1}}|P(\upsilon)|\Big(\int_{0}^2 r^{n-1+\ell}\,dr\Big)\,d{\mathcal{H}}^{n-1}(\upsilon) 
\nonumber\\[4pt] 
&=c_{n,\gamma}\frac{2^\ell}{n+\ell}\norm{P}_{L^1(S^{n-1},{\mathcal{H}}^{n-1})},
\end{align} 
where we have used interior estimates for the harmonic function $P$ and its homogeneity of degree $\ell$.
\end{remark}

We continue to assume that $\Omega\subseteq{\mathbb{R}}^n$ is an Ahlfors regular domain, and abbreviate 
$\sigma:={\mathcal{H}}^{n-1}\lfloor\partial\Omega$. As before, we identify the geometric measure theoretic 
outward unit normal $\nu=(\nu_1,\dots,\nu_n)$ to $\Omega$ with the Clifford algebra valued function 
defined at $\sigma$-a.e. point $x\in\partial\Omega$ as $\nu(x)=\nu_1(x)e_1+\cdots+\nu_n(x)e_n\in{\mathcal{C}}\!\ell_n$. 
With $\varpi_{n-1}$ denoting the surface area of the unit sphere in $\mathbb{R}^n$, define
the vector field $\vec{k}=(k_j)_{1\leq j\leq n}$ as
\begin{equation}\label{eq:kforcc}
k_j(x):=\frac{1}{\varpi_{n-1}}\frac{x}{|x|^n}\odot e_j,\,\,\text{ for every }\,\,j\in\{1,\dots,n\}\,\,\text{ and }\,\,x\in\mathbb{R}^n\setminus\{0\}.
\end{equation}
Clearly, we have 
\begin{equation}\label{GDL-TH.it.000-cc}
\vec{k}\in\big[{\mathscr{C}}^N({\mathbb{R}}^n\setminus\{0\})\big]^n\,\,\text{ is odd, positive homogeneous of degree $1-n$}. 
\end{equation}
Moreover, one can prove (cf. \cite[Example~5.1.3, p.\,552-553]{GHA.IV}) that
\begin{equation}\label{eq:cc-div.vartheta}
\div\vec{k}=0\,\,\text{ in }\,\,\mathbb{R}^n\setminus\{0\}\,\,\text{ and }\,\,
\vartheta=\int_{S^{n-1}}\langle x,\vec{k}(x)\rangle\,d\mathcal{H}^{n-1}(x)=-1.
\end{equation}

Let us also remark that, if we regard the Clifford algebra ${\mathcal{C}}\!\ell_n$ as sub-algebra of the matrix algebra ${\mathbb{R}}^{2^n\times 2^n}$
(a scenario in which the Clifford algebra product $\odot$ simply becomes matrix operation), then the components $(k_j)_{1\leq j\leq n}$
of the vector field $\vec{k}$ become matrix-valued functions. This is relevant in the context of Remark~\ref{KJag.GFF.D+M}.
Specifically, the generalized double layer operator $\mathcal{T}$ defined as in \eqref{GDL-TH.it.1} 
for this choice of $\vec{k}$ becomes the boundary-to-domain Cauchy-Clifford operator, that is, the operator acting on any function 
$f\in L^1\big(\partial\Omega,\frac{\sigma(x)}{1+|x|^{n-1}}\big)\otimes{\mathcal{C}}\!\ell_n$ as
\begin{equation}\label{CauchyCliffordDomain}
(\mathcal{C}f)(x):=\frac1{\varpi_{n-1}}\int_{\partial\Omega}\frac {x-y}{|x-y|^n}\odot\nu(y)\odot f(y)\,d\sigma(y)\,\,\text{ for each }\,\,x\in\Omega.
\end{equation}
Similarly, the operator $T$ defined in \eqref{16778.bbb.222.GDL} becomes the Cauchy-Clifford
principal value operator, given by
\begin{equation}\label{CauchyCliffordBoundary}
{\mathbf{C}}f(x):=\lim_{\varepsilon\to 0^{+}}\frac{1}{\varpi_{n-1}}\,
\int\limits_{\substack{y\in\partial\Omega\\ |x-y|>\varepsilon}}\frac{x-y}{|x-y|^n}\odot\nu(y)\odot f(y)\,d\sigma(y).
\end{equation}

There is also a remarkable Cauchy Reproducing Formula involving the above boundary-to-domain Cauchy-Clifford operator. 
Concretely, with $\Omega$ as above, if the function $u\in{\mathscr{C}}^\infty(\Omega)\otimes{\mathcal{C}}\!\ell_n$ 
satisfies (for some fixed aperture parameter $\kappa>0$) 
\begin{equation}\label{DiracProperty1.a}
\begin{array}{c}
Du=0\,\,\text{ in }\,\,\Omega,\quad
{\mathcal{N}}_{\kappa}u\in L^1\big(\partial\Omega,\frac{\sigma(x)}{1+|x|^{n-1}}\big),
\,\,\,\text{ and}
\\[6pt]
u\big|^{{}^{\kappa-{\rm n.t.}}}_{\partial\Omega}\,\,\text{ exists at $\sigma$-a.e. point on }\,\,\partial\Omega,
\end{array}
\end{equation}
then (cf. \cite[Theorem~1.2.2, pp.\,41-42]{GHA.III}),
\begin{equation}\label{DiracProperty1.b}
u=\mathcal{C}\big(u\big|_{\partial\Omega }^{{}^{\kappa-{\rm n.t.}}}\big)\,\,\text{ in }\,\,\Omega.
\end{equation}

We next arrive at a central result in this paper, regarding the action of the Cauchy-Clifford operator 
on generalized H\"older spaces in Ahlfors regular domains with compact boundaries.

\begin{theorem}\label{theor:ijiji}
Let $\Omega\subseteq{\mathbb{R}}^n$ be an Ahlfors regular domain whose boundary is compact. 
Set $\sigma:={\mathcal{H}}^{n-1}\lfloor\partial\Omega$ and fix an aperture parameter $\kappa>0$. 
Also, suppose $\omega$ is a growth function on $(0,\diam(\partial\Omega))$ which is doubling 
and Dini, and associate with it the function $\omega_Z$ as in \eqref{omega-cond:main}.
Then the following statements are true.

\begin{enumerate}
\item[(a)] For every given function $f\in{\mathscr{C}}^\omega(\partial\Omega)\otimes{\mathcal{C}}\!\ell_n$ 
the limit in \eqref{CauchyCliffordBoundary} exists for $\sigma$-a.e. $x\in\partial\Omega$, and 
the operator ${\mathbf{C}}$ thus defined induces {\rm (}modulo redefining functions on $\sigma$-nullsets{\rm )}  
a well-defined, linear, and bounded mapping 
\begin{equation}\label{mmkm.xxx}
{\mathbf{C}}:{\mathscr{C}}^\omega(\partial\Omega)\otimes{\mathcal{C}}\!\ell_n
\longrightarrow{\mathscr{C}}^{\omega_{\!Z}}(\partial\Omega)\otimes{\mathcal{C}}\!\ell_n,
\end{equation}
with operator norm controlled in terms of $n$, $\omega$, $\diam(\partial\Omega)$, and the upper Ahlfors regularity 
constant of $\partial\Omega$. Furthermore, for each $f\in{\mathscr{C}}^\omega(\partial\Omega)\otimes{\mathcal{C}}\!\ell_n$
the following jump-formula holds 
\begin{equation}\label{eawdde}
\left(\mathcal{C}f\big|_{\partial\Omega}^{{}^{\kappa-{\rm n.t.}}}\right)(x)
=\tfrac{1}{2}f(x)+({\mathbf{C}}f)(x)\,\,\text{ for $\sigma$-a.e. }\,\,x\in\partial\Omega.
\end{equation}
 
Moreover, under the additional assumption that $\omega$ satisfies \eqref{omega-cond:main.111}, 
\begin{equation}\label{mmkm}
{\mathbf{C}}:{\mathscr{C}}^\omega(\partial\Omega)\otimes{\mathcal{C}}\!\ell_n
\longrightarrow{\mathscr{C}}^\omega(\partial\Omega)\otimes{\mathcal{C}}\!\ell_n
\end{equation}
is a well-defined, linear, and bounded operator, and one has {\rm (}with $I$ denoting the identity operator{\rm )}
\begin{equation}\label{pmapc}
{\mathbf{C}}^2=\tfrac{1}{4}\,I\,\,\text{ on }\,\,{\mathscr{C}}^\omega(\partial\Omega)\otimes{\mathcal{C}}\!\ell_n.
\end{equation}

\item[(b)] Make the additional assumption that $\Omega$ is a uniform domain. 
Then the boundary-to-domain Cauchy-Clifford operator introduced in \eqref{CauchyCliffordDomain} 
is well-defined, linear, and bounded in the context
\begin{equation}\label{pnmq.xxx}
\mathcal{C}:{\mathscr{C}}^\omega(\partial\Omega)\otimes{\mathcal{C}}\!\ell_n\rightarrow
{\mathscr{C}}^{\omega_{\!Z}}(\Omega)\otimes{\mathcal{C}}\!\ell_n\equiv
{\mathscr{C}}^{\omega_{\!Z}}(\overline{\Omega})\otimes{\mathcal{C}}\!\ell_n,
\end{equation}
with operator norm controlled in terms of $n$, $\omega$, and $\Omega$. Also, there exists some finite constant 
$C_{\Omega,n,\omega}>0$ with the property that 
\begin{equation}\label{C1conseq}
\sup_{x\in\Omega}|({\mathcal{C}}f)(x)|+\sup_{x\in\Omega}\frac{|\nabla({\mathcal{C}}f)(x)|}{W_{\widetilde{\omega}}(\rho(x))}
\leq C_{\Omega,n,\omega}\,\|f\|_{{\mathscr{C}}^{\omega}(\partial\Omega)}
\end{equation} 
for each function $f\in{\mathscr{C}}^{\omega}(\partial\Omega)$. Furthermore, 
given any $f\in{\mathscr{C}}^{\omega}(\partial\Omega)\otimes{\mathcal{C}}\!\ell_n$, under the canonical identification of ${\mathcal{C}}f$ 
with a function in ${\mathscr{C}}^{\omega_Z}(\overline{\Omega})\otimes{\mathcal{C}}\!\ell_n$, one has the boundary trace formula
\begin{equation}\label{eq:-knbmdnjm.D+M.WACO}
\mathcal{C}f\big|_{\partial\Omega}=\tfrac{1}{2}f+{\mathbf{C}}f\,\,\text{ everywhere on }\,\,\partial\Omega.
\end{equation} 

Lastly, as a corollary of \eqref{pnmq.xxx}, if $\omega$ is also assumed to satisfy \eqref{omega-cond:main.111} then 
the boundary-to-domain Cauchy-Clifford operator introduced in \eqref{CauchyCliffordDomain} is well-defined, linear, and bounded in the context
\begin{equation}\label{pnmq}
\mathcal{C}:{\mathscr{C}}^\omega(\partial\Omega)\otimes{\mathcal{C}}\!\ell_n
\longrightarrow{\mathscr{C}}^\omega(\overline{\Omega})\otimes{\mathcal{C}}\!\ell_n.
\end{equation}
\end{enumerate}
\end{theorem}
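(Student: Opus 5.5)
The plan is to treat this as a specialization of Theorem~\ref{theor:uewmp.222} (via Theorem~\ref{thm:GHA-T1}, Theorem~\ref{thm:GHA-T1.JUMP}, and Remark~\ref{KJag.GFF.D+M}) to the Clifford-valued kernel $\vec{k}$ from \eqref{eq:kforcc}. We regard ${\mathcal{C}}\!\ell_n$ as a subalgebra of ${\mathbb{R}}^{2^n\times 2^n}$. Then each $k_j$ is a matrix-valued, odd, smooth function, positive homogeneous of degree $1-n$, and by \eqref{eq:cc-div.vartheta} it is divergence-free with $\vartheta=-1$. Writing $f=\sum_I f_Ie_I$, the integrand $\frac{x-y}{|x-y|^n}\odot\nu(y)\odot f(y)$ equals $\varpi_{n-1}\sum_j\nu_j(y)k_j(x-y)\odot f(y)=\varpi_{n-1}\langle\nu(y),\vec{k}(x-y)\rangle f(y)$, where $f$ acts by left multiplication (matrix times vector). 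So ${\mathbf{C}}$ and $\mathcal{C}$ are exactly the operators $T$ and $\mathcal{T}$ of \eqref{16778.bbb.222.GDL} and \eqref{GDL-TH.it.1}.

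\textbf{Part (a).} Apply Theorem~\ref{thm:GHA-T1} entrywise, as in Remark~\ref{KJag.GFF.D+M}. This gives that the limit exists $\sigma$-a.e. and that ${\mathbf{C}}$ is bounded into ${\mathscr{C}}^{\omega_Z}(\partial\Omega)\otimes{\mathcal{C}}\!\ell_n$; the norm on the tensor space is equivalent to the componentwise one. The jump formula \eqref{eawdde} is Theorem~\ref{thm:GHA-T1.JUMP} with $-\vartheta/2=1/2$. Under \eqref{omega-cond:main.111} we have $\omega_Z\le C\omega$, which yields \eqref{mmkm}.

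The identity ${\mathbf{C}}^2=\frac14 I$ is the main obstacle. I would first prove it on $L^2(\partial\Omega,\sigma)\otimes{\mathcal{C}}\!\ell_n$ and then restrict, since ${\mathscr{C}}^\omega\subset L^2$ and the two definitions of ${\mathbf{C}}f$ agree a.e. This needs $\partial\Omega$ to be UR, which an Ahlfors regular domain need not be. So instead I would argue directly on the Hölder space. Take $f\in{\mathscr{C}}^\omega$. The function $u:=\mathcal{C}f$ is left-monogenic in $\Omega$, has bounded nontangential maximal function (by Lemma~\ref{lemma:pokor}, whose hypotheses follow from \eqref{JUfsf.D+M.1}), and has trace $\frac12 f+{\mathbf{C}}f$ a.e. By the reproducing formula \eqref{DiracProperty1.b}, $\mathcal{C}(\frac12f+{\mathbf{C}}f)=\mathcal{C}f$. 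Taking traces again with \eqref{eawdde}, now applied to $g=\frac12f+{\mathbf{C}}f\in{\mathscr{C}}^\omega$, gives
\[
\tfrac12 g+{\mathbf{C}}g=\tfrac12f+{\mathbf{C}}f,
\]
which simplifies to ${\mathbf{C}}^2f=\frac14 f$.

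If $\Omega$ is unbounded, a decay condition is needed at infinity. $\mathcal{C}f$ decays like $|x|^{1-n}$, so ${\mathcal{N}}_\kappa u$ lies in the weighted $L^1$ space, and the reproducing formula applies in exterior domains as well, as in \cite{GHA.III}. I would check carefully that \eqref{DiracProperty1.a} holds using these facts.

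\textbf{Part (b).} With $\Omega$ uniform, Theorem~\ref{theor:uewmp.222}(ii) together with Remark~\ref{KJag.GFF.D+M} gives \eqref{pnmq.xxx}. Estimate \eqref{C1conseq} is the second inequality in \eqref{ustgik.D+M}, that is, Lemma~\ref{lemma:pokor} with $\mathcal{T}1$ constant by \eqref{JUfsf.D+M.1}. The trace formula \eqref{eq:-knbmdnjm.D+M.WACO} is \eqref{eq:-knbmdnjm.D+M} with $\vartheta=-1$. Finally, \eqref{pnmq} follows from $\omega_Z\le C\omega$ and Lemma~\ref{holderclosure}.
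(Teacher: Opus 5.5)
Your proposal is correct and follows the paper's proof essentially verbatim. As in the paper, all claims other than $\mathbf{C}^2=\tfrac14 I$ come from specializing Theorems~\ref{thm:GHA-T1}, \ref{thm:GHA-T1.JUMP} and \ref{theor:uewmp.222}(ii) to the kernel \eqref{eq:kforcc} via Remark~\ref{KJag.GFF.D+M}, and $\mathbf{C}^2=\tfrac14 I$ comes from applying \eqref{DiracProperty1.b} to $u=\mathcal{C}f$ and taking traces twice with \eqref{eawdde}. Your write-up is, if anything, slightly more careful than the paper's on two points: you obtain the bound on $u$ from Lemma~\ref{lemma:pokor} rather than from \eqref{C1conseq}/\eqref{pnmq.xxx}, which are stated under part (b)'s uniformity hypothesis, and you check that $\mathcal{C}f=O(|x|^{1-n})$ supplies the decay the reproducing formula needs when $\Omega$ is an exterior domain.
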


\begin{proof}
All claims, save for \eqref{pmapc}, are obtained by specializing Theorem~\ref{theor:uewmp.222}
(more precisely, the results of the previous section) to the case when the vector field $\vec{k}=(k_j)_{1\leq j\leq n}$ 
has components as in \eqref{eq:kforcc}. Indeed, as explained in \eqref{eq:kforcc}-\eqref{CauchyCliffordBoundary}, this gives 
rise to a boundary-to-domain generalized double layer that is precisely the boundary-to-domain Cauchy-Clifford operator defined 
in \eqref{CauchyCliffordDomain}, and to a boundary-to-boundary generalized double layer that is precisely the boundary-to-boundary 
Cauchy-Clifford operator defined in \eqref{CauchyCliffordBoundary}. Note that making the aforementioned identifications means that 
now we need a ``matrix-vector'' version of Theorem~\ref{theor:uewmp.222} of the sort discussed in Remark~\ref{KJag.GFF.D+M}.

Thus, in order to complete the proof of the current theorem there remains to justify \eqref{pmapc}
(assuming $\omega$ also satisfies \eqref{omega-cond:main.111}). To this end, given any 
$f\in{\mathscr{C}}^\omega(\partial\Omega)\otimes{\mathcal{C}}\!\ell_n$, define $u:=\mathcal{C}f$. 
By design, $u$ belongs to $\mathscr{C}^\infty(\Omega)\otimes{\mathcal{C}}\!\ell_n$ and $D_Lu=0$ in $\Omega$. 
Also, \eqref{C1conseq} (or the boundedness of the operator \eqref{pnmq.xxx}) implies
\begin{equation}\label{hdesCC.EW}
\sup_{x\in\Omega}|u(x)|\leq C_{\Omega,n,\omega}
\|f\|_{{\mathscr{C}}^\omega(\partial\Omega)\otimes{\mathcal{C}}\!\ell_n}<+\infty.
\end{equation}
Keeping in mind that ${\mathcal{N}}_{\kappa}u$ is a lower-semicontinuous function, from \eqref{hdesCC.EW} we 
conclude that ${\mathcal{N}}_{\kappa}u\in L^\infty(\partial\Omega,\sigma)\subset L^1(\partial\Omega,\sigma)$
since $\partial\Omega$ is compact and therefore has finite measure. Also, the jump-formula
\eqref{eawdde} gives
\begin{equation}\label{fmqp}
u\big|_{\partial\Omega}^{{}^{\kappa-{\rm n.t.}}}
=\big(\mathcal{C}f\big|_{\partial\Omega}^{{}^{\kappa-{\rm n.t.}}}\big)
=\big(\tfrac{1}{2}\,I+{\mathbf{C}}\big)f\,\,\text{ at $\sigma$-a.e. point in }\,\,\partial\Omega.
\end{equation}
From this and \eqref{DiracProperty1.a}-\eqref{DiracProperty1.b} we then obtain 
\begin{equation}\label{fmdae}
u={\mathcal{C}}\big(u\big|_{\partial\Omega}^{{}^{\kappa-{\rm n.t.}}}\big)
={\mathcal{C}}\big(\big(\tfrac{1}{2}\,I+{\mathbf{C}}\big)f\big)\,\,\text{ in }\,\,\Omega.
\end{equation}
Notice that $\big(\tfrac{1}{2}\,I+{\mathbf{C}}\big)f\in{\mathscr{C}}^\omega(\partial\Omega)\otimes{\mathcal{C}}\!\ell_n$ 
by \eqref{mmkm}. As such, we may once again employ the jump-formula \eqref{eawdde} for this function. 
Together with \eqref{fmqp} and \eqref{fmdae} this yields  
\begin{equation}\label{fmdaw}
\big(\tfrac{1}{2}\,I+{\mathbf{C}}\big)f
=u\big|_{\partial\Omega}^{{}^{\kappa-{\rm n.t.}}}
={\mathcal{C}}\Big(\big(\tfrac{1}{2}\,I+{\mathbf{C}}\big)f\Big)\Big|_{\partial\Omega}^{{}^{\kappa-{\rm n.t.}}}
=\big(\tfrac{1}{2}\,I+{\mathbf{C}}\big)\big(\tfrac{1}{2}\,I+{\mathbf{C}}\big)f, 
\end{equation}
at $\sigma$-a.e. point on $\partial\Omega$. Now \eqref{pmapc} follows from this and simple algebra, 
completing the proof of Theorem~\ref{theor:ijiji}.
\end{proof}

In the last portion of this section we briefly review the harmonic single potential operator 
and highlight its relationship to the Cauchy-Clifford integral operator. To set the stage, 
assume $\Omega\subseteq{\mathbb{R}}^n$ is an Ahlfors regular domain whose boundary is compact. 
Abbreviate $\sigma:={\mathcal{H}}^{n-1}\lfloor\partial\Omega$ and denote by $\nu$ the geometric 
measure theoretic outward unit normal to $\Omega$. In this setting, define the action of the harmonic 
single layer operator on each $f\in L^1(\partial\Omega,\sigma)\otimes{\mathcal{C}}\!\ell_n$ as 
\begin{equation}\label{Rad-1}
{\mathcal{S}}f(x):=\int_{\partial\Omega}E(x-y)f(y)\,d\sigma(y)\,\,\text{ for each }\,\,x\in\Omega,
\end{equation} 
where $E$ denotes the standard fundamental solution for the Laplacian. That is, for each 
$x\in{\mathbb{R}}^n\setminus\{0\}$, 
\begin{equation}\label{laplacianfundamentalsol}
E(x):=\left\lbrace
\begin{array}{ll}
\displaystyle
\frac{1}{\varpi_{n-1}(2-n)}\frac{1}{|x|^{n-2}}, &\text{ if }\,\,n\geq 3, 
\\[8pt]
\displaystyle 
\frac{1}{2\pi}\ln|x|, &\text{ if }\,\,n=2. 
\end{array}
\right.
\end{equation}
From definitions and the fact that $\nu\odot\nu=-1$ at $\sigma$-a.e. point on $\partial\Omega$, it follows 
that for each $f\in L^1(\partial\Omega,\sigma)\otimes{\mathcal{C}}\!\ell_n$ we have
\begin{equation}\label{yrDD0u42}
D_L{\mathcal{S}}f=-{\mathcal{C}}(\nu\odot f)\,\,\text{ in }\,\,\Omega. 
\end{equation}
In particular, in light of the identification in \eqref{identificationrncln}, 
\begin{equation}\label{singlelayergrad}
\nabla({\mathcal{S}}1)\equiv D_L{\mathcal{S}}1=-{\mathcal{C}}\nu\,\,\text{ in }\,\,\Omega.
\end{equation}

\section{Proof of Theorem~\ref{theor:uewmp}}\label{treED}

In broad outline, the proof proceeds along the original approach in \cite[pp.\,998-1005]{MitMitVer16}, where the 
authors have dealt with the classical growth function $\omega(t):=t^{\alpha}$ for each $t\in(0,\infty)$.

\begin{proof}[Proof of Theorem~\ref{theor:uewmp}]
Throughout, abbreviate $\rho(x):=\dist(x,\partial\Omega)$ for each $x\in\Omega$ and 
set $D:=\diam(\partial\Omega)$. Also, extend $\omega$ to $\widetilde{\omega}$, defined 
as in Remark~\ref{rem:wextension}, i.e., $\widetilde{\omega}(t):=\omega(\min\{t,D\})$ for each $t\in(0,\infty)$.
Finally, associate with $\widetilde{\omega}$ the function $W_{\widetilde{\omega}}$ as in \eqref{W+:general}. 

\vskip 0.08in
\noindent\underline{\it Proof of $(a)\Rightarrow(f)$, and of estimate \eqref{mainthmeq1}}. 
Work under the assumption that $\Omega\subseteq{\mathbb{R}}^n$ is a $\mathscr{C}^{1,\omega}$-domain with 
compact boundary (hence also a {\rm UR} domain and a uniform domain, as observed in Remark~\ref{YrafV.a}). 
Let $P$ be an odd homogeneous polynomial of degree $\ell\geq 1$ and suppose ${\mathbb{T}}:={\mathbb{T}}_{+}$ 
is associated with $P$ as in \eqref{defT}. For now, make the additional assumption that $P$ is harmonic. 
The goal is to prove that for every function $f\in{\mathscr{C}}^{\omega}(\partial\Omega)$ we have
\begin{equation}\label{inductiongoal}
\sup_{x\in\Omega}|({\mathbb{T}}f)(x)|
+\sup_{x\in\Omega}\frac{|\nabla(\mathbb{T}f)(x)|}{W_{\widetilde{\omega}}(\rho(x))} 
\leq C^{\ell}2^{\ell^2}\norm{P}_{L^1(S^{n-1},{\mathcal{H}}^{n-1})}\norm{f}_{{\mathscr{C}}^{\omega}(\partial\Omega)},
\end{equation} 
where $C\in[1,\infty)$ depends only on the dimension $n$, $\omega$, $D$, $\norm{\nu}_{\mathscr{C}^{\omega}(\partial\Omega)}$, 
and the upper Ahlfors regularity constant of $\partial\Omega$. 

The strategy is to prove \eqref{inductiongoal} by induction on the degree 
$\ell\in 2{\mathbb{N}}-1$. If $\ell=1$ then $P(x)=\sum_{j=1}^n a_j x_j$ for every 
$x=(x_1,\dots,x_n)\in{\mathbb{R}}^n$. Hence, from \eqref{boundpol}, 
\begin{equation}\label{maxprop}
\max_{1\leq j\leq n}|a_j|\leq\norm{P}_{L^{\infty}(S^{n-1},{\mathcal{H}}^{n-1})}
\leq c_n\norm{P}_{L^1(S^{n-1},{\mathcal{H}}^{n-1})}.
\end{equation} 
The idea is to use Lemma~\ref{lemma:pokor} to deal with the case $\ell=1$ of \eqref{inductiongoal}. 
To check that Lemma~\ref{lemma:pokor} is indeed applicable, note that $\nu$ is currently known to belong 
to ${\mathscr{C}}^\omega(\partial\Omega)\otimes{\mathcal{C}}\!\ell_n$ hence, from \eqref{C1conseq},
\begin{equation}\label{maineqC1}
\sup_{x\in\Omega}|({\mathcal{C}}\nu)(x)|
+\sup_{x\in\Omega}\frac{|\nabla(\mathcal{C}\nu)(x)|}{W_{\widetilde{\omega}}(\rho(x))}<+\infty.
\end{equation}
Upon observing from \eqref{Rad-1} that in the present case ${\mathbb{T}}$ may be expressed as 
\begin{equation}\label{7433W}
{\mathbb{T}}=\varpi_{n-1}\sum_{j=1}^n a_j\partial_j{\mathcal{S}},
\end{equation} 
and using \eqref{singlelayergrad} we obtain, at each $x\in\Omega$, 
\begin{align}\label{bdxyy.1}
|({\mathbb{T}}1)(x)|&\leq \varpi_{n-1}\Big(\max_{1\leq j\leq n}|a_j|\Big)|({\mathcal{C}}\nu)(x)|\,\,\text{ and}
\\[4pt]
\big|\nabla(\mathbb{T}1)(x)\big|&\leq \varpi_{n-1}\Big(\max_{1\leq j\leq n}|a_j|\Big)\big|\nabla({\mathcal{C}}\nu)(x)\big|.
\label{bdxyy.2}
\end{align}
In concert, \eqref{bdxyy.1}, \eqref{bdxyy.2}, \eqref{maineqC1}, and \eqref{maxprop} imply that \eqref{nhgfnh} holds for ${\mathbb{T}}$ 
with the constant $A_2$ in \eqref{nhgfnh} depending linearly on $\norm{P}_{L^1(S^{n-1},{\mathcal{H}}^{n-1})}$.
The other hypotheses in Lemma~\ref{lemma:pokor} are guaranteed by the structure of 
${\mathbb{T}}$ and \eqref{54332e90}. In summary, there exists $C\in[1,\infty)$ such that \eqref{inductiongoal} holds for $\ell=1$, 
by Lemma~\ref{lemma:pokor}.

Next, fix $\ell\geq 3$ and assume \eqref{inductiongoal} is satisfied when the polynomial entering 
the definition of ${\mathbb{T}}$ has degree $\leq\ell-2$. Pick an arbitrary odd harmonic homogeneous 
polynomial $P$ of degree $\ell$ and associate with it the operator ${\mathbb{T}}$ as in \eqref{defT}.
Also, for $r,s\in\{1,\dots,n\}$, let $P_{rs}(x)$ and $k_{rs}$ be as in Lemma~\ref{lemmapol} for this choice of $P$. 
For each $r,s\in\{1,\dots,n\}$ set 
\begin{equation}\label{Defkrs}
k^{rs}(x):=\frac{P_{rs}(x)}{|x|^{n+\ell-3}},\qquad x\in{\mathbb{R}}^n\setminus\{0\},
\end{equation} 
then use this as a kernel to define an integral operator acting on every  
$g\in{\mathscr{C}}^{\omega}(\partial\Omega)\otimes{\mathcal{C}}\!\ell_n$, say 
$g=\sum_{l=0}^n\sum_{|I|=l}^{'}g_I e_I$ with each $g_I$ in ${\mathscr{C}}^{\omega}(\partial\Omega)$,
according to 
\begin{align}\label{defTrs}
{\mathbb{T}}^{\,rs}g(x) &:=\int_{\partial\Omega}k^{rs}(x-y)g(y)\,d\sigma(y) 
\nonumber\\[4pt]
& =\sum_{l=0}^n\sideset{}{'}\sum_{|I|=l}
\left(\int_{\partial\Omega}k^{rs}(x-y)g_I(y)\,d\sigma(y)\right)e_I,\quad x\in\Omega.
\end{align}
Based on the induction hypothesis (used component-wise), \eqref{eq:clifford2n2}, \eqref{PrsDef}, and \eqref{boundpol} 
we may estimate 
\begin{align}\label{proofmainthmeq1}
\sup_{x\in\Omega}|({\mathbb{T}}^{\,rs}g)(x)|
+ &\sup_{x\in\Omega}\frac{|\nabla({\mathbb{T}}^{\,rs}g)(x)|}{W_{\widetilde{\omega}}(\rho(x))} 
\nonumber\\[6pt]
&\leq 2C^{\ell-2}2^{(\ell-2)^2}\norm{P_{rs}}_{L^1(S^{n-1},{\mathcal{H}}^{n-1})} 
\norm{g}_{{\mathscr{C}}^{\omega}(\partial\Omega)\otimes{\mathcal{C}}\!\ell_n} 
\nonumber\\[4pt]
&\leq c_n C^{\ell-2}2^{(\ell-2)^2}2^{\ell}\norm{P}_{L^1(S^{n-1},{\mathcal{H}}^{n-1})} 
\norm{g}_{{\mathscr{C}}^{\omega}(\partial\Omega)\otimes{\mathcal{C}}\!\ell_n}.
\end{align}

For each $r,s\in\{1,\dots,n\}$ and 
$g\in{\mathscr{C}}^{\omega}(\partial\Omega)\otimes{\mathcal{C}}\!\ell_n$ let us also define 
\begin{equation}\label{tr46-TR}
({\mathbb{T}}_{rs}g)(x):=\int_{\partial\Omega}k_{rs}(x-y)\odot g(y)\,d\sigma(y),\qquad\forall\,x\in\Omega.
\end{equation} 
By \eqref{lemmapoleq2}, for any real-valued function $f\in{\mathscr{C}}^{\omega}(\partial\Omega)\hookrightarrow
{\mathscr{C}}^{\omega}(\partial\Omega)\otimes{\mathcal{C}}\!\ell_n$ we then have
\begin{equation}\label{proofmainthmeq2}
({\mathbb{T}}f)(x)=\sum_{r,s=1}^{n}[{\mathbb{T}}_{rs}f(x)]_s\,\,\text{ for each }\,\,x\in\Omega,
\end{equation}
where, as usual, $[\cdot]_s$ singles out the $s$-th component of a vector in ${\mathbb{R}}^n$. 
If the set $\Omega$ is unbounded, fix $x\in\Omega$ along with $R_1\in(0,\dist(x,\partial\Omega))$ 
and $R_2>\dist(x,\partial\Omega)+D$, then define 
$\Omega_{R_1, R_2}:=\big(B(x,R_2)\setminus\overline{B(x,R_1)}\,\big)\cap\Omega$, 
which is a bounded ${\mathscr{C}}^{1,\omega}$-domain such that 
$\partial\Omega_{R_1,R_2}=\partial B(x,R_2)\cup\partial B(x,R_1)\cup\partial\Omega$. 
Denoting by $\nu$ the outward unit normal to $\Omega_{R_1,R_2}$ and by $\sigma$ the surface measure on
$\partial\Omega_{R_1,R_2}$, for each $r,s\in\{1,\dots,n\}$ we have
\begin{align}\label{gf6fDad}
\int_{\partial\Omega_{R_1,R_2}}k_{rs}(x-y)\odot\nu(y)\,d\sigma(y) 
&=-\int_{\Omega_{R_1,R_2}}(D_R k_{rs})(x-y)\,dy 
\nonumber\\[4pt] 
&=\frac{\ell-1}{n+\ell-3}\int_{\Omega_{R_1,R_2}}
\frac{\partial}{\partial y_r}\left(\frac{P_{rs}(x-y)}{|x-y|^{n+\ell-3}}\right)\,dy 
\nonumber\\[4pt] 
&=\frac{\ell-1}{n+\ell-3}\int_{\partial\Omega_{R_1,R_2}}k^{rs}(x-y)\nu_r(y)\,d\sigma(y),
\end{align}
where the first equality follows from \eqref{DiracProperty2}, the second from \eqref{lemmapoleq3}, 
and the third from \eqref{Defkrs} and the Divergence Theorem. Therefore, for each $r,s\in\{1,\dots,n\}$ we may write 
\begin{align}\label{mainthmlongeq}
({\mathbb{T}}_{rs} &\nu)(x)=\int_{\partial\Omega}k_{rs}(x-y)\odot\nu(y)\,d\sigma(y)
\nonumber\\[4pt] 
&=\int_{\partial\Omega_{R_1, R_2}}k_{rs}(x-y)\odot\nu(y)\,d{\mathcal{H}}^{n-1}(y)
-\int_{\partial B(x,R_1)}k_{rs}(x-y)\odot\frac{x-y}{|x-y|}\,d{\mathcal{H}}^{n-1}(y)
\nonumber\\[4pt] 
&\quad+\int_{\partial B(x,R_2)}k_{rs}(x-y)\odot\frac{x-y}{|x-y|}\,d{\mathcal{H}}^{n-1}(y)
\nonumber\\[4pt] 
&=\frac{\ell-1}{n+\ell-3}\int_{\partial\Omega_{R_1,R_2}}k^{rs}(x-y)\nu_r(y)\,d{\mathcal{H}}^{n-1}(y) 
-\int_{S^{n-1}}k_{rs}(\upsilon)\odot\upsilon\,d{\mathcal{H}}^{n-1}(\upsilon)
\nonumber\\[4pt] 
&\quad+\int_{S^{n-1}}k_{rs}(\upsilon)\odot\upsilon\,d{\mathcal{H}}^{n-1}(\upsilon)
\nonumber\\[4pt] 
&=\frac{\ell-1}{n+\ell-3}\left(\int_{\partial\Omega}k^{rs}(x-y)\nu_r(y)\,d\sigma(y)\right.
-\int_{\partial B(x,R_1)}k^{rs}(x-y)\frac{x_r-y_r}{|x-y|}\,d{\mathcal{H}}^{n-1}(y) 
\nonumber\\[4pt] 
&\quad+\left. \int_{\partial B(x,R_2)}k^{rs}(x-y)\frac{x_r-y_r}{|x-y|}\,d{\mathcal{H}}^{n-1}(y)\right)
\nonumber\\[4pt] 
&=\frac{\ell-1}{n+\ell-3}\left(({\mathbb{T}}^{\,rs}\nu_r)(x)
-\int_{S^{n-1}}k^{rs}(\upsilon)\upsilon_r\,d{\mathcal{H}}^{n-1}(\upsilon )\right.
+\left. \int_{S^{n-1}}k^{rs}(\upsilon)\upsilon_r\,d{\mathcal{H}}^{n-1}(\upsilon)\right)
\nonumber\\[4pt] 
&=\frac{\ell-1}{n+\ell-3}({\mathbb{T}}^{\,rs}\nu_r)(x).
\end{align}
From \eqref{mainthmlongeq} and \eqref{proofmainthmeq1} used with $f:=\nu_r\in{\mathscr{C}}^{\omega}(\partial\Omega)$, 
we obtain, for $r,s\in\{1,\dots,n\}$, 
\begin{align}\label{proofmainthmeq5}
\sup_{x\in\Omega}|({\mathbb{T}}_{rs}\nu)(x)| 
&+\sup_{x\in\Omega}\frac{|\nabla({\mathbb{T}}_{rs}\nu)(x)|}{W_{\widetilde{\omega}}(\rho(x))}
\nonumber\\[4pt] 
&\leq\sup_{x\in\Omega}|({\mathbb{T}}^{\,rs}\nu_r)(x)|
+\sup_{x\in\Omega}\frac{|\nabla({\mathbb{T}}^{\,rs}\nu_r)(x)|}{W_{\widetilde{\omega}}(\rho(x))}
\nonumber\\[4pt] 
&\leq c_n C^{\ell-2}2^{(\ell-2)^2}2^\ell\norm{P}_{L^1(S^{n-1},{\mathcal{H}}^{n-1})}
\norm{\nu}_{{\mathscr{C}}^{\omega}(\partial\Omega)}.
\end{align}

Assume now that $\Omega$ is bounded. Fix a point $x\in\Omega$ and define $\Omega_{R_1}:=\Omega\setminus\overline{B(x,R_1)}$ 
with $R_1$ as before, so that $\partial\Omega_{R_1}=\partial B(x,R_1)\cup\partial\Omega$. 
Proceeding as in the past, in place of \eqref{mainthmlongeq} we now obtain 
\begin{align}\label{proofmainthmeq3}
({\mathbb{T}}_{rs}\nu)(x) &=\frac{\ell-1}{n+\ell-3}({\mathbb{T}}^{\,rs}\nu_r)(x) 
-\frac{\ell-1}{n+\ell-3}\int_{S^{n-1}}k^{rs}(\upsilon)\upsilon_r\,d{\mathcal{H}}^{n-1}(\upsilon)
\nonumber\\[4pt] 
&\quad-\int_{S^{n-1}}k_{rs}(\upsilon)\odot\upsilon\,d{\mathcal{H}}^{n-1}(\upsilon),
\end{align}
for each $r,s\in\{1,\dots,n\}$. From \eqref{Defkrs}, \eqref{PrsDef}, \eqref{boundpol}, and \eqref{lemmapoleq1} we see that
\begin{equation}\label{proofmainthmeq4}
\norm{k^{rs}}_{L^{\infty}(S^{n-1},{\mathcal{H}}^{n-1})} 
+\norm{k_{rs}}_{L^{\infty}(S^{n-1},{\mathcal{H}}^{n-1})}\leq c_n 2^\ell\norm{P}_{L^{1}(S^{n-1},{\mathcal{H}}^{n-1})}.
\end{equation}
In turn, from \eqref{proofmainthmeq3}, \eqref{proofmainthmeq4}, and the fact that 
$\norm{\nu}_{{\mathscr{C}}^{\omega}(\partial\Omega)}\geq\sup_{\partial\Omega}|\nu|=1$ , we conclude that 
\eqref{proofmainthmeq5} also holds (with a possibly different dimensional constant $c_n\in(0,\infty)$) in the case 
when $\Omega$ is bounded.

Pressing on, fix $r,s\in\{1,\dots,n\}$ arbitrary and, for each 
$g\in{\mathscr{C}}^{\omega}(\partial\Omega)\otimes{\mathcal{C}}\!\ell_n$, set 
\begin{equation}\label{6f4rded}
{\widetilde{\mathbb{T}}}_{rs}g(x):=\int_{\partial\Omega}k_{rs}(x-y)\odot\nu(y)\odot g(y)\,d\sigma(y), 
\qquad\forall\,x\in\Omega.
\end{equation} 
Note that ${\widetilde{\mathbb{T}}}_{rs}1={\mathbb{T}}_{rs}\nu$, hence \eqref{proofmainthmeq5} yields 
\begin{align}\label{t5rdd9jG}
\sup_{x\in\Omega}|({\widetilde{\mathbb{T}}}_{rs}1)(x)| 
&+\sup_{x\in\Omega}\frac{|\nabla({\widetilde{\mathbb{T}}}_{rs}1)(x)|}{W_{\widetilde{\omega}}(\rho(x))}
\nonumber\\[6pt]
&\leq c_n C^{\ell-2}2^{(\ell-2)^2}2^\ell\norm{P}_{L^1(S^{n-1},{\mathcal{H}}^{n-1})} 
\norm{\nu}_{{\mathscr{C}}^{\omega}(\partial\Omega)}.
\end{align}
Thanks to the homogeneity properties of $k_{rs}$ and \eqref{lemmapoleq1} we have
\begin{equation}\label{743rtgP}
|k_{rs}(x-y)\odot\nu(y)|\leq c_n\frac{\norm{k_{rs}}_{L^{\infty}(S^{n-1},{\mathcal{H}}^{n-1})}}{|x-y|^{n-1}} 
\leq\frac{c_n 2^\ell\norm{P}_{L^{1}(S^{n-1},{\mathcal{H}}^{n-1})}}{|x-y|^{n-1}},
\end{equation} 
\begin{equation}\label{OGR-tt}
|\nabla_x(k_{rs}(x-y)\odot\nu(y))|\leq c_n\frac{\norm{\nabla k_{rs}}_{L^{\infty}(S^{n-1},{\mathcal{H}}^{n-1})}}{|x-y|^{n}} 
\leq\frac{c_n 2^\ell\norm{P}_{L^{1}(S^{n-1},{\mathcal{H}}^{n-1})}}{|x-y|^{n}},
\end{equation} 
so that we may invoke Lemma~\ref{lemma:pokor} to obtain that, for each
$g\in{\mathscr{C}}^{\omega}(\partial\Omega)\otimes{\mathcal{C}}\!\ell_n$,
\begin{align}\label{86432wdC}
\sup_{x\in\Omega}| &({\widetilde{\mathbb{T}}}_{rs}g)(x)| 
+\sup_{x\in\Omega}\frac{|\nabla({\widetilde{\mathbb{T}}}_{rs}g)(x)|}{W_{\widetilde{\omega}}(\rho(x))} 
\nonumber\\[6pt] 
&\leq C_{\Omega,n,\omega}2^\ell\Big(1+C_\omega\,\omega(D)+C^{\ell-2}2^{(\ell-2)^2}
\norm{\nu}_{{\mathscr{C}}^{\omega}(\partial\Omega)}\Big)\norm{P}_{L^1(S^{n-1},{\mathcal{H}}^{n-1})}  
\norm{g}_{{\mathscr{C}}^{\omega}(\partial\Omega)\otimes{\mathcal{C}}\!\ell_n}
\nonumber\\[6pt] 
&\leq C_{\Omega,n,\omega}2^\ell\Big(C^{\ell-2}2^{(\ell-2)^2}\norm{\nu}_{{\mathscr{C}}^{\omega}(\partial\Omega)}+1\Big) 
\norm{P}_{L^1(S^{n-1},{\mathcal{H}}^{n-1})}\norm{g}_{{\mathscr{C}}^{\omega}(\partial\Omega)\otimes{\mathcal{C}}\!\ell_n}.
\end{align} 
By specializing this to the case when $g:=\nu\odot f$ with $f\in{\mathscr{C}}^{\omega}(\partial\Omega)$ arbitrary, 
and keeping in mind that $\nu\odot\nu=-1$, we arrive at the conclusion that 
\begin{align}\label{teWD}
\sup_{x\in\Omega}| &({\mathbb{T}}_{rs}f)(x)|
+\sup_{x\in\Omega}\frac{|\nabla({\mathbb{T}}_{rs}f)(x)|}{W_{\widetilde{\omega}}(\rho(x))}
\\[6pt]
&\leq C_{\Omega,n,w}2^\ell\big(C^{\ell-2}2^{(\ell-2)^2}\norm{\nu}_{{\mathscr{C}}^{\omega}(\partial\Omega)}+1\big) 
\norm{P}_{L^1(S^{n-1},{\mathcal{H}}^{n-1})}\norm{\nu}_{{\mathscr{C}}^{\omega}(\partial\Omega)} 
\norm{f}_{{\mathscr{C}}^{\omega}(\partial\Omega)}
\nonumber
\end{align} 
for every $f\in{\mathscr{C}}^{\omega}(\partial\Omega)$. In turn, from \eqref{teWD} and \eqref{proofmainthmeq2} we obtain 
\begin{align}\label{proofmainthmeq6}
\sup_{x\in\Omega}| &(\mathbb{T}f)(x)|
+\sup_{x\in\Omega}\frac{|\nabla({\mathbb{T}}f)(x)|}{W_{\widetilde{\omega}}(\rho(x))}
\\[6pt] 
&\leq C_{\Omega,n,w}2^\ell\big(C^{\ell-2}2^{(\ell-2)^2}\norm{\nu}_{{\mathscr{C}}^{\omega}(\partial\Omega)}+1\big) 
\norm{P}_{L^1(S^{n-1},{\mathcal{H}}^{n-1})}\norm{\nu}_{{\mathscr{C}}^{\omega}(\partial\Omega)} 
\norm{f}_{{\mathscr{C}}^{\omega}(\partial\Omega)},
\nonumber
\end{align} 
for every $f\in{\mathscr{C}}^{\omega}(\partial\Omega)$. The current working hypothesis is that $\ell\in{\mathbb{N}}$ 
is odd and $\ell\geq 3$, hence $2^{(\ell-2)^2}2^\ell\leq 2^{\ell^2}$ and $2^\ell\leq C^{\ell-2}2^{\ell^{2}}$ 
if $C\geq 1$. Therefore, in such a scenario, there exists some $C(n,\omega,\Omega)\in(0,\infty)$ such that 
\begin{equation}\label{proofmainthmeq8}
C_{\Omega,n,w}2^\ell\big(C^{\ell-2}2^{(\ell-2)^2}\norm{\nu}_{{\mathscr{C}}^{\omega}(\partial\Omega)}+1\big) 
\norm{\nu}_{{\mathscr{C}}^{\omega}(\partial\Omega)}\leq C(n,\omega,\Omega)C^{\ell-2}2^{\ell^2},
\end{equation} 
where $C_{n,\omega,\Omega}$ is the constant appearing in \eqref{proofmainthmeq6}. This suggests 
that, to begin with, we take $C\geq\max\big\{1\,,\,\sqrt{C(n,\omega,\Omega)}\,\big\}$ which then 
ensures both that $C\geq 1$ and that $C(n,\omega,\Omega)C^{\ell-2}2^{\ell^2}\leq C^\ell 2^{\ell^2}$. 
Ultimately, this choice, together with \eqref{proofmainthmeq6} and \eqref{proofmainthmeq8}, 
proves \eqref{inductiongoal} and therefore the induction is complete.

Moving on, we claim that the additional assumption that $P$ is harmonic may be eliminated. 
The starting point in the justification of this claim is the observation that any homogeneous polynomial 
$P$ in ${\mathbb{R}}^n$ may be written as $P(x)=P_0(x)+|x|^2Q_0(x)$ for every $x\in{\mathbb{R}}^n$, where 
$P_0$ and $Q_0$ are homogeneous polynomials in ${\mathbb{R}}^n$, and $P_0$ is harmonic (cf. \cite[p.\,69]{Ste70}). 
If $P$ has degree $\ell=2N+1$ for some $N\in{\mathbb{N}}_0$, by iterating this process finitely many steps 
we conclude that for each $j\in\{0,1,\dots,N\}$ there exists a harmonic homogeneous polynomial 
$P_j$ of degree $\ell-2j$ such that 
\begin{equation}\label{Pdecomposition}
P(x)=\sum_{j=0}^{N}|x|^{2j}P_j(x),\qquad\forall\,x\in{\mathbb{R}}^n.
\end{equation}
Since the restrictions to the unit sphere of any two homogeneous harmonic polynomials of different 
degrees are orthogonal in $L^2(S^{n-1},{\mathcal{H}}^{n-1})$ (cf. \cite[p.\,69]{Ste70}), we have
\begin{equation}\label{uteD}
\norm{P}_{L^2(S^{n-1},{\mathcal{H}}^{n-1})}^2=\sum_{j=0}^{N}\norm{P_j}_{L^2(S^{n-1},{\mathcal{H}}^{n-1})}^2.
\end{equation} 
Thus, for every $j\in\{0,1,\dots,N\}$, 
\begin{equation}\label{63421ef}
\norm{P_j}_{L^1(S^{n-1},{\mathcal{H}}^{n-1})}
\leq c_n\norm{P_j}_{L^2(S^{n-1},{\mathcal{H}}^{n-1})} 
\leq c_n\norm{P}_{L^2(S^{n-1},{\mathcal{H}}^{n-1})}.
\end{equation} 
The upshot of \eqref{Pdecomposition} is that we may now express the action of the operator ${\mathbb{T}}$ 
on any function $f\in{\mathscr{C}}^{\omega}(\partial\Omega)$ as 
\begin{equation}\label{trDCa-765}
\mathbb{T}f(x)=\sum_{j=0}^{N}\int_{\partial\Omega}\frac{P_j(x-y)}{|x-y|^{n-1+(\ell-2j)}}f(y)\,d\sigma(y),
\qquad\forall\,x\in\Omega.
\end{equation} 
and an estimate like \eqref{inductiongoal} is valid for each integral operator in the above sum since 
each polynomial $P_j$ is odd, homogeneous, and harmonic. From this and \eqref{63421ef} we then conclude 
that, in the current general case, for each function $f\in{\mathscr{C}}^{\omega}(\partial\Omega)$ we have
\begin{equation}\label{y5r4df3}
\sup_{x\in\Omega}|({\mathbb{T}}f)(x)|
+\sup_{x\in\Omega}\frac{|\nabla({\mathbb{T}}f)(x)|}{W_{\widetilde{\omega}}(\rho(x))} 
\leq c_n\ell C^\ell 2^{\ell^2}\norm{P}_{L^2(S^{n-1},{\mathcal{H}}^{n-1})} 
\norm{f}_{{\mathscr{C}}^{\omega}(\partial\Omega)}.
\end{equation} 
By choosing again $C$ big enough to begin with, matters may be arranged so that 
$c_n\ell\leq C^\ell$ for $\ell\geq 1$. Assume this is the case and rename $C^2$ as $C$. 
From \eqref{y5r4df3} and Lemma~\ref{lemma:fokogtr} we then conclude that 
the operator ${\mathbb{T}}_{+}$ from \eqref{defT} maps $\mathscr{C}^{\omega}(\partial\Omega)$ 
continuously into $\mathscr{C}^{\omega}(\Omega_{+})$, and that the corresponding estimate claimed 
in \eqref{mainthmeq1} holds. Finally, $\Omega_{-}$ is also a $\mathscr{C}^{1,\omega}$-domain with 
compact boundary, so the same argument applies for the operator ${\mathbb{T}}_{-}$.

\vskip 0.08in
\noindent\underline{{\it Proof of} $(f)\Rightarrow(e)$}. This is a direct consequence 
of the observation that the operators ${\mathscr{R}}_j^{\pm}$ considered in item $(e)$ 
are particular cases of those considered in item $(f)$.

\vskip 0.08in
\noindent\underline{{\it Proof of} $(e)\Rightarrow(a)$}. Since $\Omega$ is a {\rm UR} domain, 
the jump-formulas from Theorem~\ref{Clpthm} hold. In view of \eqref{76g-h5Fa-ee5ds} this implies 
that for each $f\in L^1(\partial\Omega,\sigma)$, each $j\in\{1,\dots,n\}$, and $\sigma$-a.e. 
$x\in\partial\Omega$, we have
\begin{equation}\label{y5rdwW}
\left({\mathscr{R}}_j^{\pm}f\big|_{\partial\Omega_{\pm}}^{{}^{\kappa-{\rm n.t.}}}\right)(x) 
=\mp\frac{1}{2}\nu_j(x)f(x)+\lim_{\varepsilon\to 0^{+}}\int_{\partial\Omega\setminus B(x,\varepsilon)}
(\partial_j E)(x-y)f(y)\,d\sigma(y),
\end{equation} 
where $E$ is the fundamental solution for the Laplacian defined in \eqref{laplacianfundamentalsol}. 
From this, \eqref{mainthmRinCw}, Lemma~\ref{holderclosure}, and \eqref{tr-iTGav} we then see that 
for $j\in\{1,\dots,n\}$ we have
\begin{equation}\label{6543ee-U}
\nu_j={\mathscr{R}}_{j}^{-}1\big|_{\partial\Omega_{-}} 
-{\mathscr{R}}_{j}^{+}1\big|_{\partial\Omega_{+}}\in{\mathscr{C}}^{\omega}(\partial\Omega),
\end{equation} 
where the first equality holds $\sigma$-a.e. on $\partial\Omega$. 
Thus, $\nu\in{\mathscr{C}}^{\omega}(\partial\Omega)$ which ultimately goes to 
show that $\Omega$ is a ${\mathscr{C}}^{1,\omega}$-domain.

\vskip 0.08in
\noindent\underline{\it Proof of $(a)\Rightarrow (d)$, and of estimate \eqref{mainthmeq2}}. 
Suppose $\Omega\subseteq{\mathbb{R}}^n$ is a ${\mathscr{C}}^{1,\omega}$-domain 
with compact boundary and fix an odd homogenous polynomial $P$ of degree $\ell\geq 1$ in ${\mathbb{R}}^n$. 
Under these hypotheses, we have already proved (in the implication $(a)\Rightarrow(f)$) that the integral 
operators associated with $P$ as in \eqref{defT} map $\mathscr{C}^{\omega}(\partial\Omega)$ continuously 
into $\mathscr{C}^{\omega}(\Omega_{\pm})$ and that the estimates in \eqref{mainthmeq1} hold. 
Assume first that $P$ is harmonic and define 
\begin{equation}\label{trewW}
k(x):=\frac{P(x)}{|x|^{n-1+\ell}},\qquad\forall\,x\in{\mathbb{R}}^n\setminus\{0\}.
\end{equation} 
From \cite[p.\,73]{Ste70} we know that its Fourier transform is given by 
\begin{equation}\label{fourierk}
{\widehat{k}}(\xi)=\gamma_{n,\ell}\frac{P(\xi)}{|\xi|^{\ell+1}},\qquad\forall\,\xi\in{\mathbb{R}}^n\setminus\{0\},
\end{equation}
where $\gamma_{n,\ell}=O(\ell^{-(n-2)/2})$ if $n$ is even, and $\gamma_{n,\ell}=O(\ell^{-(n-4)/2})$ if 
$n$ is odd as $\ell\to\infty$.

Fix two arbitrary distinct points $x,y\in\partial\Omega$. In the case when $|\nu(x)-\nu(y)|\geq 1/2$ we have
$\omega(|x-y|)\geq(2\norm{\nu}_{{\mathscr{C}}^{\omega}(\partial\Omega)})^{-1}$, hence
\begin{align}\label{proofmainthmeq9}
\frac{|P(\nu(x))-P(\nu(y))|}{\omega(|x-y|)} &\leq 4\norm{\nu}_{\mathscr{C}^{\omega}(\partial\Omega)}
\norm{P}_{L^{\infty}(S^{n-1},{\mathcal{H}}^{n-1})} 
\nonumber\\[4pt] 
&\leq c_n 2^\ell\norm{\nu}_{{\mathscr{C}}^{\omega}(\partial\Omega)}
\norm{P}_{L^{1}(S^{n-1},{\mathcal{H}}^{n-1})},
\end{align} 
where the last inequality above is a consequence of \eqref{boundpol}. Consider next the case when $|\nu(x)-\nu(y)|\leq 1/2$.
In such a scenario, the line segment $[\nu(x),\nu(y)]$ is contained in the annulus $\overline{B(0,1)}\setminus B(0,1/2)$.
Based on this, the Mean Value Theorem, the homogeneity of $P$, and \eqref{boundpol}, we obtain 
\begin{align}\label{proofmainthmeq10}
\frac{|P(\nu(x))-P(\nu(y))|}{\omega(|x-y|)} &\leq\left(\sup_{z\in[\nu(x),\nu(y)]}|(\nabla P)(z)|\right) 
\norm{\nu}_{{\mathscr{C}}^{\omega}(\partial\Omega)}
\nonumber\\[4pt] 
&\leq\norm{\nabla P}_{L^{\infty}(S^{n-1},{\mathcal{H}}^{n-1})} 
\norm{\nu}_{{\mathscr{C}}^{\omega}(\partial\Omega)}
\nonumber\\[4pt] 
&\leq c_n 2^\ell\norm{P}_{L^{1}(S^{n-1},{\mathcal{H}}^{n-1})}
\norm{\nu}_{{\mathscr{C}}^{\omega}(\partial\Omega)}.
\end{align} 
Moreover, using \eqref{boundpol} and the fact that 
$\norm{\nu}_{{\mathscr{C}}^{\omega}(\partial\Omega)}\geq\sup_{\partial\Omega}|\nu|=1$, 
\begin{align}\label{proofmainthmeq11}
\sup_{x\in\partial\Omega}|P(\nu(x))| &\leq\norm{P}_{L^{\infty}(S^{n-1},{\mathcal{H}}^{n-1})}
\leq c_n 2^\ell\norm{P}_{L^{1}(S^{n-1},{\mathcal{H}}^{n-1})}
\nonumber\\[4pt]
&\leq c_n 2^\ell\norm{P}_{L^{1}(S^{n-1},{\mathcal{H}}^{n-1})} 
\norm{\nu}_{{\mathscr{C}}^{\omega}(\partial\Omega)}.
\end{align}
From \eqref{proofmainthmeq9}, \eqref{proofmainthmeq10}, \eqref{proofmainthmeq11}, and the 
fact that $\widehat{k}(\nu(x))=\gamma_{n,\ell}P(\nu(x))$ for each $x\in\partial\Omega$ 
(as seen from \eqref{fourierk}), we can conclude that the mapping 
$\partial\Omega\ni x\mapsto{\widehat{k}}(\nu(x))\in{\mathbb{C}}$ belongs to ${\mathscr{C}}^{\omega}(\partial\Omega)$ 
and 
\begin{equation}\label{6532ss43df}
\|{\widehat{k}}(\nu(\cdot))\|_{{\mathscr{C}}^{\omega}(\partial\Omega)} 
=\gamma_{n,\ell}\norm{P(\nu(\cdot))}_{{\mathscr{C}}^{\omega}(\partial\Omega)} 
\leq c_n 2^\ell\norm{\nu}_{{\mathscr{C}}^{\omega}(\partial\Omega)}\norm{P}_{L^1(S^{n-1},{\mathcal{H}}^{n-1})},
\end{equation} 
where the last inequality is based on the decay properties of $\gamma_{n,\ell}$ in the parameter $\ell$. 
Given that $\Omega$ is a {\rm UR} domain (cf. Remark~\ref{YrafV.a}), Theorem~\ref{Clpthm} is applicable. 
Fix an aperture parameter $\kappa>0$. Based on the jump-formula \eqref{jumplp}, \eqref{mainthmeq1}, 
Lemma~\ref{holderclosure}, \eqref{tr-iTGav}, \eqref{6532ss43df}, and \eqref{tr-iTGav.cdF} we may 
then estimate 
\begin{align}\label{rEEE.15}
\norm{Tf}_{{\mathscr{C}}^{\omega}(\partial\Omega)} 
&\leq\norm{\tfrac{1}{2i}{\widehat{k}}(\nu(\cdot))f+Tf}_{{\mathscr{C}}^{\omega}(\partial\Omega)} 
+\norm{\tfrac{1}{2i}{\widehat{k}}(\nu(\cdot))f}_{{\mathscr{C}}^{\omega}(\partial\Omega)}
\nonumber\\[4pt] 
&\leq\Big\|{\mathbb{T}f}\big|_{\partial\Omega}^{{}^{\kappa-{\rm n.t.}}}\Big\|_{{\mathscr{C}}^{\omega}(\partial\Omega)} 
+\tfrac{1}{2}\|{\widehat{k}}(\nu(\cdot))\|_{{\mathscr{C}}^{\omega}(\partial\Omega)} 
\norm{f}_{{\mathscr{C}}^{\omega}(\partial\Omega)}
\nonumber\\[4pt] 
&\leq\norm{{\mathbb{T}}f}_{{\mathscr{C}}^{\omega}(\overline{\Omega})} 
+c_n 2^\ell\norm{\nu}_{{\mathscr{C}}^{\omega}(\partial\Omega)}\norm{P}_{L^1(S^{n-1},{\mathcal{H}}^{n-1})} 
\norm{f}_{{\mathscr{C}}^{\omega}(\partial\Omega)}
\nonumber\\[4pt] 
&\leq\norm{{\mathbb{T}}f}_{{\mathscr{C}}^{\omega}(\overline{\Omega})} 
+c_n 2^\ell\norm{\nu}_{{\mathscr{C}}^{\omega}(\partial\Omega)}\norm{P}_{L^2(S^{n-1},{\mathcal{H}}^{n-1})} 
\norm{f}_{{\mathscr{C}}^{\omega}(\partial\Omega)}
\nonumber\\[4pt] 
&\leq\left(C^\ell 2^{\ell^2}+c_n 2^\ell\norm{\nu}_{{\mathscr{C}}^{\omega}(\partial\Omega)}\right) 
\norm{P}_{L^2(S^{n-1},{\mathcal{H}}^{n-1})}\norm{f}_{{\mathscr{C}}^{\omega}(\partial\Omega)}
\nonumber\\[4pt] 
&\leq(2C)^\ell 2^{\ell^2}\norm{P}_{L^2(S^{n-1},{\mathcal{H}}^{n-1})}\norm{f}_{{\mathscr{C}}^{\omega}(\partial\Omega)},
\end{align} 
assuming, without loss of generality, that 
$C\geq\max\big\{1\,,\,c_n\norm{\nu}_{{\mathscr{C}}^{\omega}(\partial\Omega)}\big\}$. 
This ultimately proves that the singular integral operator $T$ associated with the polynomial $P$ as in \eqref{trdcc-765}
maps $\mathscr{C}^{\omega}(\partial\Omega)$ boundedly into itself in the case when $P$ is harmonic.

In the case when the polynomial $P$ is not necessarily harmonic, we decompose $P$ as in  \eqref{Pdecomposition} 
and for each $f\in{\mathscr{C}}^{\omega}(\partial\Omega)$ write 
\begin{equation}\label{555.Jdd}
Tf(x)=\sum_{j=0}^{N}\lim_{\varepsilon\to 0^{+}}\int\limits_{\substack{y\in\partial\Omega\\ |x-y|>\varepsilon}}
\frac{P_j(x-y)}{|x-y|^{n-1+(\ell-2j)}}f(y)\,d\sigma(y)\,\,\text{ for $\sigma$-a.e. }\,\,x\in\partial\Omega.
\end{equation} 
Each term in the above sum may be regarded as the action of an integral operator on the function $f$, 
of the sort described in \eqref{trdcc-765}, though now associated with a harmonic homogeneous odd polynomial. 
As such, what we have proved up to this point applies and, on account of \eqref{63421ef}, gives that 
\begin{equation}\label{yrdd-kjt4}
\norm{Tf}_{{\mathscr{C}}^{\omega}(\partial\Omega)}\leq\ell(2C)^\ell 2^{\ell^2}
\norm{P}_{L^2(S^{n-1},{\mathcal{H}}^{n-1})}\norm{f}_{{\mathscr{C}}^{\omega}(\partial\Omega)}.
\end{equation} 
By choosing again $C$ big enough so that $\ell(2C)^\ell\leq C^\ell(2C)^\ell$ for $\ell\geq 1$ and renaming 
$2C^2$ as $C$, the claim in item $(d)$ and the estimate in \eqref{mainthmeq2} follow.

\vskip 0.08in
\noindent\underline{{\it Proof of} $(d)\Rightarrow (c)$}. This is a direct consequence of the observation that the 
operators $R_j$ considered in item $(c)$ are particular cases of those considered in item $(d)$.

\vskip 0.08in
\noindent\underline{{\it Proof of} $(c)\Rightarrow (b)$}. 
Work under the assumption that $\Omega$ is a {\rm UR} domain and fix an arbitrary $j\in\{1,\dots,n\}$. Observe that 
$1\in{\mathscr{C}}^{\omega}(\partial\Omega)\subseteq L^2(\partial\Omega,\sigma)$, since 
$\partial\Omega$ is assumed to be bounded. We claim that the functional 
$R_j 1\in\big({\mathscr{C}}^{\omega}(\partial\Omega)\big)^{*}$, originally defined as in 
\eqref{Rjdistributional1}, is given by the principal-value integral \eqref{Rieszpv} with $f:=1$. 
Once this claim is justified, item $(b)$ becomes a direct consequence of item $(c)$.

To see why the claim is true, pick an arbitrary 
$g\in{\mathscr{C}}^{\omega}(\partial\Omega)$, write \eqref{Rjdistributional1} for this $g$ and with $f=1$, 
then apply Lebesgue's Dominated Convergence Theorem and Fubini's Theorem, to obtain
\begin{align}\label{Rjdis-yr}
\langle R_j 1,g\rangle &=\frac{1}{2\varpi_{n-1}}\int_{\partial\Omega}
\int_{\partial\Omega}\frac{x_j-y_j}{|x-y|^n}\big(g(x)-g(y)\big)\,d\sigma(y)\,d\sigma(x)
\nonumber\\[4pt]
&=\lim_{\varepsilon\to 0^{+}}\frac{1}{2\varpi_{n-1}}\hskip 0.20in\int\hskip -0.35in
\int\limits_{\substack{(x,y)\in\partial\Omega\times\partial\Omega\\ 
|x-y|>\varepsilon}}\frac{x_j-y_j}{|x-y|^n}\big(g(x)-g(y)\big)\,d\sigma(y)\,d\sigma(x)
\nonumber\\[4pt]
&=\lim_{\varepsilon\to 0^{+}}\int_{\partial\Omega}\Bigg(
\frac{1}{\varpi_{n-1}}
\int\limits_{\substack{y\in\partial\Omega\\ |x-y|>\varepsilon}}\frac{x_j-y_j}{|x-y|^n}\,d\sigma(y)\Bigg)g(x)\,d\sigma(x).
\end{align}
There remains to show that 
\begin{align}\label{Rjdis-yr.BB}
\lim_{\varepsilon\to 0^{+}}\int_{\partial\Omega} &\Bigg(
\frac{1}{\varpi_{n-1}}
\int\limits_{\substack{y\in\partial\Omega\\ |x-y|>\varepsilon}}\frac{x_j-y_j}{|x-y|^n}\,d\sigma(y)\Bigg)g(x)\,d\sigma(x)
\nonumber\\[4pt]
&=\int_{\partial\Omega}\lim_{\varepsilon\to 0^{+}}
\Bigg(\frac{1}{\varpi_{n-1}}
\int\limits_{\substack{y\in\partial\Omega\\ |x-y|>\varepsilon}}\frac{x_j-y_j}{|x-y|^n}\,d\sigma(y)\Bigg)g(x)\,d\sigma(x)
\nonumber\\[4pt]
&=\Bigg\langle\lim_{\varepsilon\to 0^{+}}\frac{1}{\varpi_{n-1}}
\int\limits_{\substack{y\in\partial\Omega\\ |\cdot-y|>\varepsilon}}\frac{(\cdot-y)_j}{|\cdot-y|^n}\,d\sigma(y)
\,,\,g\Bigg\rangle.
\end{align}
The first equality in \eqref{Rjdis-yr.BB} is a consequence of Lebesgue Dominated Convergence Theorem since the existence of the 
respective pointwise limit is guaranteed by the existence of the principal value Riesz transform acting on $1$, while the 
desired uniform bound is provided by the fact that the maximal operator $R_j^{\rm max}$, acting on 
each $h\in L^2(\partial\Omega,\sigma)$ by
\begin{equation}\label{oigfdcvb}
\big(R_j^{\rm max}h\big)(x):=\sup_{\varepsilon>0}
\Bigg|\int\limits_{\substack{y\in\partial\Omega\\ |x-y|>\varepsilon}}\frac{x_j-y_j}{|x-y|^n}h(y)\,d\sigma(y)\Bigg|\,\,
\text{ for each }\,x\in\partial\Omega,
\end{equation}
is bounded from $L^2(\partial\Omega,\sigma)$ into $L^2(\partial\Omega,\sigma)$ (cf. e.g., \cite[Theorem~5.10.3, pp.\,458-460]{GHA.I}).
Finally, the second equality in \eqref{Rjdis-yr.BB} is immediate from definitions. This completes the proof of $(c)\Rightarrow (b)$.

\vskip 0.08in
\noindent\underline{{\it Proof of} $(b)\Rightarrow (a)$}. Assume $R_j 1\in{\mathscr{C}}^{\omega}(\partial\Omega)$ 
for every $j\in\{1,\dots,n\}$. Since we trivially have ${\mathscr{C}}^{\omega}(\partial\Omega)\subset 
L^{\infty}(\partial\Omega,\sigma)\subset\BMO(\partial\Omega,\sigma)$, from the discussion in 
Section~\ref{sec:intro} it follows that each distributional Riesz transform $R_j$ extends to a 
bounded linear operator on $L^2(\partial\Omega,\sigma)$, given by \eqref{Rieszpv}. Also, \eqref{Mabb88} guarantees that
$\partial\Omega$ is a {\rm UR} set. Granted this, we may invoke \cite[Proposition~5.1, p.\,986]{MitMitVer16}
or \cite[Propositions~2.5.29 and 2.5.32, pp.\,462--467]{GHA.III}
to conclude that the operator $\mathbf{C}$ is well-defined and bounded on $L^2(\partial\Omega,\sigma)\otimes{\mathcal{C}}\!\ell_n$, and 
\begin{equation}\label{identityClp}
{\mathbf{C}}^2 =\tfrac{1}{4}\,I\,\,\text{ on }\,\,L^2(\partial\Omega,\sigma)\otimes{\mathcal{C}}\!\ell_n.
\end{equation}
Then at $\sigma$-a.e. point on $\partial\Omega$ we may write 
\begin{equation}\label{proofmainthmeq7}
\frac{1}{4}\nu={\mathbf{C}}({\mathbf{C}}\nu)=-{\mathbf{C}}\Big(\sum_{j=1}^n(R_j 1)e_j\Big).
\end{equation}
The first equality above uses \eqref{identityClp}. The second equality in \eqref{proofmainthmeq7} uses the 
fact that at $\sigma$-a.e. point on $\partial\Omega$ we have ${\mathbf{C}}\nu=-\sum_{j=1}^n(R_j 1)e_j$, 
itself a consequence of the definition of ${\mathbf{C}}$ plus the fact that $\nu\odot\nu=-1$ and 
$x-y=\sum_{j=1}^n(x_j-y_j)e_j$ for each $x,y\in{\mathbb{R}}^n$. Thanks to \eqref{mainthmeq3} and 
Theorem~\ref{theor:ijiji}, the function in the right hand-side of \eqref{proofmainthmeq7} belongs to 
${\mathscr{C}}^{\omega}(\partial\Omega)\otimes{\mathcal{C}}\!\ell_n$, so ultimately 
$\nu\in{\mathscr{C}}^{\omega}(\partial\Omega)$ (hence $\Omega$ is a ${\mathscr{C}}^{1,\omega}$-domain, 
by Remark~\ref{YrafV.b}). 

This concludes the proof of Theorem~\ref{theor:uewmp}.
\end{proof}

\noindent{\bf Acknowledgments.}  
The second author acknowledges partial financial support from \hfill\\
MCIN/AEI/10.13039/501100011033 through grants CEX2023-001347-S and 
PID2022-141354NBI00. The third author has been supported in part by the Simons Foundation grant $\#\,$2505191.
The fourth author has been supported in part by the Simons Foundation grant $\#\,$6374811.



\begin{thebibliography}{10}
%
\bibitem{ABMMZ} R.\,Alvarado, D.\,Brigham, V.\,Maz'ya, M.\,Mitrea, and E.\,Ziad\'e, {\it On the regularity 
of domains satisfying a uniform hour-glass condition and a sharp version of the Hopf-Oleinik Boundary Point 
Principle}, Journal of Mathematical Sciences, 176 (2011), no.~3, 281--360.
%
\bibitem{BaSa1965} A.A.\,Babaev and V.V.\,Salaev, {\it On an analogue of the Plemelj-Privalov theorem in the case of 
nonsmooth curves, and its applications}, Dokl. Akad. Nauk SSSR, 161 (1965), 267--269.
%
\bibitem{BaSt1956} N.K.\,Bari and S.B.\,Stechkin, {\it Best approximations and differentiability properties of two conjugate 
functions}, Trudy Moskow Mat. Obshch., 5 (1956), 483--522.
%
\bibitem{BenSha88} C.\,Bennett and R.\,Sharpley, {\it Interpolation of Operators}, Pure and Applied
Mathematics, Vol.~129, Academic Press Inc., Boston, MA, 1988.
%
\bibitem{CMM22} M.\,Cao, J.J.\,Mar\'in, and J.M.\,Martell, {\it Extrapolation on function and modular spaces, and applications}, 
Adv. Math., 406 (2022), Paper No. 108520, 87 pp.
%
\bibitem{Casey2024} E.\,Casey, {\it Quantitative control on the Carleson $\varepsilon$-function determines regularity}, 
arXiv:2410.18422v2 [math.CA] 29 Nov. 2024.
%
\bibitem{Chr90} M.\,Christ, {\it Lectures on Singular Integral Operators}, CBMS Regional
Conference Series in Mathematics, Vol.~77, American Mathematical Society, Providence, RI, 1990.
%
\bibitem{CUMP} D.\,Cruz-Uribe, J.M.\,Martell, and C.\,P\'erez, {\it Weights, extrapolation and the theory of Rubio de Francia},
Oper. Theory Adv. Appl., 215 Birkhäuser/Springer Basel AG, Basel, 2011.
%
\bibitem{DavSem93} G.\,David and S.\,Semmes, {\it Analysis of and on Uniformly Rectifiable Sets},
Mathematical Surveys and Monographs, Vol.\,38, American Mathematical Society, Providence, RI, 1993.
%
\bibitem{Dav1949} N.A.\,Davydov, {\it Continuity of an integral of Cauchy type on a closed domain}, 
Dokl. Akad. Nauk SSSR, 64 (1949), 759-762. 
%
\bibitem{Dyn1979} E.M.\,Dyn'kin, {\it Smoothness of Cauchy type}, Zap. Nauchn. Sem. Leningrad, Otdel. Mat., Inst. Steklov, 
92 (1979), 115--133.

\bibitem{EvaGar92} L.\,Evans and R.\,Gariepy, {\it Measure Theory and Fine Properties of Functions}, 
Studies in Advanced Mathematics, CRC Press, Boca Raton, FL, 1992.
%
\bibitem{Gus1992} E.G.\,Guseinov, {\it The Plemelj-Privalov theorem for generalized H\"older classes}, 
Russian Acad. Sci. Sb. Math., 75 (1993), 165--182.
%
\bibitem{HilPhi57} E.\,Hille and R.S. Phillips, {\it Functional analysis and semi-groups}, third printing of the revised edition of 1957, 
American Mathematical Society Colloquium Publications, Vol. XXXI, Amer. Math. Soc., Providence, RI, 1974.
%
\bibitem{HofMitTay07} S.\,Hofmann, M.\,Mitrea, and M.\,Taylor, {\it Geometric and transformational 
properties of Lipschitz domains, Semmes-Kenig-Toro domains, and other classes of finite perimeter 
domains}, J. Geom. Anal., 17 (2007), no.~4, 593--647.
%
\bibitem{HofMitTay10} S.\,Hofmann, M.\,Mitrea, and M.\,Taylor, {\it Singular integrals and elliptic 
boundary problems on regular Semmes-Kenig-Toro domains}, Int. Math. Res. Not. IMRN (2010), no.~14,
2567--2865.
%
\bibitem{Magda1947} L.G.\,Magnaradze, {\it On a generalization of the Plemelj-Privalov theorem}, 
Soobshch. Akad. Nauk Gruzin. SSR, 8 (1947), 509--516.
%
\bibitem{Mas} A.\,Mas, {\it Variation for singular integrals on Lipschitz graphs: 
$L^p$ and end-point estimates}, Trans. Amer. Math. Soc., 365 (2011), no.~11, 5759--5781.
%
\bibitem{MaSa78} O.\,Martio and J.\,Sarvas, {\it Injectivity theorems in plane
and space}, Ann. Acad. Sci. Fenn. Ser. A I Math. 4, (1978--79), 383--401.
%
\bibitem{Mattila} P.\,Mattila, {\it Geometry of Sets and Measures in Euclidean 
Spaces: Fractals and Rectifiability}, Cambridge University Press, Vol.\,44, London, 1995.
%
\bibitem{GHA.I} D.\,Mitrea, I.\,Mitrea, and M.\,Mitrea, {\it Geometric Harmonic Analysis\,-\,Volume~I: A Sharp Divergence Theorem with
Nontangential Pointwise Traces}, Developments in Mathematics, Vol.\,72, Springer, 2022.
%
\bibitem{GHA.II} D.\,Mitrea, I.\,Mitrea, M.\,Mitrea, 
{\it Geometric Harmonic Analysis\,-\,Volume~II: Function Spaces Measuring Size and Smoothness on Rough Sets}, 
Developments in Mathematics, Vol.\,73, Springer, 2022.
%
\bibitem{GHA.III} D.\,Mitrea, I.\,Mitrea, and M.\,Mitrea, {\it Geometric Harmonic Analysis\,-\,Volume~III: Integral Representations,
Calder\'on-Zygmund Theory, Fatou Theorems, and Applications to Scattering}, Developments in Mathematics, Vol.\,74, Springer, 2023.
%
\bibitem{GHA.IV} D.\,Mitrea, I.\,Mitrea, and M.\,Mitrea, {\it Geometric Harmonic 
Analysis\,-\,Volume~IV: Boundary Layer Potentials in Uniformly Rectifiable Domains, and 
Applications to Complex Analysis}, Developments in Mathematics, Vol.\,75, Springer, 2023.
%
\bibitem{GHA.V} D.\,Mitrea, I.\,Mitrea, M.\,Mitrea, 
{\it Geometric Harmonic Analysis\,-\,Volume~V: Fredholm Theory and Finer Estimates for Integral Operators, 
with Applications to Boundary Problems}, Developments in Mathematics, Vol.\,76, Springer, 2023.
%
\bibitem{MitMitVer16} D.\,Mitrea, M.\,Mitrea, and J.\,Verdera, {\it Characterizing regularity of
domains via the Riesz transforms on their boundaries}, Anal. PDE, 9 (2016), no.~4, 955--1018.
%
\bibitem{Mit94} M.\,Mitrea, {\it Clifford Wavelets, Singular Integrals, and Hardy Spaces},
Lecture Notes in Mathematics, Vol.~1575, Springer-Verlag, Berlin, 1994.
%
\bibitem{Mush1953} N.I.\,Muskhelishvili, {\it Singular Integral Equations}, 2nd edition, Fizmatgiz, Moscow, 1962; 
English transl. of 1st edition Noordhoph Groningen, 1953.
%
\bibitem{NazTolVol14} F.\,Nazarov, X.\,Tolsa, and A.\,Volberg, {\it On the uniform rectifiability of
AD-regular measures with bounded Riesz transform operator: the case of codimension 1}, Acta Math., 
213 (2014), no.~2, 237--321.
%
\bibitem{Plemelj1908} J.\,Plemelj, {\it Ein Erg\"anzungssatz zur Cauchyschen Integraldarstellung 
analytischer Funktionen, Randwerte betreffend}, Monatshefte f\"ur Mathematik und Physik, 19 (1908), 205--210. 
%
\bibitem{Privalov1916} I.\,Privalov, {\it Sur les fonctions conjugu\'ees}, Bull. Soc. Math., 44 (1916), 100--103. 
%
\bibitem{Privalov1939} I.\,Privalov, {\it Sur les int\'egrales du type de Cauchy}, C.R. (Dokl.) Acad. Sci. URSS, 230 (1939), 859--863. 
%
\bibitem{Salaev1976} V.V.\,Salaev, {\it Direct and inverse estimates for a singular Cauchy integral over a closed curve}, 
Mat. Zametki, 19 (1976), 365--380.
%
\bibitem{Ste70} E.M.\,Stein, {\it Singular Integrals and Differentiability Properties of Functions}, 
Princeton Mathematical Series, No.~30, Princeton University Press, Princeton, N.J., 1970.
%
\bibitem{Zyg1923} A.\,Zygmund, {\it Sur le module de continuit\'e de la somme de la s\'erie conjugu\'ee de la s\'eries de Fourier}, 
Prace Mat.-Fiz., 33 (1923), 125--132.
\end{thebibliography}
\end{document}